\documentclass[12pt,reqno]{amsart}

\usepackage{amsmath}
\usepackage{amssymb}
\usepackage{amsthm}
\usepackage{amsfonts}
\usepackage{mathrsfs}
\usepackage{graphicx}
\usepackage{xcolor}
\usepackage{float}
\usepackage{placeins}
\usepackage{subcaption}
\usepackage{enumitem}
\usepackage{hyperref}

\newtheorem{theorem}{Theorem}[section]
\newtheorem{lemma}[theorem]{Lemma}
\newtheorem{proposition}[theorem]{Proposition}

\newtheorem{example}{Example}[section]
\newtheorem{remark}{Remark}[section]
\newtheorem{definition}{Definition}[section]

\begin{document}

\title[Hausdorff Dimension of Self-Affine Sets]{Hausdorff Dimension of a Class of Self-Affine Sets}

\author{Amal P. S.}
\address{APJ Abdul Kalam Technological University, Thiruvananthapuram, Kerala, India; Rajagiri School of Engineering and Technology, Kochi, Kerala, India}
\email{amalb9111@gmail.com}
\thanks{Corresponding author: Amal P. S.}

\author{Vinod Kumar P. B.}
\address{APJ Abdul Kalam Technological University, Thiruvananthapuram, Kerala, India; Muthoot Institute of Technology and Science, Kochi, Kerala, India}
\email{vinodkumarpb@mgits.ac.in}

\author{Ramkumar P. B.}
\address{APJ Abdul Kalam Technological University, Thiruvananthapuram, Kerala, India; Rajagiri School of Engineering and Technology, Kochi, Kerala, India}
\email{ramkumar\_pb@rajagiritech.edu.in}

\subjclass[2020]{28A80, 37C45}
\keywords{Self-affine sets, Iterated Function Systems, Hausdorff dimension, Open set condition}

\begin{abstract}
In this paper, exact Hausdorff dimension formulas for a class of self-affine attractors generated by affine Iterated Function Systems are derived. We consider systems containing an affine map whose $n$-th iterate is a similarity contraction, alongside standard similarities whose linear parts commute with the symmetric operator $A^\top A$, where $A$ is the linear part of the affine map. We prove that the attractor of such a system exists uniquely, and, under the Open Set Condition, we compute its exact Hausdorff dimension. We extend this framework to systems where all map compositions of some fixed length are similarities, and to systems where overlaps are exact homothetic copies of the attractor. We unify these approaches to establish dimension formulas for hybrid systems that combine multiple eventually contractive affine maps with universally aligned similarities. Finally, we conclude with a topological classification of these systems in the plane. For a two-map system comprising an affine map whose second iterate is a similarity with contraction ratio $c$, alongside an $f$-aligned similarity with ratio $r$, we prove that the precise parameter balance $c + r = 1$ acts as a strict topological bottleneck uniquely guaranteeing both the open set condition and the connectedness of the attractor.
\end{abstract}

\maketitle

\section*{Introduction}

The mathematical study of fractal geometry was rigorously formalized by Hutchinson in 1981. In his seminal work \cite{hutchinson1981}, Hutchinson introduced the framework of Iterated Function Systems (IFS) and proved that for any finite set of contraction mappings on a complete metric space, there exists a unique compact invariant set, known as the attractor. This framework was further developed and popularized by Barnsley \cite{barnsley1985, barnsley1986, barnsley2014}, whose Collage Theorem provided the inverse algorithm for approximating natural images with fractals, cementing the IFS as a central tool in both theoretical and applied geometry.

For systems consisting of similarity transformations (often referred to as self-similar systems), the dimension theory is well-developed. If the system satisfies a non-overlapping condition known as the \textit{Open Set Condition} (OSC), the Hausdorff dimension of the attractor is the unique solution $s$ to the Moran equation:
\begin{equation} \label{eq:moran}
\sum_{i=1}^m r_i^s = 1,
\end{equation}
where $r_i$ are the contraction ratios of the maps \cite{moran1946}. The Open Set Condition, crucial for the validity of this formula, requires the existence of a non-empty open set $U$ such that $\bigcup_{i} f_i(U) \subset U$ and $f_i(U) \cap f_j(U) = \emptyset$ for $i \neq j$. Schief \cite{schief1994} later strengthened this result by showing that the OSC is essentially necessary for the Hausdorff dimension to coincide with the similarity dimension defined by \eqref{eq:moran}.

In sharp contrast, the analysis of self-affine sets—where the generating maps contract space non-uniformly—presents fundamental difficulties. Unlike the self-similar case, there is no universal scalar equation like \eqref{eq:moran} that determines the dimension for all configurations. The standard theory, established by Falconer \cite{falconer1988}, provides a method to estimate the dimension using singular value functions (the \textit{affinity dimension}). However, this approach generally yields an upper bound rather than an exact formula for specific, explicitly defined systems.

Since Falconer's foundational work, considerable effort has been dedicated to determining precisely when the Hausdorff dimension coincides with the affinity dimension. Hueter and Lalley \cite{hueter1993} provided some of the earliest sufficient geometric conditions for the validity of Falconer's formula for totally disconnected self-affine sets in $\mathbb{R}^2$. More recently, breakthroughs have been achieved by analyzing the ergodic theory of self-affine measures and their projections. Bárány, Hochman, and Rapaport \cite{barany2017} proved that for planar self-affine sets satisfying the strong open set condition, the Hausdorff dimension equals the affinity dimension under mild irreducibility and non-compactness assumptions. Morris and Shmerkin \cite{morris2016} established similar dimension equalities by showing that the affinity dimension can be approximated by the Lyapunov dimension of self-affine measures on positive subsystems. The role of the associated Furstenberg measures and projection properties has been further elucidated by Falconer and Kempton \cite{falconer2016}, while Bárány, Käenmäki, and Koivusalo \cite{barany2018} established that, given fixed translation vectors, the Hausdorff dimension equals the affinity dimension for almost all choices of matrices, relying on Ledrappier-Young theory.

The scope of the theory has also been expanded to address systems that do not satisfy strict separation conditions. Jordan, Pollicott, and Simon \cite{jordan2006} utilized random translational perturbations to compute the dimension of self-affine attractors, bypassing traditional norm restrictions. For systems with overlaps, Bárány, Rams, and Simon \cite{barany2015} calculated dimensions for diagonally affine planar iterated function systems, and Hochman and Rapaport \cite{hochman2019} computed the dimension for planar self-affine sets with overlaps assuming exponential separation and total irreducibility. Furthermore, generalized approaches utilizing positive linear operators and extensions of the Krein-Rutman theorem have been employed by Nussbaum, Priyadarshi, and Verduyn Lunel \cite{nussbaum2012} to characterize the dimension of invariant sets generated by infinitesimal similitudes.

Concurrent with these developments in dimension theory, the classical Iterated Function System framework has undergone various structural generalizations. Among these, the strict requirement that every generating map must be a strict contraction has been relaxed in several ways. One such approach is the recent introduction of $G^n$-contractions and $G$-Iterated Function Systems ($G$-IFS) \cite{amal2026}, which extended the classical theory to systems where the generating functions become strictly contractive only under higher-order iteration. While this prior work primarily established the existence and topological properties of $G$-attractors, it also observed that the Hausdorff dimension of certain specific $G$-attractors might satisfy an algebraic relation analogous to the classical Moran equation. This observation serves as the foundational premise for the exact dimension formulas we derive in the present paper.

Building upon this premise, we identify a class of affine systems that admit precise, simple dimension formulas. We show that when the affine maps satisfy specific algebraic conditions—namely, \textit{$f$-aligned similarity} or \textit{$k$-iterate similarity}—the dimension calculation simplifies dramatically. This allows us to find exact values without the ambiguity of generic estimates.

The paper is organized as follows. In Section 1, we introduce the concept of $f$-aligned similarity, defined by the commutation of similarity maps with the symmetric matrix of a general affine map, and prove that this condition allows for an exact dimension calculation. In Section 2, we provide illustrative examples, explicitly verifying these conditions for specific systems. In Section 3, we present an example to demonstrate the importance of the alignment condition, showing that the dimension formula fails when the similarity maps do not satisfy the alignment condition. In Section 4, we generalize our results to \textit{$k$-iterate similarity systems}, analyzing affine maps that become similarities under composition. In Section 5, we unify these concepts by establishing a dimension formula for \textit{hybrid systems} that combine eventually-contractive affine maps with universally aligned similarity contractions. Finally, in Section 6, we provide a complete algebraic and topological classification for planar systems generated by exactly two maps, establishing the exact parameter constraints required for the open set condition and global connectedness to hold.

\section{f-aligned similarity}

The study of self-affine sets is often complicated by the general nature of affine transformations that contract or stretch space differently in different directions. However, certain affine systems possess a hidden structural regularity that allows the attractor to be decomposed into countably many self-similar copies of itself. We illustrate this phenomenon with the following explicit example.

\begin{example} \label{ex:direct_calculation}
Consider the affine IFS on $\mathbb{R}^2$ generated by the maps:
\begin{align*}
    f(x, y) &= \left( -\frac{y}{3}, \, x \right), \\
    g(x, y) &= \left( \frac{2}{3}x - 1, \, \frac{2}{3}y \right).
\end{align*}
The map $g$ is a similarity with contraction ratio $r_g = 2/3$. The map $f$ is not a strict contraction in the Euclidean metric, as it preserves distances for vectors aligned with the $x$-axis. However, the system is eventually contractive. Specifically, the second iterate $f^2(x,y) = (-x/3, -y/3)$ is a strict contraction with ratio $1/3$. Since every composition of length 2 (i.e., $f^2, fg, gf, g^2$) is a strict contraction, the existence of a unique non-empty compact attractor $A$ is guaranteed.

We now verify that this system satisfies the Open Set Condition (OSC). Consider the open square region $U = (-3, 1) \times (-3, 1)$.

The image of $U$ under $f$ is:
\[
    f(U) = \left\{ \left(-\frac{y}{3}, x\right) : -3 < x < 1, -3 < y < 1 \right\} = \left(-\frac{1}{3}, 1\right) \times (-3, 1).
\]
The image of $U$ under $g$ is:
\begin{align*}
    g(U) &= \left\{ \left(\frac{2x}{3} - 1, \frac{2y}{3}\right) : -3 < x < 1, -3 < y < 1 \right\} \\
         &= \left(-3, -\frac{1}{3}\right) \times \left(-2, \frac{2}{3}\right).
\end{align*}
It is evident that $f(U) \subset U$ and $g(U) \subset U$. Furthermore, the projections of $f(U)$ and $g(U)$ onto the first coordinate axis are the disjoint intervals $(-1/3, 1)$ and $(-3, -1/3)$ respectively. Thus, $f(U) \cap g(U) = \emptyset$, confirming that the OSC holds.

The attractor $A$ satisfies the decomposition (see Figure \ref{fig:infinite_decomposition}):
\begin{equation} \label{eq:decomp_union}
    A = g(A) \cup \bigcup_{k=0}^{\infty} h_k(A),
\end{equation}
where $h_k = f \circ g^k \circ f$. Explicit calculation shows that $h_k$ is a similarity with ratio $r_k = \frac{1}{3} (\frac{2}{3})^k$. The disjointness of $f(U)$ and $g(U)$ ensures that the components in \eqref{eq:decomp_union} are $\mathcal{H}^s$-almost disjoint.

\begin{figure}[htbp]
    \centering
    \includegraphics[width=0.75\textwidth]{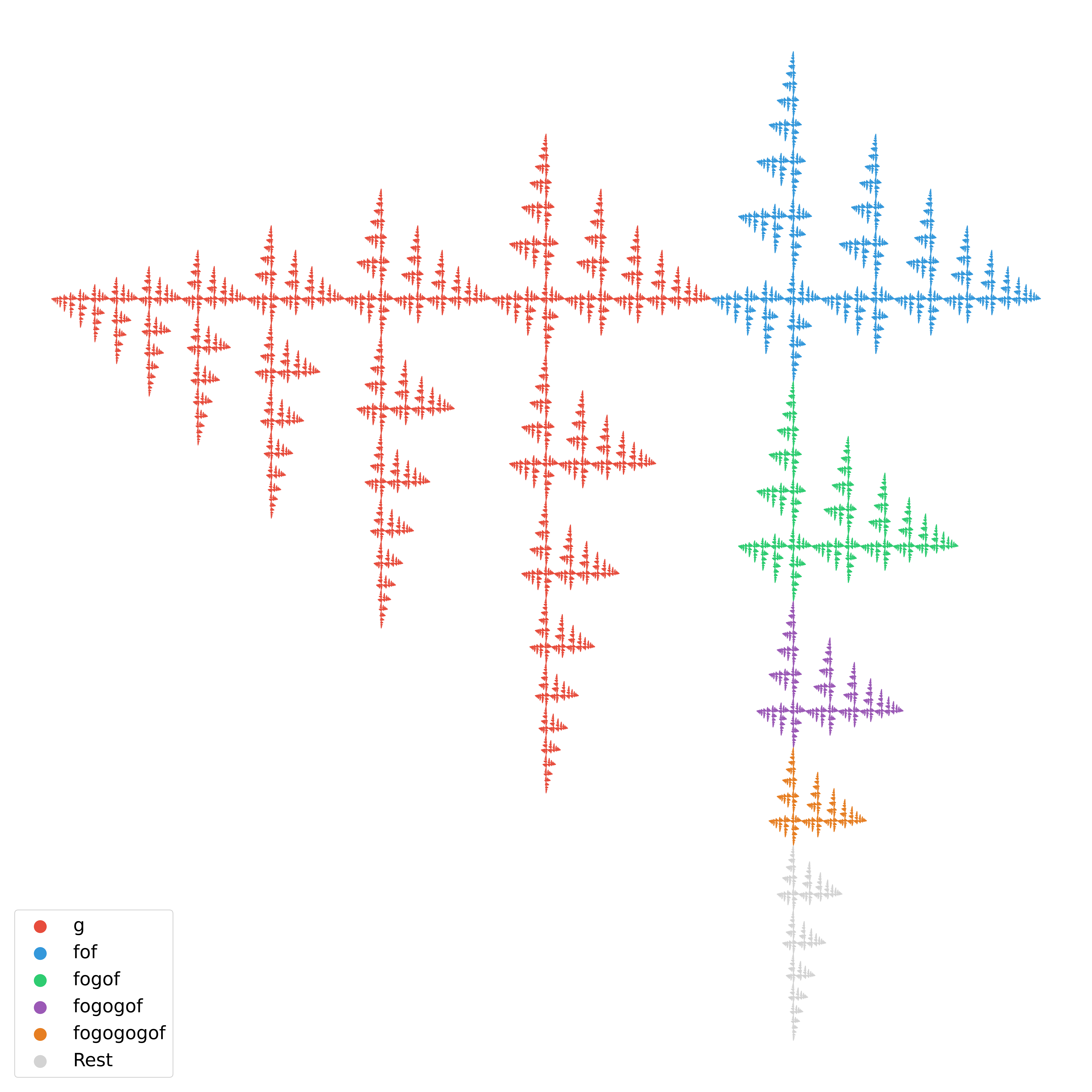}
    \caption{The attractor $A$ decomposed into self-similar subsets. The large red region is the image $g(A)$. The remaining sequence of smaller regions corresponds to the images under the maps $f \circ g^k \circ f$ for $k \ge 0$.}
    \label{fig:infinite_decomposition}
\end{figure}

Let $s = \dim_{\mathrm{H}}(A)$. Applying the $s$-dimensional Hausdorff measure $\mathcal{H}^s$ to \eqref{eq:decomp_union} and using the scaling property $\mathcal{H}^s(T(E)) = c^s \mathcal{H}^s(E)$ for similarities, we obtain:
\[
    \mathcal{H}^s(A) = \left(\frac{2}{3}\right)^s \mathcal{H}^s(A) + \sum_{k=0}^{\infty} \left[ \frac{1}{3} \left(\frac{2}{3}\right)^k \right]^s \mathcal{H}^s(A).
\]
Assuming for the moment that $0 < \mathcal{H}^s(A) < \infty$ (which we will prove later), dividing by $\mathcal{H}^s(A)$ yields:
\[
    1 = \left(\frac{2}{3}\right)^s + \left(\frac{1}{3}\right)^s \sum_{k=0}^{\infty} \left[ \left(\frac{2}{3}\right)^s \right]^k.
\]
Summing the geometric series (where $(2/3)^s < 1$) gives:
\[
    1 = \left(\frac{2}{3}\right)^s + \frac{(1/3)^s}{1 - (2/3)^s}.
\]
Rearranging terms, we find:
\[
    1 - \left(\frac{2}{3}\right)^s = \frac{(1/3)^s}{1 - (2/3)^s} \implies \left[ 1 - \left(\frac{2}{3}\right)^s \right]^2 = \left(\frac{1}{3}\right)^s.
\]
Taking the square root leads to the characteristic equation:
\[
    \left(\frac{2}{3}\right)^s + \left(\frac{1}{\sqrt{3}}\right)^s = 1.
\]
Numerical approximation yields the dimension $s \approx 1.464$.
\end{example}

The preceding example exhibits two structural regularities that extend beyond standard self-similar geometry. First, the map $f$, while not a similarity itself, becomes one upon iteration (specifically, $f^2$ is a similarity). Second, the interaction between $f$ and $g$ allows the stretching of $f$ to align with the scaling of $g$. We now formalize these properties by introducing the concepts of \textit{$G^n$-similarity contractions} and \textit{$f$-aligned similarities}.

\begin{definition}
Let $f: \mathbb{R}^m \to \mathbb{R}^m$ be an affine map and let $n$ be a positive integer. The map $f$ is called a $G^n$-similarity contraction if the $n$-th iterate $f^n$ (the composition of $f$ with itself $n$ times) is a similarity contraction. Explicitly, there exists a constant $c \in (0, 1)$ such that:
\begin{equation}
    \| f^n(x) - f^n(y) \| = c \| x - y \| \quad \text{for all } x, y \in \mathbb{R}^m.
\end{equation}
If in addition there is no integer $k$, $1 \le k < n$, such that $f^k$ is a similarity contraction, then $f$ is called a strict $G^n$-similarity contraction.
\end{definition}

\begin{definition}\label{def:f_aligned}
Let $f : \mathbb{R}^m \to \mathbb{R}^m$ be an affine transformation with linear part $A$.
A similarity transformation $g : \mathbb{R}^m \to \mathbb{R}^m$ with linear part $S$ is called $f$-aligned if
\begin{equation}
    S (A^\top A) = (A^\top A) S.
\end{equation}
Equivalently, $S$ commutes with the symmetric matrix $A^\top A$, which determines the stretching induced by $f$.
Geometrically, this means that the action of $g$ preserves the eigenspaces of $A^\top A$, ensuring that its scaling and rotational components are aligned with the principal axes of $f$.
\end{definition}

\begin{remark}
    The geometric significance of the $f$-alignment condition is that it ensures the sets $f(g(A))$ and $g(f(A))$ are geometrically congruent (up to translation) for any set $A \subset \mathbb{R}^m$. Without this condition, the order of composition matters: applying a non-aligned rotation before $f$ can result in a set with different geometric properties than applying it after. The $f$-alignment guarantees that the stretching remains consistent, regardless of the composition order.
\end{remark}

\begin{theorem}\label{thm:existence_uniqueness_aligned}
Let $f: \mathbb{R}^m \to \mathbb{R}^m$ be a $G^n$-similarity contraction with contraction ratio $c$ (i.e., $f^n$ is a similarity with ratio $c < 1$). Let $\{g_1, \dots, g_p\}$ be a set of $f$-aligned similarity contractions on $\mathbb{R}^m$ with ratios $\{c_1, \dots, c_p\}$. Then the Iterated Function System (IFS) $\mathcal{F} = \{f, g_1, \dots, g_p\}$ admits a unique attractor.
\end{theorem}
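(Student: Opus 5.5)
The plan is to construct a norm on $\mathbb{R}^m$, equivalent to the Euclidean one, in which $f$ and every $g_i$ are strict contractions. Hutchinson's theorem \cite{hutchinson1981} then applies directly: it gives a contraction of the complete Hausdorff metric space of nonempty compact sets, and hence a unique attractor. Write $A$ for the linear part of $f$ and $S_i = c_i O_i$ for the linear part of $g_i$, with $O_i$ orthogonal. Since $A^n = cQ$ with $Q$ orthogonal and $c>0$, the matrix $A$ is invertible. Define
\[
\|x\|_* = \sum_{j=0}^{n-1} c^{-j/n}\,|A^j x| .
\]
Shifting the index shows $\|Ax\|_* = c^{1/n}\|x\|_*$. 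The top term is $c^{-(n-1)/n}|A^n x| = c^{1/n}|x|$, which becomes the new $j=0$ term. Hence $f$ is a contraction for $\|\cdot\|_*$ with ratio $c^{1/n}<1$.

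Next I would show each $g_i$ contracts $\|\cdot\|_*$. It suffices to prove $|A^j S_i x| = c_i |A^j x|$ for $0\le j\le n-1$, since this gives $\|S_i x\|_* = c_i\|x\|_*$.

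The case $j=1$ comes from $f$-alignment. Put $P=A^\top A$. Because $O_i$ commutes with $P$, so does $O_i^\top = O_i^{-1}$, and therefore $O_i$ commutes with $P^{1/2}$. Then
\[
|AS_ix| = c_i|P^{1/2}O_ix| = c_i|O_iP^{1/2}x| = c_i|Ax| .
\]
It follows that $T_i := AS_iA^{-1}$ satisfies $|T_i y| = c_i|y|$, so $T_i$ is a similarity of ratio $c_i$, and $AS_i = T_iA$.

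The main obstacle is iterating this to higher $j$. We have $A^jS_i = A^{j-1}T_iA$, so we need $T_i$, and then its successors $A^{k}S_iA^{-k}$, to be $f$-aligned too. This is immediate for $n=2$, because only $j=0,1$ occur. For general $n$ I would try to exploit $A^n = cQ$. The conjugate $A^nS_iA^{-n} = QS_iQ^\top$ is a similarity, so the sequence $S_i, T_i, \dots$ closes up after $n$ steps. I would then try to show, via the polar decompositions $A^k = U_k(P_k)^{1/2}$, that each intermediate conjugate is again a similarity. If this fails in general, it is exactly where the hypothesis must be strengthened: require that $g_i$ be aligned with every $f^k$, $k<n$, or note that alignment with $f$ suffices when $n=2$. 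I would flag this explicitly.

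A cruder route, bounding $\|A^k\|$ by $M$ and counting the blocks of $f$'s in long words, does not work. Words like $(fg)^N$ only give the bound $(Mc_i)^N$, which need not tend to $0$, so genuine use of alignment is unavoidable.

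Once all maps contract $\|\cdot\|_*$, the Hutchinson operator $F(K)=f(K)\cup\bigcup_i g_i(K)$ is a contraction on the space of nonempty compact sets with the Hausdorff metric induced by $\|\cdot\|_*$. That space is complete, and compactness does not depend on the choice of equivalent norm. Banach's fixed point theorem then gives a unique nonempty compact $K$ with $F(K)=K$, which is the claimed unique attractor.
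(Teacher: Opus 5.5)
Your adapted norm $\|x\|_*=\sum_{j<n}c^{-j/n}|A^jx|$ and your $j=1$ alignment computation are correct. So your argument is a complete proof when $n=2$. For general $n$ it is a complete proof whenever $|A^jS_ix|=c_i|A^jx|$ for all $j<n$, i.e.\ whenever each $g_i$ is $f^j$-aligned for every $1\le j\le n-1$.

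The step you flagged for $2\le j\le n-1$ is a genuine gap, and it cannot be closed, because the statement is false for $n=3$. Take $0<\mu,r<1$, $\sigma_1\neq\sigma_2$ with $\sigma_1\sigma_2=\mu^2$, and an angle $\phi$ with $(\sigma_1+\sigma_2)\cos\phi=-\mu$. Put $A=R_\phi\,\mathrm{diag}(\sigma_1,\sigma_2)$, $J=\mathrm{diag}(1,-1)$, $f(x)=Ax$, and $g(x)=rJx+b$ with $b\neq 0$.
\begin{itemize}
\item Since $\operatorname{tr}A=-\mu$ and $\det A=\mu^2$, Cayley--Hamilton gives $A^3=\mu^3 I$, so $f$ is a $G^3$-similarity contraction.
\item Since $A^\top A=\mathrm{diag}(\sigma_1^2,\sigma_2^2)$ commutes with $J$, $g$ is $f$-aligned.
\item The linear part of $f\circ g$ is $rAJ$, with determinant $-r^2\mu^2$ and trace $r\mu(\sigma_2-\sigma_1)/(\sigma_1+\sigma_2)$. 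For $\mu=r=0.9$, $\sigma_1=9$, $\sigma_2=0.09$, its eigenvalues are about $0.505$ and $-1.299$.
\end{itemize}
Hence $(f\circ g)^N$ expands an eigendirection, and arbitrarily long words are not contractions. In fact no nonempty compact $K=f(K)\cup g(K)$ exists. Such a $K$ would have to lie on the contracting eigenline of $f\circ g$. If $K$ had two points, that line would be $f$-invariant, which is impossible because $A$ has no real eigenvector. A one-point $K$ would force $z_f=z_g$, contradicting $b\neq 0$. In your terms: $AJA^{-1}$ is a similarity, but $A^2JA^{-2}=A^{-1}JA$ is not, since $J$ does not commute with $AA^\top$. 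This example is exactly the $(fg)^N$ failure mode you anticipated.

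The paper's proof has the same hole and does not flag it. It passes from $\|f(g_k(v))\|=c_{i_k}\|f(v)\|$ to $\mathrm{Lip}(w)=(\prod_k c_{i_k})\,\mathrm{Lip}(f^q)$ ``regardless of position.'' Already for $w=f\circ f\circ g\circ f$ this requires $|A^2Sz|=r|A^2z|$.

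That factorization is legitimate for $n=2$, where each $A^j$ is either a similarity or a similarity times $A$. In that case the paper's route and yours rest on the same identity. The paper shows Lipschitz constants of long words tend to $0$ and then applies Hutchinson to $\mathcal{F}^L$. Your adapted norm is the cleaner packaging: it gives a one-step contraction with explicit ratio $\max(c^{1/n},c_1,\dots,c_p)$.

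You should therefore state your result for $n=2$, which covers every example in the paper, or under the strengthened alignment hypothesis. Record the example above to show that alignment with $f$ alone does not suffice once $n\ge 3$.
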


\begin{proof}
We analyze the contraction factor of an arbitrary composite map $w$ of length $L$ generated by the IFS. Let $q$ be the number of occurrences of $f$ in $w$, and let $\{g_{i_1}, \dots, g_{i_{L-q}}\}$ be the subsequence of similarity maps appearing in $w$.

The $f$-alignment condition implies that for any vector $v \in \mathbb{R}^m$, we have:
\[ \| f(g_k(v)) \| = c_{i_k} \| f(v) \|. \]
This allows us to factor the scalar contraction ratios of the $g$-maps out of the composition norm, regardless of their position in the sequence:
\[ \text{Lip}(w) = \left( \prod_{k=1}^{L-q} c_{i_k} \right) \text{Lip}(f^q). \]

We now bound $\text{Lip}(f^q)$. Writing $q = k n + j$ with $0 \le j < n$, we have $f^q = f^j \circ (f^n)^k$. Since $f^n$ is a similarity with ratio $c$, $(f^n)^k$ has ratio $c^k$. Letting $M = \max_{0 \le j < n} \text{Lip}(f^j)$, we obtain:
\[ \text{Lip}(f^q) \le M c^{\lfloor q/n \rfloor}. \]

Substituting this back into the expression for $\text{Lip}(w)$, and letting $r_{\max} = \max\{c_1, \dots, c_p\} $:
\[ \text{Lip}(w) \le M (r_{\max})^{L-q} c^{\lfloor q/n \rfloor}. \]

Since $r_{\max} < 1$ and $c < 1$, the term $(r_{\max})^{L-q} c^{\lfloor q/n \rfloor}$ approaches 0 as $L \to \infty$ (at least one exponent diverges). Thus, for sufficiently large $L$, every map in $\mathcal{F}^L$, the set of all composition functions of length $L$ generated by $\mathcal{F}$, is a strict contraction. By Hutchinson's Theorem applied to $\mathcal{F}^L$, there exists a unique compact attractor.
\end{proof}

Having established the existence and uniqueness of the attractor, we now turn our attention to determining its fractal dimension.

\begin{lemma} \label{lem:hutchinson_counting}
Let $\{V_i\}$ be a collection of disjoint open subsets of $\mathbb{R}^n$. Let $U$ be a ball of radius $r$. Suppose there exist constants $0 < c_1 \le c_2 < \infty$ such that every set $V_i$ contains a ball of radius $c_1 r$ and is contained in a ball of radius $c_2 r$.

Then the number of closures $\overline{V_i}$ that intersect $U$ is bounded by a constant $M$ dependent only on the dimension $n$ and the ratio $c_2/c_1$:
\[ M \le (1 + 2c_2)^n c_1^{-n}. \]
\begin{flushright}
(Hutchinson \cite{hutchinson1981}, Lemma 5.3)
\end{flushright}
\end{lemma}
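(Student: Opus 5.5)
The plan is the standard volume-comparison argument. First we may assume $U$ is centered at some point $x$ and has radius $r$. Suppose $\overline{V_i}$ meets $U$. Since $V_i$ is contained in a ball of radius $c_2 r$, its diameter is at most $2c_2 r$. Hence every point of $\overline{V_i}$ lies within distance $r + 2c_2 r$ of $x$. It follows that $\overline{V_i}$, and in particular the inner ball $B_i \subset V_i$ of radius $c_1 r$, is contained in the enlarged ball $B(x,(1+2c_2)r)$.

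Next we use disjointness. The sets $V_i$ are pairwise disjoint, so the inner balls $B_i$ are pairwise disjoint. If $N$ closures meet $U$, the corresponding $N$ inner balls all sit disjointly inside $B(x,(1+2c_2)r)$. Comparing $n$-dimensional Lebesgue measures, with $\omega_n$ the volume of the unit ball, gives
\[
N\,\omega_n (c_1 r)^n \le \omega_n \bigl((1+2c_2) r\bigr)^n .
\]
Cancelling $\omega_n r^n$ yields $N \le (1+2c_2)^n c_1^{-n}$. This bound is independent of $r$ and of the particular collection, so we may take $M$ to be this number.

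The only point needing care is the dependence claimed in the statement. The bound obtained depends on $n$, $c_1$ and $c_2$, not literally only on $c_2/c_1$. However, in the stated inequality this is exactly the bound that is asserted, so we prove that inequality as written. Measurability and finiteness pose no difficulty: the argument shows that any finite subfamily of intersecting sets has at most $M$ members, and hence the whole intersecting family is finite with at most $M$ members. No step is a serious obstacle. The one geometric fact to state explicitly is the containment $\overline{V_i}\subset B(x,(1+2c_2)r)$, which follows from the triangle inequality applied to a point of $\overline{V_i}\cap \overline{U}$.
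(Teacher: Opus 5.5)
Your argument is correct and is exactly the standard volume-comparison proof of Hutchinson's Lemma 5.3, which the paper cites without reproducing. Each closure meeting $U$ lies in the concentric ball of radius $(1+2c_2)r$, and the disjoint inner balls of radius $c_1 r$ then force $N \le (1+2c_2)^n c_1^{-n}$. Your caveat about the dependence is also right: taking $c_1=c_2=\varepsilon\to 0$ (disjoint balls of radius $\varepsilon r$) shows the count cannot be bounded in terms of $n$ and $c_2/c_1$ alone, so the valid content is the displayed inequality, which is all the paper uses.
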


\begin{proposition}[Mass Distribution Principle] \label{prop:mass_distribution}
Let $\mu$ be a mass distribution (a finite Borel measure) on a set $A$ with $0 < \mu(A) < \infty$. If there exist constants $C > 0$ and $\delta > 0$ such that $\mu(U) \le C (\text{diam}(U))^s$ for all sets $U$ with diameter $\text{diam}(U) \le \delta$, then $\mathcal{H}^s(A) \ge \mu(A)/C$.
\begin{flushright}
(Falconer \cite{falconer_book}, Principle 4.2)
\end{flushright}
\end{proposition}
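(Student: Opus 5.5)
The statement immediately above is the Mass Distribution Principle (Proposition~\ref{prop:mass_distribution}). The plan is to compare an arbitrary fine cover of $A$ with the measure $\mu$ through countable subadditivity, and then pass from $\mathcal{H}^s_\delta$ to $\mathcal{H}^s$.

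Fix $0<\varepsilon\le\delta$ and let $\{U_i\}$ be any countable cover of $A$ with $\mathrm{diam}(U_i)\le\varepsilon$. We may assume each $U_i$ is Borel, in fact closed. Replacing $U_i$ by its closure does not change the diameter, and $\mu$ is Borel, so $\mu(U_i)$ makes sense.

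Since $\mu$ is a mass distribution on $A$, it is concentrated on $A$, so $\mu(A)\le\mu\big(\bigcup_i U_i\big)$. Countable subadditivity then gives
\[
0<\mu(A)\le \sum_i \mu(U_i)\le C\sum_i (\mathrm{diam}\,U_i)^s ,
\]
where the last step uses the hypothesis, which applies because $\mathrm{diam}(U_i)\le\varepsilon\le\delta$.

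Taking the infimum over all such covers yields $\mathcal{H}^s_\varepsilon(A)\ge \mu(A)/C$. Letting $\varepsilon\to0$, and using that $\mathcal{H}^s_\varepsilon(A)$ increases to $\mathcal{H}^s(A)$, we obtain $\mathcal{H}^s(A)\ge\mu(A)/C$.

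The only delicate point is the measurability step. The Hausdorff measure is defined using arbitrary covering sets, while $\mu$ is only defined on Borel sets. Passing to closures handles this, because the diameter is unchanged and the sum can only get smaller under the infimum. One should also note that $\mu(A)>0$ is not needed for the inequality itself; it is what makes the conclusion $\mathcal{H}^s(A)>0$ meaningful in later applications.
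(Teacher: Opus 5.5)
Your argument is correct and is essentially the standard proof of Falconer's Principle 4.2; the paper gives no proof of its own and simply cites that reference. The closure step is a sound way to handle the fact that $\mu$ is only defined on Borel sets, and $\mu(A) \le \mu\bigl(\bigcup_i \overline{U_i}\bigr)$ follows from monotonicity alone, so you do not need to appeal to $\mu$ being concentrated on $A$.
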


\begin{theorem} \label{thm:general_dimension}
Let $f: \mathbb{R}^m \to \mathbb{R}^m$ be a $G^n$-similarity contraction such that the $n$-th iterate $f^n$ is a similarity with ratio $c \in (0, 1)$. Let $g_1, \dots, g_k$ be $k$ distinct $f$-aligned similarity contractions on $\mathbb{R}^m$ with ratios $r_1, \dots, r_k$, respectively.

Assuming the Open Set Condition (OSC) holds, the Hausdorff dimension $s$ of the attractor $A$ of the IFS $\mathcal{F} = \{f, g_1, \dots, g_k\}$ is the unique real number satisfying:
\begin{equation} \label{eq:general_formula}
    c^{s/n} + \sum_{i=1}^k r_i^s = 1.
\end{equation}
Moreover, the $s$-dimensional Hausdorff measure satisfies $0 < \mathcal{H}^s(A) < \infty$.
\end{theorem}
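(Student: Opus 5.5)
The plan is to treat the IFS as a ``quasi-self-similar'' system in which every composite map is a similarity up to a uniformly bounded distortion. Then I would run the classical Hutchinson--Moran argument with the weights $p_f = c^{s/n}$ and $p_i = r_i^s$.

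\textbf{Step 1 (uniqueness of $s$).} The function $\Phi(t) = c^{t/n} + \sum_i r_i^t$ is continuous and strictly decreasing. It satisfies $\Phi(0) = k+1 > 1$ and $\Phi(t) \to 0$ as $t \to \infty$. Hence there is a unique $s$ with $\Phi(s) = 1$.

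\textbf{Step 2 (bounded distortion).} For a finite word $w$ over $\{f, g_1, \dots, g_k\}$ containing $q$ copies of $f$ and the $g$-indices $i_1, \dots, i_{L-q}$, define
\[ \rho(w) = c^{q/n} \prod_{\ell} r_{i_\ell}. \]
This quantity is multiplicative under concatenation. I want a constant $K \ge 1$, independent of $w$, such that the linear part $L_w$ of $w$ satisfies
\[ K^{-1}\rho(w)\|v\| \le \|L_w v\| \le K\rho(w)\|v\| \quad \text{for all } v. \]
The alignment gives $\|ASv\|^2 = v^\top S^\top S A^\top A v = r^2\|Av\|^2$, since $S = rO$ with $O$ orthogonal and $S$ commutes with $A^\top A$. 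Following the factorization used in the proof of Theorem~\ref{thm:existence_uniqueness_aligned}, this lets me pull the scalars $r_{i_\ell}$ out, leaving $\|A^q O' v\|$ with $O'$ orthogonal. Writing $q = jn + t$ with $0 \le t < n$, the map $A^q = A^t (A^n)^j$ has all singular values in $[c^{j}\sigma_{\min}(A^t),\, c^{j}\sigma_{\max}(A^t)]$. Here $A$ is invertible because $A^n$ is a similarity. So one may take
\[ K = \max_{0 \le t < n} \max\bigl(\sigma_{\max}(A^t)\,c^{-t/n},\ \sigma_{\min}(A^t)^{-1}c^{t/n}\bigr). \]

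This step is where I expect the main obstacle. The alignment hypothesis is stated only for $A^\top A$, whereas commuting $S$ past later factors of $A$ needs something like commutation with $(A^t)^\top A^t$. I would first try to show directly that the two-sided estimate holds by induction on word length, using only $\|ASv\| = r\|Av\|$ together with the bounded condition numbers of $A^t$. If that fails, I would rely on the identity $\mathrm{Lip}(w) = (\prod c_{i_\ell})\,\mathrm{Lip}(f^q)$ as established earlier, together with its analogue for the co-Lipschitz constant.

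\textbf{Step 3 (upper bound).} Fix $\delta > 0$ and take the stopping set $W_\delta$ of minimal words $w$ with $\rho(w) \le \delta$. Since $\Phi(s) = 1$ and $\rho$ is multiplicative, $\sum_{w \in W_\delta}\rho(w)^s = 1$. The cover $\{w(A)\}_{w \in W_\delta}$ has diameters at most $K\rho(w)\,\mathrm{diam}A \le K\delta\,\mathrm{diam}A$. This gives $\mathcal{H}^s(A) \le (K\,\mathrm{diam}A)^s < \infty$.

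\textbf{Step 4 (lower bound).} Let $\mu$ be the image under the coding map of the Bernoulli measure with weights $(c^{s/n}, r_1^s, \dots, r_k^s)$, so that $\mu(w(A)) \le \rho(w)^s$. Let $U$ be the OSC open set; it contains a ball of radius $a_1$ and lies in a ball of radius $a_2$. For a set $E$ of diameter $r < \delta_0$, consider $W_r$. The sets $w(U)$, $w \in W_r$, are pairwise disjoint by OSC. Since $\rho(w) \in (r_{\min} r, r]$ with $r_{\min} = \min(c^{1/n}, r_i)$, Step 2 shows that each $w(U)$ contains a ball of radius $K^{-1}a_1 r_{\min} r$ and lies in a ball of radius $K a_2 r$. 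Lemma~\ref{lem:hutchinson_counting} then bounds by some $M$, independent of $r$, the number of $\overline{w(U)} \supseteq w(A)$ meeting $E$. Hence $\mu(E) \le M r^s = M\,\mathrm{diam}(E)^s$. The Mass Distribution Principle (Proposition~\ref{prop:mass_distribution}) yields $\mathcal{H}^s(A) \ge 1/M > 0$, so $\dim_{\mathrm H} A = s$.
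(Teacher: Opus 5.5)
\textbf{The gap is Step 2, and neither of your fallbacks closes it.} Your Steps 1, 3 and 4 follow the paper's route: weights $c^{1/n}, r_i$, cylinder covers whose $s$-sums equal $1$, and a Bernoulli mass controlled by Hutchinson's counting lemma. They are sound once a two-sided bound $K^{-1}\rho(w)\|v\| \le \|L_w v\| \le K\rho(w)\|v\|$ is available.

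Your second fallback does not help. The identity $\mathrm{Lip}(w) = (\prod c_{i_\ell})\,\mathrm{Lip}(f^q)$ in the proof of Theorem~\ref{thm:existence_uniqueness_aligned} is only asserted. So is the claim in the paper's proof that $\mathrm{diam}(h_\omega(A))$ is invariant under permuting the letters of $\omega$. The relation $\|ASv\| = r\|Av\|$ lets you move $S$ past one factor of $A$. Moving it past $A^t$ needs exactly what you suspected: $S$ must commute with $(A^t)^\top A^t$.

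For $n \ge 3$ this does not follow from $f$-alignment, and the bound is false, so your first fallback (induction) cannot work either. Take
\[ M=\begin{pmatrix} -1/2 & -\sqrt{3}/4 \\ \sqrt{3} & -1/2 \end{pmatrix}, \qquad A=c^{1/3}M. \]
Then $\operatorname{tr}M = -1$, $\det M = 1$, $M^3 = I$ and $A^3 = cI$, while $M^\top M$ is not diagonal. Let $S = rO$, where $O = \left(\begin{smallmatrix}\cos 2\phi & \sin 2\phi\\ \sin 2\phi & -\cos 2\phi\end{smallmatrix}\right)$ is the reflection in an eigenvector of $M^\top M$. Then $g$ is $f$-aligned, and $\sin 2\phi \neq 0$.

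For $w_j = (f\circ f\circ g)^j$ you get $L_{w_j} = (c^{2/3}r)^j (M^{-1}O)^j$ and $\rho(w_j) = (c^{2/3}r)^j$. The matrix $M^{-1}O$ has determinant $-1$ and trace $-\tfrac{3\sqrt3}{4}\sin 2\phi \neq 0$. Hence it has a real eigenvalue of modulus greater than $1$ (about $1.31$).

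Consequently $\|L_{w_j}\|/\rho(w_j) \to \infty$ and no constant $K$ exists. Your Step 3 diameter bound fails, and so do the inner and outer ball radii in Step 4. For $c = r = 0.9$ the word $f\circ f\circ g$ even has an eigenvalue of modulus greater than $1$. So the eventual contractivity asserted in Theorem~\ref{thm:existence_uniqueness_aligned} fails as well.

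\textbf{What is true, and what your proof needs.} Let $\mathcal{O}$ be the set of orthogonal matrices commuting with $(A^t)^\top A^t$ for $1 \le t \le n-1$. This is equivalent to commuting for all $t \ge 1$, since $(A^{jn+t})^\top A^{jn+t} = c^{2j}(A^t)^\top A^t$. For $O \in \mathcal{O}$ and every $t \ge 0$,
\[ (AOA^{-1})^\top (A^t)^\top A^t (AOA^{-1}) = A^{-\top}O^\top (A^{t+1})^\top A^{t+1} O A^{-1} = (A^t)^\top A^t . \]
The case $t = 0$ gives orthogonality, so $AOA^{-1} \in \mathcal{O}$.

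Hence if every orthogonal part $O_i$ of $S_i = r_i O_i$ lies in $\mathcal{O}$, all orthogonal factors can be pushed to the left. You obtain $L_w = (\prod_\ell r_{i_\ell})\,\tilde O A^q$ with $\tilde O$ orthogonal. This gives the paper's identity and exactly your constant $K$.

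For $n = 2$ the extra condition beyond $f$-alignment is vacuous, because $(A^2)^\top A^2 = c^2 I$. So your proof, with this lemma inserted, is complete for $n \le 2$, which covers all of the paper's examples. For $n \ge 3$ the argument, yours as well as the paper's, does not prove the statement under the stated hypotheses; a stronger alignment assumption is required.
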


\begin{proof}
First, we prove that $\mathcal{H}^s(A) < \infty$. Let $s$ be the unique solution to Equation \eqref{eq:general_formula}. We assign a ``formal weight'' $w(\psi)$ to each function $\psi \in \{f, g_1, \dots, g_k\}$ as follows:
\[
    w(f) = c^{1/n}, \quad \text{and} \quad w(g_i) = r_i \quad \text{for } i=1,\dots,k.
\]
By the definition of $s$, these weights satisfy the normalization condition:
\begin{equation} \label{eq:sum_unity_final}
    \sum_{\psi \in \mathcal{F}} w(\psi)^s = 1.
\end{equation}

Let $\mathcal{I}^p$ be the set of all finite words of length $p$ formed by  the maps in $\mathcal{F}$. For any word $\omega = (\omega_1, \dots, \omega_p) \in \mathcal{I}^p$, we define the composite map $h_\omega = \psi_{\omega_1} \circ \dots \circ \psi_{\omega_p}$. The set $h_\omega(A)$ represents the image of the attractor under this map. The collection $\{ h_\omega(A) : \omega \in \mathcal{I}^p \}$ forms a cover of $A$.

Let $p$ be large enough so that it can be decomposed as $p = n p_1 + p_2$, where $p_1, p_2$ are positive integers and $p_2$ satisfies $n-1 \le p_2 \le 2(n-1)$.

Consider an arbitrary word $\omega \in \mathcal{I}^p$. Since the maps $g_i$ are $f$-aligned similarity contractions, the  diameter of the image set is invariant under permutations of the functions in the composition $h_\omega$. If $\pi(\omega)$ is a permutation of $\omega$, then:
\[ \text{diam}(h_\omega(A)) = \text{diam}(h_{\pi(\omega)}(A)).\]
We rearrange the sequence of functions composing $h_\omega$ into two components: a ``head'' block $h^{\text{head}}$ of length $n p_1$ and a ``tail'' block $h^{\text{tail}}$ of length $p_2$.
Since the tail length $p_2 \ge n-1$, we can always partition the total count of $f$'s in $\omega$ such that the head block contains exactly a multiple of $n$ instances of $f$ (say, $q n$ times), and the tail block contains the remaining instances.

We define the \textit{Total Formal Weight} of the word $\omega$ as $W(\omega) = \prod_{j=1}^p w(\omega_j)$.
We now bound the diameter $\text{diam}(h_\omega(A))$ in terms of $W(\omega)$.

First, consider the head estimate. The head contains exactly $q n$ instances of $f$. Since $f^{q n}$ is a similarity with ratio $c^q = (c^{1/n})^{q n}$, the geometric contraction of the head matches the product of its formal weights exactly. For any bounded set $B$:
\[ \text{diam}(h^{\text{head}}(B)) = \left( \prod_{\psi \in \text{head}} w(\psi) \right) \text{diam}(B). \]

Next, consider the tail estimate. The tail has length $p_2$. We define a uniform distortion constant $K$ over the set of all possible tails $\mathcal{T} = \bigcup_{j=n-1}^{2(n-1)} \mathcal{I}^j$.
\[ K = \sup_{\nu \in \mathcal{T}} \frac{\text{diam}(h_\nu(A))}{W(\nu) \text{diam}(A)}. \]
Since $\mathcal{T}$ is a finite set, $K < \infty$. Thus:
\[ \text{diam}(h^{\text{tail}}(A)) \le K \cdot W(\text{tail}) \cdot \text{diam}(A). \]

Combining these estimates:
\[ \text{diam}(h_\omega(A)) \le \text{diam}(h^{\text{head}}(h^{\text{tail}}(A))) \le W(\text{head}) \cdot K \cdot W(\text{tail}) \cdot \text{diam}(A). \]
Since $W(\text{head}) \cdot W(\text{tail}) = W(\omega)$, we obtain the uniform bound:
\[ \text{diam}(h_\omega(A)) \le K \text{diam}(A) W(\omega). \]

Using these geometric bounds, we estimate the sum of the $s$-th powers of the diameters for the cover at level $p$:
\[ \Sigma_p = \sum_{\omega \in \mathcal{I}^p} (\text{diam}(h_\omega(A)))^s \le \sum_{\omega \in \mathcal{I}^p} (K \text{diam}(A) W(\omega))^s. \]
Factoring out the constants:
\[ \Sigma_p \le K^s (\text{diam}(A))^s \sum_{\omega \in \mathcal{I}^p} W(\omega)^s. \]
The summation term is exactly the expansion of the sum of weights raised to the power $p$:
\[ \sum_{\omega \in \mathcal{I}^p} \left( \prod_{j=1}^p w(\omega_j) \right)^s = \left( \sum_{\psi \in \mathcal{F}} w(\psi)^s \right)^p. \]
By Equation \eqref{eq:sum_unity_final}, the term inside the parentheses is 1. Therefore:
\[ \Sigma_p \le K^s (\text{diam}(A))^s  < \infty. \]
Since the sum is uniformly bounded for all $p$, we conclude that $\mathcal{H}^s(A) < \infty$.

Now, we prove that $\mathcal{H}^s(A) > 0$. We define a mass distribution $\mu$ on the attractor $A$ by assigning mass to the sets $h_\omega(A)$ such that $\mu(h_\omega(A)) = (W(\omega))^s$. To ensure this assignment yields a valid measure, we verify the consistency condition: for any word $\omega$, the mass assigned to $h_\omega(A)$ must equal the sum of the masses of its images under the maps in $\mathcal{F}$. Using the relation $\sum_{\psi \in \mathcal{F}} w(\psi)^s = 1$, we have:
\[
    (W(\omega))^s = (W(\omega))^s \cdot 1 = (W(\omega))^s \left( \sum_{\psi \in \mathcal{F}} w(\psi)^s \right) = \sum_{\psi \in \mathcal{F}} (W(\omega \psi))^s.
\]
This additivity ensures that the definition is consistent across all levels of the construction. Hence, $\mu$ is a well-defined measure of total mass 1 supported on $A$.

Let $\mathcal{I}^* = \bigcup_{p=1}^\infty \mathcal{I}^p$ denote the set of all finite words. For any word $\omega = (\omega_1, \dots, \omega_p)$, let $\omega^- = (\omega_1, \dots, \omega_{p-1})$ denote the word obtained by removing the last symbol.

Let $U$ be an arbitrary ball of radius $r < 1$. We define a finite cut-set $\mathcal{Q} \subset \mathcal{I}^*$ by truncating every sequence at the first index where the formal weight drops below $r$:
\[ \mathcal{Q} = \{ \omega \in \mathcal{I}^* : W(\omega) \le r < W(\omega^-) \}. \]
Let $w_{\min} = \min_{\psi} w(\psi)$. The stopping condition ensures that for all $\omega \in \mathcal{Q}$:
\begin{equation} \label{eq:weight_comparable}
    w_{\min} r < W(\omega) \le r.
\end{equation}

By the Open Set Condition, there exists a non-empty bounded open set $V$ such that the images $\{ \psi(V) : \psi \in \mathcal{F} \}$ are disjoint subsets of $V$. Since $V$ is open and bounded, we can find finite positive real numbers $a_1$ and $a_2$ such that $V$ contains a ball of radius $a_1$ and is contained in a ball of radius $a_2$. The collection $\mathcal{V} = \{ h_\omega(V) : \omega \in \mathcal{Q} \}$ consists of disjoint open sets. Moreover, the attractor is contained in the union of their closures:
\[ A \subseteq \bigcup_{\omega \in \mathcal{Q}} \overline{h_\omega(V)}. \]
We now verify that $\mathcal{V}$ satisfies the conditions of Lemma \ref{lem:hutchinson_counting}.

Consider any $\omega \in \mathcal{Q}$ of length $p$. We decompose the length $p = n p_1 + p_2$ and the map $h_\omega = h^{\text{head}} \circ h^{\text{tail}}$ as in the Upper Bound proof.
The head $h^{\text{head}}$ is a similarity with ratio $W(\text{head})$. The tail map $h^{\text{tail}}$ belongs to $\mathcal{T}$, the finite set of compositions generated by $\mathcal{F}$ with lengths ranging from $n-1$ to $2(n-1)$. For each map $\psi \in \mathcal{T}$, let $\sigma_{\min}(\psi)$ and $\sigma_{\max}(\psi)$ denote the smallest and largest singular values of its linear part. We define the uniform bounds:
\[ c_{\min} = \min_{\psi \in \mathcal{T}} \sigma_{\min}(\psi), \quad c_{\max} = \max_{\psi \in \mathcal{T}} \sigma_{\max}(\psi), \quad \text{and} \quad w_{\min}^{\text{tail}} = \min_{\nu \in \mathcal{T}} W(\nu). \]
Since $\mathcal{T}$ is finite and consists of non-singular affine maps, $0 < c_{\min} \le c_{\max} < \infty$ and $w_{\min}^{\text{tail}} > 0$. Consequently, for any map $\psi \in \mathcal{T}$ and any ball $B$ of radius $\rho$, the image set $\psi(B)$ contains a ball of radius $c_{\min} \rho$ and is contained in a ball of radius $c_{\max} \rho$.

\begin{enumerate}
    \item Inner Ball Condition:
    Since $V$ contains a ball of radius $a_1$, $h^{\text{tail}}(V)$ contains a ball of radius $c_{\min} a_1$. Consequently, $h_\omega(V)$ contains a ball of radius $W(\text{head}) c_{\min} a_1$.
    Using the bound $W(\text{head}) \ge w_{\min} r$:
    \[ W(\text{head}) c_{\min} a_1 \ge (w_{\min} c_{\min} a_1) r. \]
    Let $k_1 = w_{\min} c_{\min} a_1$. Thus, $h_\omega(V)$ contains a ball of radius $k_1 r$.

    \item Outer Ball Condition:
    Since $V$ is contained in a ball of radius $a_2$, $h^{\text{tail}}(V)$ is contained in a ball of radius $c_{\max} a_2$. Consequently, $h_\omega(V)$ is contained in a ball of radius $W(\text{head}) c_{\max} a_2$.
    Using the bound $W(\text{head}) \le r/w_{\min}^{\text{tail}}$:
    \[ W(\text{head}) c_{\max} a_2 \le \left( \frac{c_{\max} a_2}{w_{\min}^{tail}} \right) r. \]
    Let $k_2 = c_{\max} a_2 (w_{\min}^{\text{tail}})^{-1}$. Thus, $h_\omega(V)$ is contained in a ball of radius $k_2 r$.
\end{enumerate}

The collection $\mathcal{V}$ consists of disjoint open sets. Furthermore, we have established that every set $h_\omega(V) \in \mathcal{V}$ contains a ball of radius $k_1 r$ and is contained in a ball of radius $k_2 r$. Thus, the collection satisfies the hypothesis of Lemma \ref{lem:hutchinson_counting}. Consequently, the number of closures $\overline{h_\omega(V)}$ intersecting the ball $U$ is bounded by a constant $M$.
Since $h_\omega(A) \subset \overline{h_\omega(V)}$, the number of sets $h_\omega(A)$ intersecting $U$ is also bounded by $M$.

The mass of $U$ is bounded by the sum of the masses of these intersecting sets:
\[ \mu(U) \le \sum_{\substack{\omega \in \mathcal{Q} \\ h_\omega(A) \cap U \neq \emptyset}} \mu(h_\omega(A)) = \sum_{\substack{\omega \in \mathcal{Q} \\ h_\omega(A) \cap U \neq \emptyset}} (W(\omega))^s. \]
Since the number of terms in the sum is at most $M$ and $W(\omega) \le r$, we have:
\[ \mu(U) \le M r^s. \]
Since any set $E$ of diameter $d < 1$ is contained in a ball of radius $d$, we have $\mu(E) \le M d^s = M (\text{diam}(E))^s$. Hence by the Mass Distribution Principle (Proposition \ref{prop:mass_distribution}), $\mathcal{H}^s(A) > 0$.
\end{proof}

\begin{remark}
Alternatively, we can derive the dimension formula heuristically by assuming that the $s$-dimensional Hausdorff measure $\mu$ is finite and positive on the attractor $A$. Let $s = \dim_H(A)$ and let $\mu$ be the restriction of the $s$-dimensional Hausdorff measure to $A$. By the invariance of the attractor, we have the decomposition:
\begin{equation} \label{eq:decomp_heur}
    A = f(A) \cup \bigcup_{i=1}^k g_i(A).
\end{equation}
Under the Open Set Condition, these images are pairwise disjoint up to a set of $\mu$-measure zero. Using the additivity of the measure and the fact that each $g_i$ is a similarity with ratio $r_i$, we obtain:
\[
\mu(A) = \mu(f(A)) + \sum_{i=1}^k r_i^s \mu(A) = \mu(f(A)) + R(s)\mu(A),
\]
where \(R(s) = \sum_{i=1}^k r_i^s\). Rearranging terms, the measure of the image under $f$ is given by:
\begin{equation} \label{eq:step1_heur}
    \mu(f(A)) = (1 - R(s))\,\mu(A).
\end{equation}

We now iterate the map $f$. Applying $f$ to both sides of \eqref{eq:decomp_heur} yields:
\[
f(A) = f^2(A) \cup \bigcup_{i=1}^k f(g_i(A)).
\]
Since the maps $g_i$ are $f$-aligned,  $\|f(g_i(v))\| = r_i \|f(v)\|$ for any vector $v$. This implies that the set $f(g_i(A))$ is geometrically similar to $f(A)$ with scaling factor $r_i$. Consequently, the Hausdorff measure scales as:
\[
\mu(f(g_i(A))) = r_i^s \mu(f(A)).
\]
Substituting this into the measure equation for $f(A)$:
\[
\mu(f(A)) = \mu(f^2(A)) + \sum_{i=1}^k r_i^s \mu(f(A)) = \mu(f^2(A)) + R(s)\mu(f(A)).
\]
Solving for $\mu(f^2(A))$ and substituting \eqref{eq:step1_heur}:
\[
\mu(f^2(A)) = (1 - R(s))\mu(f(A)) = (1 - R(s))^2 \mu(A).
\]
Repeating this iterative process $n$ times yields:
\[
\mu(f^n(A)) = (1 - R(s))^n \mu(A).
\]
By the definition of a $G^n$-similarity contraction, the map $f^n$ acts as a similarity transformation on $\mathbb{R}^m$ with ratio $c$. Therefore, it scales the $s$-dimensional measure of any set by $c^s$:
\[
\mu(f^n(A)) = c^s \mu(A).
\]
Equating the two expressions derived for $\mu(f^n(A))$:
\[
c^s \mu(A) = (1 - R(s))^n \mu(A).
\]
Assuming  $\mu(A) \in (0, \infty)$, we divide by $\mu(A)$:
\[
c^s = (1 - R(s))^n.
\]
Taking the $n$-th root and substituting the definition of $R(s)$:
\[
c^{s/n} = 1 - \sum_{i=1}^k r_i^s,
\]
which rearranges to the desired formula:
\[
c^{s/n} + \sum_{i=1}^k r_i^s = 1.
\]
\end{remark}

\section{Illustrative Examples}

In this section, we apply the theoretical framework developed in Theorem \ref{thm:general_dimension} to specific classes of self-affine systems. We note that all affine Iterated Function Systems presented herein satisfy the hypotheses of Theorem \ref{thm:existence_uniqueness_aligned} and the alignment conditions of Definition \ref{def:f_aligned}. Furthermore, the Open Set Condition is satisfied for each system. Consequently, the existence of a unique attractor is guaranteed, and the generalized dimension formula \eqref{eq:general_formula} is directly applicable. This reduces the determination of the Hausdorff dimension to finding the unique positive solution $s$ of the corresponding scalar equation. We present three distinct examples to demonstrate the geometric diversity of the fractals generated by these systems.

\begin{example} \label{ex:golden_ratio_fractal}
Consider the affine IFS on $\mathbb{R}^2$ generated by the maps:
\begin{align*}
    f_1(x, y) &= \left( \frac{y}{2} + 1, \; x \right), \\
    g_1(x, y) &= \left( -\frac{x}{2}, \; \frac{y}{2} + 1 \right).
\end{align*}
The attractor $A_1$ generated by this system is visualized in Figure \ref{fig:golden_fractal}. The recursive structure is highlighted in the right panel, where the red region corresponds to the first-order image $g_1(A_1)$, the blue region to the second-order image $f_1^2(A_1)$, and subsequent colors represent deeper levels of recursion.

\begin{figure}[htbp]
    \centering
    \begin{subfigure}[t]{0.48\textwidth}
        \centering
        \includegraphics[width=\textwidth]{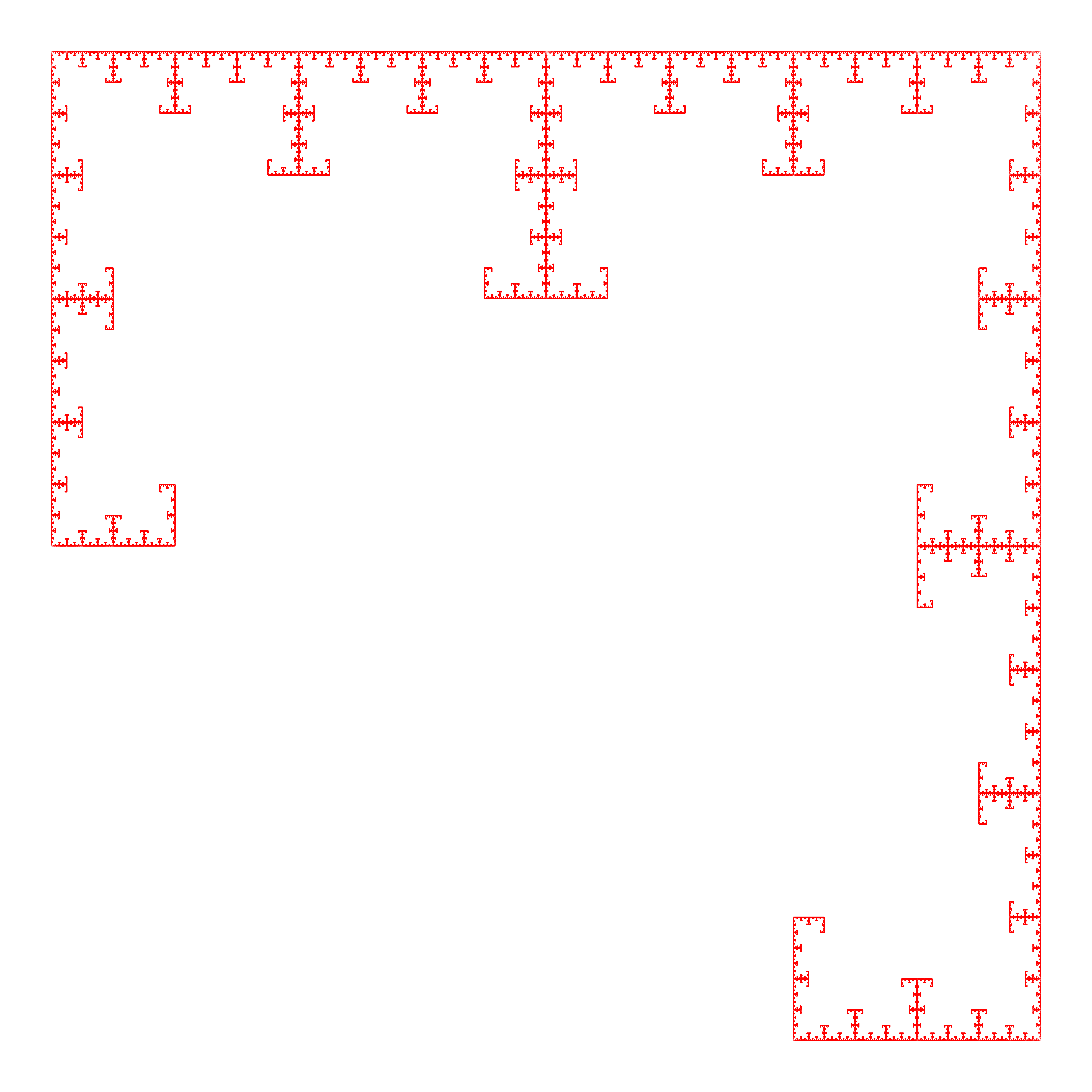}
        \caption{The attractor $A_1$.}
    \end{subfigure}
    \hfill
    \begin{subfigure}[t]{0.48\textwidth}
        \centering
        \includegraphics[width=\textwidth]{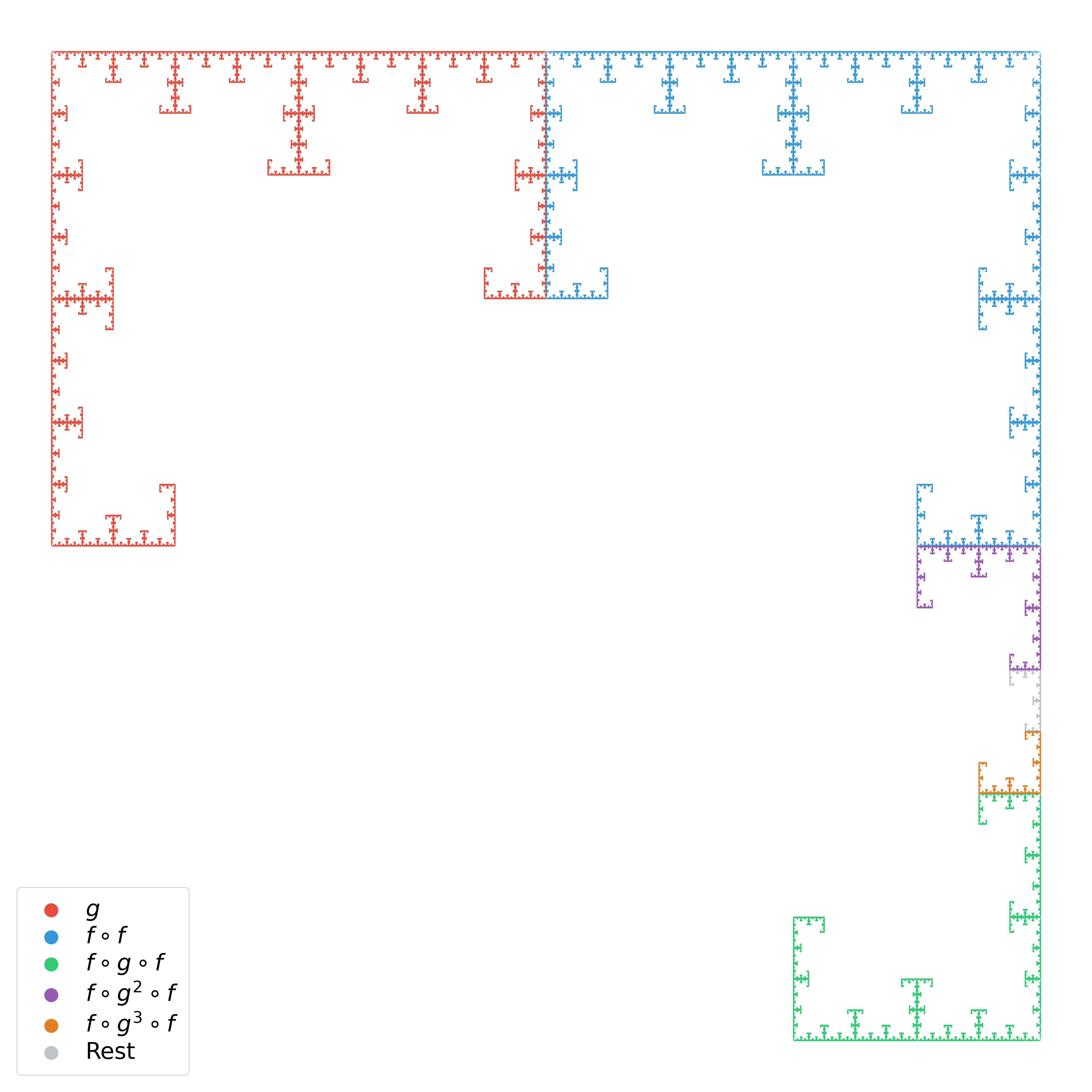}
        \caption{Decomposition of $A_1$ into self-similar copies.}
    \end{subfigure}
    \caption[The attractor A1 of the IFS \{f1, g1\} from Example \ref{ex:golden_ratio_fractal}]{The attractor $A_1$ of the IFS $\{f_1, g_1\}$ from Example \ref{ex:golden_ratio_fractal}. The multicolor plot (b) highlights the recursive structure: Red is $g_1(A_1)$, Blue is $f_1^2(A_1)$, etc.}
    \label{fig:golden_fractal}
\end{figure}

We verify that the system satisfies the conditions of Theorem \ref{thm:general_dimension}:
\begin{enumerate}
    \item The map $f_1$ has the linear part $M_{f_1} = \begin{pmatrix} 0 & 0.5 \\ 1 & 0 \end{pmatrix}$. Although $f_1$ permutes and scales coordinates differently, its second iterate $f_1^2$ acts as a similarity with ratio $c = 1/2$. Thus, we set $n=2$.
    \item The map $g_1$ is a similarity with ratio $r=1/2$. Furthermore, a direct calculation shows that the linear part of $g_1$ commutes with the symmetric matrix $M_{f_1}^\top M_{f_1}$, confirming that $g_1$ is $f_1$-aligned.
\end{enumerate}

Applying the dimension formula $c^{s/n} + r^s = 1$, we obtain:
\[
\left(\frac{1}{2}\right)^{s/2} + \left(\frac{1}{2}\right)^s = 1.
\]
Substituting $u = (1/2)^{s/2}$ yields the quadratic equation $u^2 + u - 1 = 0$. The positive solution is the inverse of the Golden Ratio, $u = 1/\varphi = (\sqrt{5}-1)/2$. Solving for $s$, we find the exact dimension:
\[
s = 2 \log_2(\varphi) \approx 1.388.
\]
\end{example}

\begin{example} \label{ex:multi_branch_fractal}
We extend the application to a system with multiple similarity branches. Let the IFS be defined by:
\begin{align*}
    f_2(x, y) &= \left( -\frac{y}{3}, \; x \right), \\
    g_{2a}(x, y) &= \left( \frac{x}{3} + 1, \; \frac{y}{3} \right), \\
    g_{2b}(x, y) &= \left( \frac{x}{3} - 1, \; \frac{y}{3} \right).
\end{align*}
The attractor $A_2$ is displayed in Figure \ref{fig:multi_branch}. The right panel explicitly visualizes the recursive decomposition, showing the primary components $g_{2a}(A_2)$ and $g_{2b}(A_2)$ alongside the self-similar progression of constituent pieces, where each colored component represents a self-similar copy of the attractor.

\begin{figure}[htbp]
    \centering
    \begin{subfigure}[t]{0.48\textwidth}
        \centering
        \includegraphics[width=\textwidth]{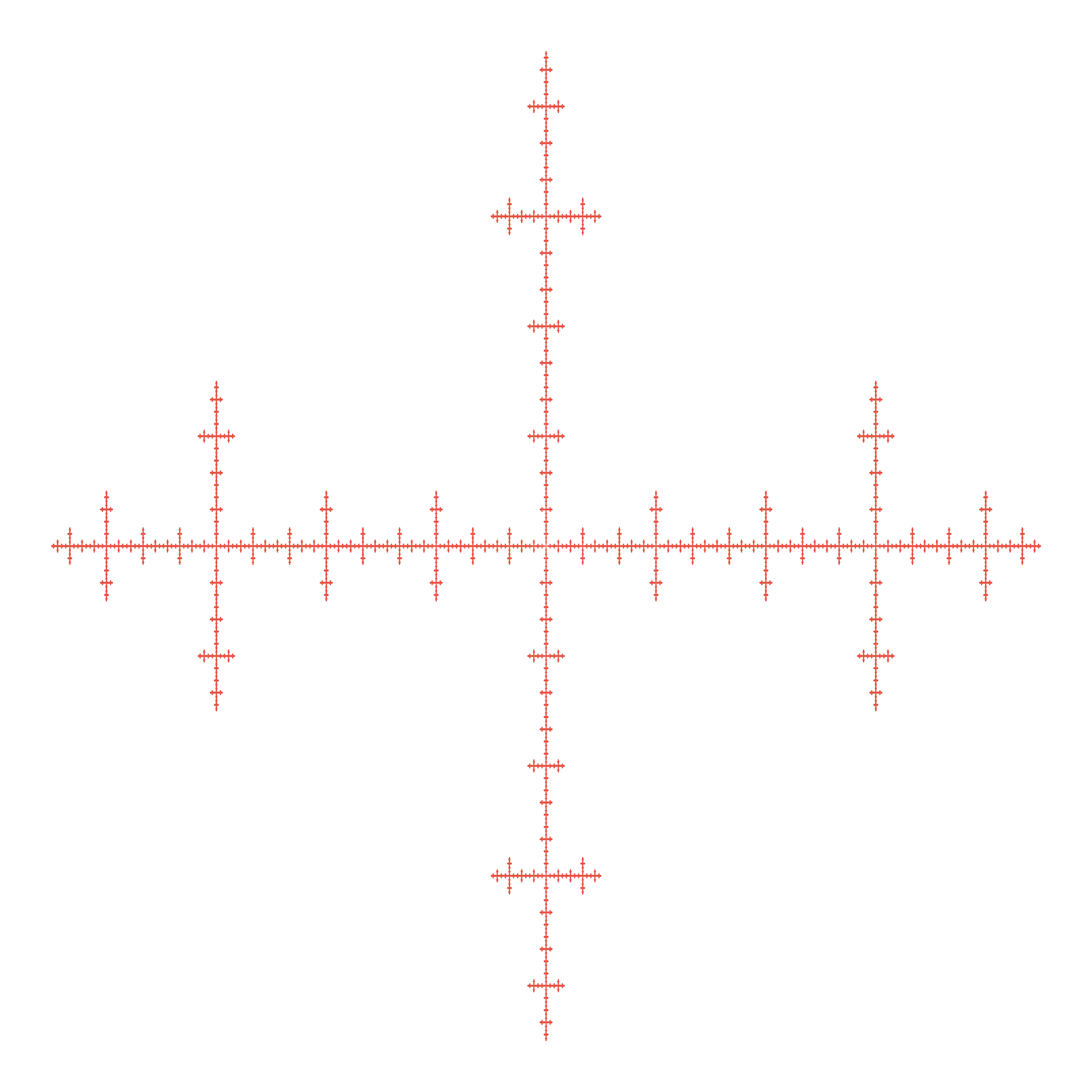}
        \caption{The attractor $A_2$.}
    \end{subfigure}
    \hfill
    \begin{subfigure}[t]{0.48\textwidth}
        \centering
        \includegraphics[width=\textwidth]{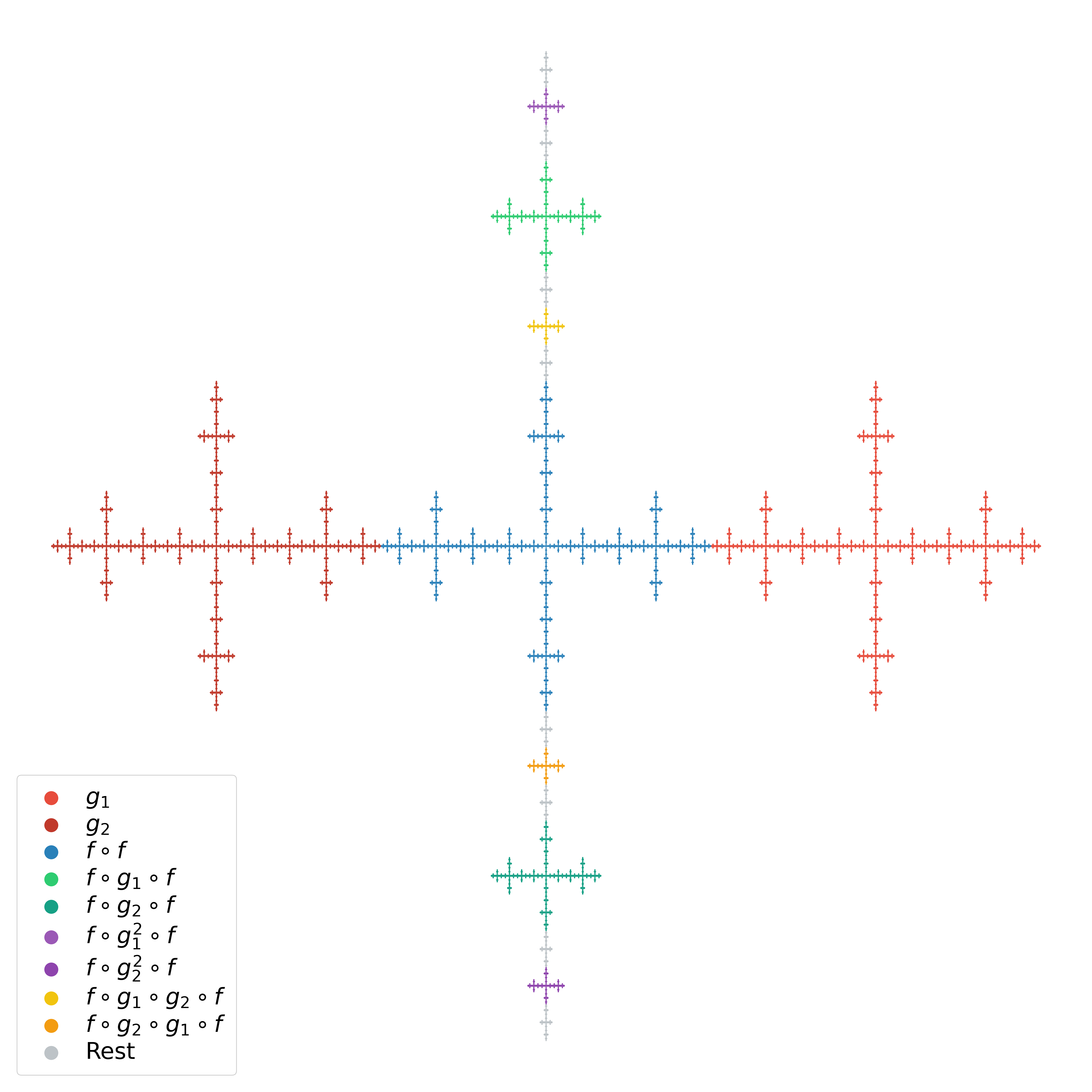}
        \caption{Decomposition of $A_2$ into self-similar copies.}
    \end{subfigure}
    \caption[The attractor $A_2$ of the IFS $\{f_2, g_{2a}, g_{2b}\}$ from Example \ref{ex:multi_branch_fractal}]{The attractor $A_2$ of the IFS $\{f_2, g_{2a}, g_{2b}\}$ from Example \ref{ex:multi_branch_fractal}. The right panel (b) visualizes the recursive decomposition into self-similar copies, where the colored regions represent the constituent components generated by the system.}
    \label{fig:multi_branch}
\end{figure}

The map $f_2$ represents a rotation by $90^\circ$ combined with non-uniform scaling, such that its second iterate $f_2^2$ is a similarity with ratio $c = 1/3$. The maps $g_{2a}$ and $g_{2b}$ are similarities with ratio $r = 1/3$ and zero rotation. Since their linear parts are scalar multiples of the identity, they commute with the linear part of $f_2$, ensuring that both $g_{2a}$ and $g_{2b}$ are $f_2$-aligned.

Substituting these parameters into the dimension formula ($n=2$) gives:
\[
c^{s/2} + r_a^s + r_b^s = 1 \implies \left(\frac{1}{3}\right)^{s/2} + 2\left(\frac{1}{3}\right)^s = 1.
\]
Letting $u = (1/3)^{s/2}$, we obtain $u + 2u^2 = 1$, which factors as $(2u - 1)(u + 1) = 0$. The unique positive solution is $u = 1/2$. Thus:
\[
\left(\frac{1}{3}\right)^{s/2} = \frac{1}{2} \implies 3^{s/2} = 2 \implies s = 2\log_3(2) = \log_3(4) \approx 1.2619.
\]
\end{example}

\begin{example} \label{ex:mixed_scaling_fractal}
We consider a third system defined by the maps:
\begin{align*}
    f_3(x, y) &= \left( \frac{y}{4}, \; 2x \right), \\
    g_3(x, y) &= \left( -\frac{x}{2}, \; -\frac{y}{2} + 1 \right).
\end{align*}
The attractor $A_3$ is displayed in Figure \ref{fig:example3}. The right panel identifies the components $g_3(A_3)$ and $f_3^2(A_3)$, alongside other smaller self-similar copies of the attractor, to illustrate the recursive structure of the system.

\begin{figure}[htbp]
    \centering
    \begin{subfigure}[t]{0.48\textwidth}
        \centering
        \includegraphics[width=\textwidth]{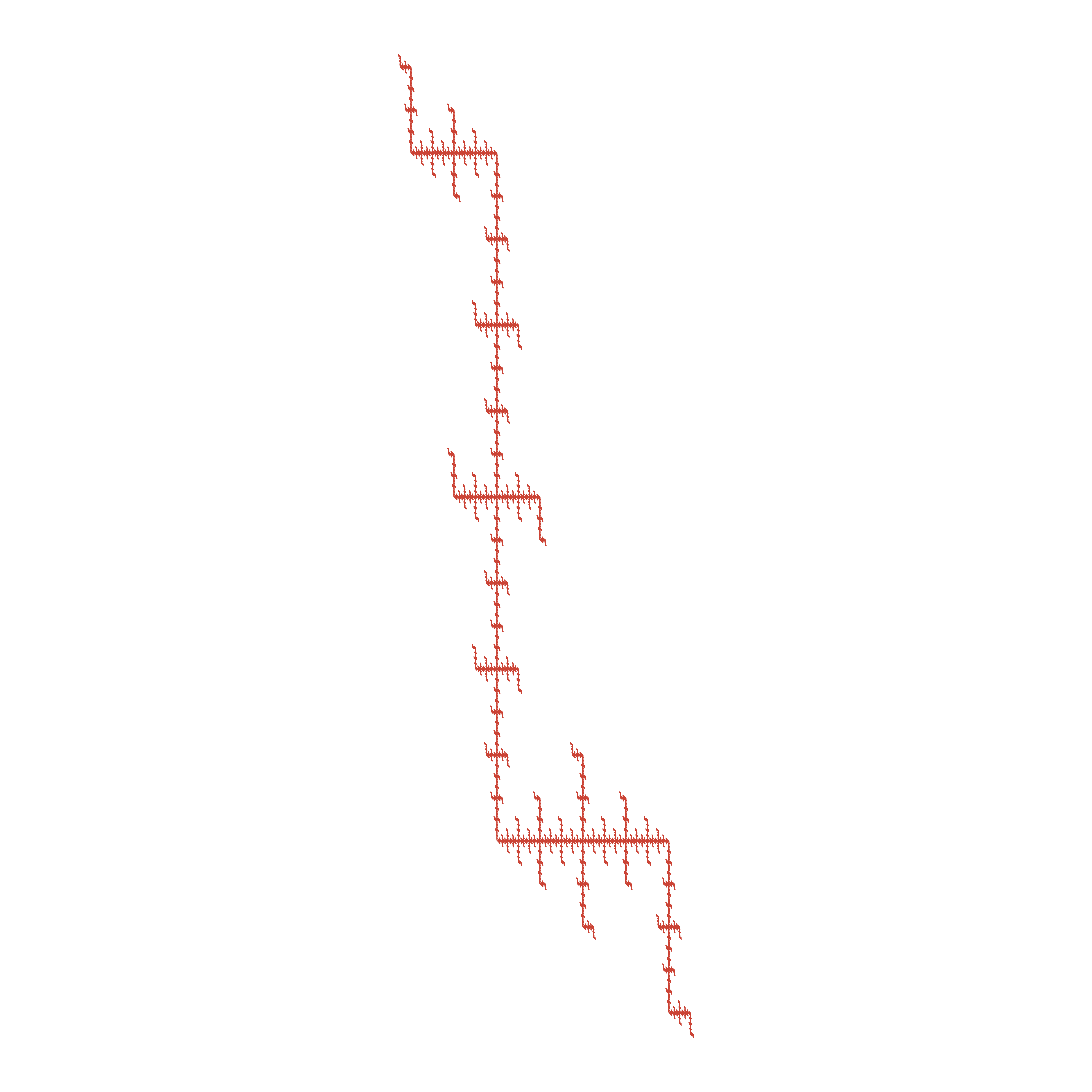}
        \caption{The attractor $A_3$.}
    \end{subfigure}
    \hfill
    \begin{subfigure}[t]{0.48\textwidth}
        \centering
        \includegraphics[width=\textwidth]{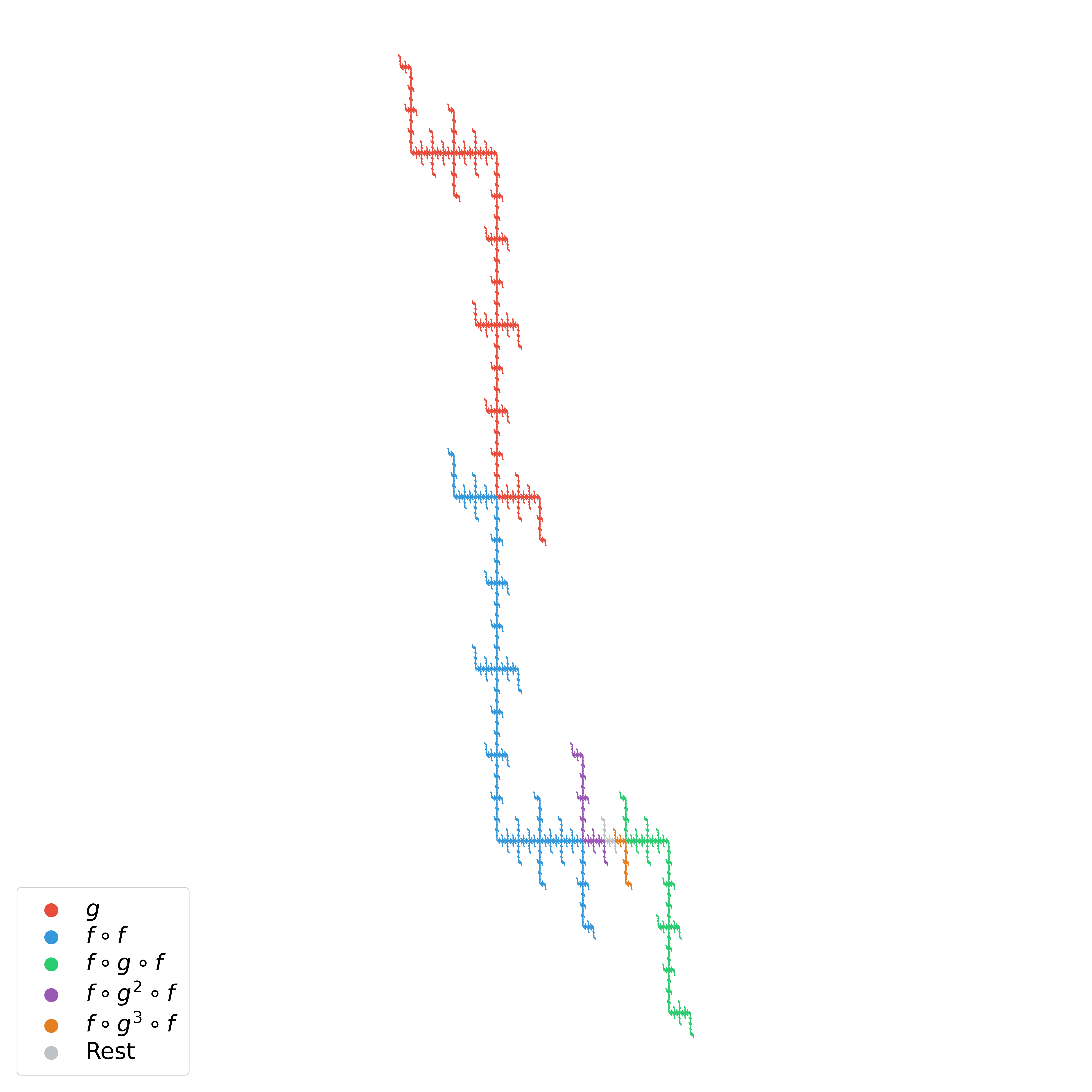}
        \caption{Decomposition of $A_3$ into self-similar copies.}
    \end{subfigure}
   \caption[Structure of the attractor $A_3$ from Example \ref{ex:mixed_scaling_fractal}]{The attractor $A_3$ of the IFS $\{f_3, g_3\}$ from Example \ref{ex:mixed_scaling_fractal}. The right panel (b) identifies the components $g_3(A_3)$ and $f_3^2(A_3)$, along with various smaller self-similar copies that constitute the attractor.}
    \label{fig:example3}
\end{figure}

The map $f_3$ exhibits significant non-uniform scaling, expanding in one coordinate while contracting in the other. However, the second iterate yields $f_3^2(x,y) = (x/2, y/2)$, which is a uniform similarity with ratio $c = 1/2$. The map $g_3$ is a standard similarity with ratio $r = 1/2$. As the linear parts of $g_3$ and $f_3^\top f_3$ are both diagonal, they commute, satisfying the $f_3$-alignment condition.

Substituting these parameters into the dimension formula (with $n=2$) gives:
\[
c^{s/2} + r^s = 1 \implies \left(\frac{1}{2}\right)^{s/2} + \left(\frac{1}{2}\right)^s = 1.
\]
This equation is identical to the one derived in Example \ref{ex:golden_ratio_fractal}. Consequently, the Hausdorff dimension is the same:
\[
s = 2\log_2(\varphi) \approx 1.388.
\]
\end{example}

\begin{example} \label{ex:example_four_fractal}
Consider the affine IFS on $\mathbb{R}^2$ generated by the maps:
\begin{align*}
    f_4(x, y) &= \left( -y, \; \frac{x}{3} \right), \\
    g_{4a}(x, y) &= \left( -\frac{x}{2} + 10, \; \frac{y}{2} - 9 \right), \\
    g_{4b}(x, y) &= \left( -\frac{x}{2} - 10, \; \frac{y}{2} + 9 \right).
\end{align*}
The attractor $A_4$ generated by this system is displayed in Figure \ref{fig:example_four_fractal}.

\begin{figure}[htbp]
    \centering
    \includegraphics[width=0.65\textwidth]{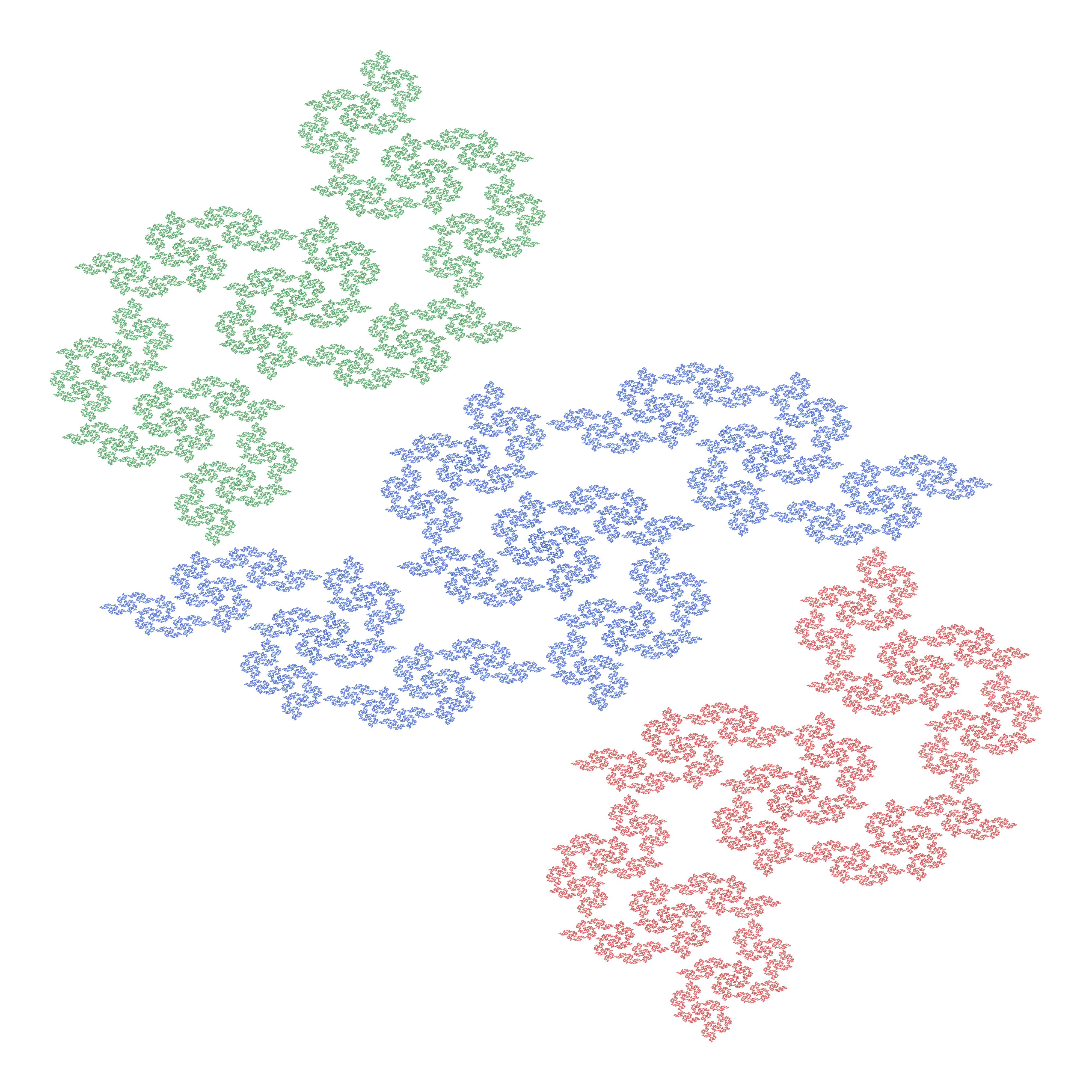}
    \caption{The attractor $A_4$ of the IFS $\{f_4, g_{4a}, g_{4b}\}$ from Example \ref{ex:example_four_fractal}. The colors correspond to the disjoint images of the attractor under the generating maps: $f_4(A_4)$ is shown in blue, $g_{4a}(A_4)$ in red, and $g_{4b}(A_4)$ in green.}
    \label{fig:example_four_fractal}
\end{figure}

We verify the hypotheses of Theorem \ref{thm:general_dimension}:
\begin{enumerate}
    \item The second iterate of $f_4$ is $f_4^2(x, y) = (-x/3, -y/3)$, which is a similarity with ratio $c = 1/3$. Thus, $n=2$.
    \item The maps $g_{4a}$ and $g_{4b}$ are similarities with ratio $r = 1/2$, and both are $f_4$-aligned.
\end{enumerate}

Consequently, the Hausdorff dimension $s$ satisfies the equation:
\[
\left(\frac{1}{3}\right)^{s/2} + 2\left(\frac{1}{2}\right)^s = 1.
\]
Solving numerically yields $s \approx 1.713$.

\end{example}

\section{Example: Importance of the Alignment Condition}

The $f$-aligned condition in Theorem \ref{thm:general_dimension} is not merely a simplifying assumption but an essential geometric constraint. If the similarity maps introduce rotations that misalign the principal axes of the affine map $f$, the standard dimension formula fails, even when the Open Set Condition (OSC) is rigorously satisfied. In this section, we construct an explicit example to illustrate this failure, where the formula strictly underestimates the actual Hausdorff dimension.

\begin{example} \label{ex:counter_example}
Consider an IFS on $\mathbb{R}^2$ consisting of one affine map $f$ and two similarity maps $g$ and $h$:
\begin{align*}
    f(x, y) &= \left( -\frac{y}{50}, \; 2x \right), \\
    g(x, y) &= \left( -\frac{y}{4} + 1, \; \frac{x}{4} + 11 \right), \\
    h(x, y) &= \left( -\frac{y}{4} - 1, \; \frac{x}{4} - 11 \right).
\end{align*}
\end{example}

The attractor $A$ generated by this system is visualized in Figure \ref{attractorcounter_example}.

\begin{figure}[htpb]
    \centering
    \includegraphics[width=0.6\textwidth]{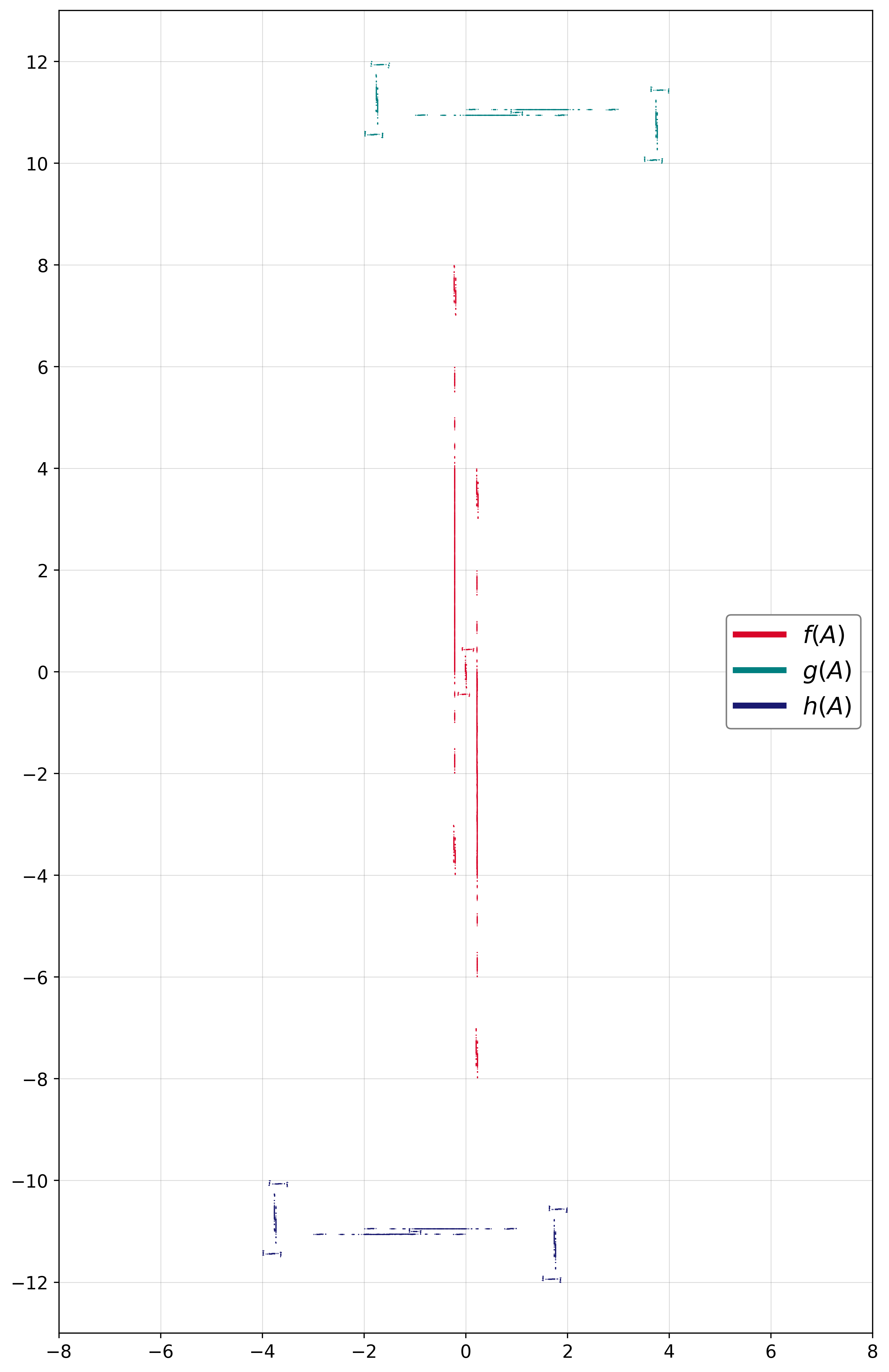}
    \caption[Counter-example Attractor]{The attractor of the IFS $\{f,g,h\}$. The structure contains a subset whose vertical projection is the interval $[-4, 4]$, demonstrating that $\dim_H(A) \ge 1$.}
    \label{attractorcounter_example}
\end{figure}

\subsection{Verification of Parameters and OSC}

\begin{enumerate}
    \item $G^2$-similarity of $f$: The map $f$ is not a similarity, as it contracts the $y$-axis by $1/50$ and stretches the $x$-axis by $2$. However, its second iterate is a uniform contraction:
    \[ f^2(x,y) = f\left(-\frac{y}{50}, 2x\right) = \left( -\frac{2x}{50}, 2\left(-\frac{y}{50}\right) \right) = \left( -\frac{x}{25}, -\frac{y}{25} \right). \]
    Thus, $f$ acts as a $G^2$-similarity with ratio $c = 1/25$.
    
    \item Similarities $g$ and $h$: Both $g$ and $h$ are similarities involving a rotation of $90^\circ$ and a scaling factor of $r = 1/4$.
    
    \item Open Set Condition: This system satisfies the OSC with the open rectangle $U = (-4, 4) \times (-12, 12)$. The images $f(U)$, $g(U)$, and $h(U)$ are contained within $U$ and are mutually disjoint. Specifically, the projection of the images onto the $y$-axis yields the disjoint intervals $(-8, 8)$ for $f(U)$, $(10, 12)$ for $g(U)$, and $(-12, -10)$ for $h(U)$.
    
    \item Non-alignment: The principal axes of $f$ are the expanding $x$-axis and the contracting $y$-axis. However, $g$ and $h$ rotate the coordinate system by $90^\circ$, mapping the $x$-axis to the $y$-axis. This explicitly mixes the expanding and contracting directions of $f$, violating the $f$-alignment condition.
\end{enumerate}

\subsection{Discrepancy with the Dimension Formula}
If the alignment condition were ignored and the formula from Theorem \ref{thm:general_dimension} were applied, the Hausdorff dimension $s$ would satisfy:
\[ c^{s/2} + 2r^s = 1 \]
Substituting $c = 1/25$ and $r = 1/4$:
\[ \left(\frac{1}{25}\right)^{s/2} + 2\left(\frac{1}{4}\right)^s = 1 \implies \left(\frac{1}{5}\right)^s + 2\left(\frac{1}{4}\right)^s = 1. \]
Numerical solution of this equation yields $s \approx 0.754$. This result suggests that the attractor is totally disconnected and dust-like (dimension strictly less than 1).

\subsection{Proof that \texorpdfstring{$\dim_H(A) \ge 1$}{dimH(A) >= 1}}
To rigorously establish that this result is incorrect, we consider the subsystem $\mathcal{F}'$ generated by the composite maps $\phi_1 = f \circ g$ and $\phi_2 = f \circ h$. Since $A$ is invariant under the original IFS, it must contain the attractor $A'$ of the subsystem, implying $\dim_H(A) \ge \dim_H(A')$.

Computing the action of these compositions reveals a decoupling of coordinates:
\begin{align}
    \phi_1(x, y) &= \left( -\frac{x}{200} - \frac{11}{50}, \; -\frac{y}{2} + 2 \right), \\
    \phi_2(x, y) &= \left( -\frac{x}{200} + \frac{11}{50}, \; -\frac{y}{2} - 2 \right).
\end{align}
Note that the $y$-coordinate of the image depends solely on the $y$-coordinate of the pre-image. Thus, the orthogonal projection $P_y(A')$ onto the $y$-axis is the attractor of the 1D IFS defined by:
\[ \psi_1(y) = -\frac{y}{2} + 2, \quad \psi_2(y) = -\frac{y}{2} - 2. \]
The interval $J = [-4, 4]$ is invariant under this 1D system, as $\psi_1(J) = [0, 4]$ and $\psi_2(J) = [-4, 0]$, whose union is $J$. By the uniqueness of the attractor guaranteed by Hutchinson's Theorem, $P_y(A') = [-4, 4]$. Since the dimension of a projection cannot exceed the dimension of the set itself:
\[ 1 = \dim_H(P_y(A')) \le \dim_H(A') \le \dim_H(A). \]
This result ($\dim_H(A) \ge 1$) strictly contradicts the value $s \approx 0.754$ obtained from the formula, proving that the alignment condition is indispensable for the validity of the dimension formula.

\section{Dimension of Affine Systems with \texorpdfstring{$k$}{k}-Iterate Similarity}

In this section, we establish the general formula for the Hausdorff dimension of attractors where the non-uniform nature of the individual maps is resolved at a higher iteration level. This result generalizes the Moran-Hutchinson formula to systems that are not similarity-based at the first order but become similarities under $k$-fold compositions.

\begin{theorem}\label{thm:general_k_dim}
Let $\{f_1, f_2, \dots, f_n\}$ be a set of affine maps on $\mathbb{R}^m$ satisfying the Open Set Condition (OSC). Let $k \ge 1$ be a fixed integer. Suppose that every composition of length $k$ chosen from the set $\{f_1, \dots, f_n\}$ is a similarity contraction.

Then there exists a unique non-empty compact invariant set $A$ (the attractor) satisfying $A = \bigcup_{i=1}^n f_i(A)$. Let $c_i$ denote the linear contraction ratio of the $k$-fold diagonal composition $f_i^k = f_i \circ \dots \circ f_i$. The Hausdorff dimension $s = \dim_H(A)$ is the unique solution to:
\begin{equation}
    \sum_{i=1}^n c_i^{s/k} = 1.
\end{equation}
\end{theorem}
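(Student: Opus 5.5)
The plan follows the proof of Theorem~\ref{thm:general_dimension}, with the $f$-alignment argument replaced by a determinant identity. Write $A_i$ for the linear part of $f_i$ and $A_\omega = A_{\omega_1}\cdots A_{\omega_p}$ for a word $\omega$.

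\textbf{Existence.} Every composition of length $k$ is a similarity contraction, so the family $\mathcal{F}^k$ of all $n^k$ compositions of length $k$ consists of strict contractions. Hutchinson's theorem gives $\mathcal{F}^k$ a unique non-empty compact invariant set $A$. The set $\bigcup_i f_i(A)$ is also compact and $\mathcal{F}^k$-invariant, so by uniqueness it equals $A$. Uniqueness for $\mathcal{F}$ follows because any $\mathcal{F}$-invariant compact set is $\mathcal{F}^k$-invariant.

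\textbf{Key step: multiplicative weights.} Set $w_i = c_i^{1/k}$ and, for a word $\omega$, $W(\omega) = \prod_j w_{\omega_j}$. I want to show that every composition $h_\omega$ whose length is a multiple of $k$ is a similarity with ratio exactly $W(\omega)$. This is the main obstacle: unlike in Theorem~\ref{thm:general_dimension}, no single map is a similarity to anchor the computation. I would handle it with determinants.

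First, $h_\omega$ is a similarity, since it is a composition of length-$k$ blocks, each a similarity. A similarity of $\mathbb{R}^m$ with ratio $\rho$ has linear part whose determinant satisfies $|\det| = \rho^m$. Applying this to $f_i^k$ gives $c_i^m = |\det A_i|^k$, so $w_i = |\det A_i|^{1/m}$. Applying it to $h_\omega$ and using multiplicativity of the determinant gives
\[
\rho(\omega)^m = \prod_j |\det A_{\omega_j}| = W(\omega)^m .
\]
In particular the ratio of $h_\omega$ depends only on the letters of $\omega$, not their order, and $\sum_i w_i^s = 1$ defines $s$ uniquely because the left side is strictly decreasing in $s$.

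\textbf{Dimension.} Decompose any word $\omega$ of length $p$ as a head of length $k\lfloor p/k\rfloor$ followed by a tail of length $p_2 \in \{0,\dots,k-1\}$. The head is a similarity with ratio $W(\text{head})$. The tails form a finite set $\mathcal{T}$, so there are uniform constants: $K$ bounding $\operatorname{diam} h_\nu(A)/(W(\nu)\operatorname{diam} A)$, the extreme singular values $c_{\min}, c_{\max}$, and $w^{\text{tail}}_{\min}$. No permutation of the word is needed here, which makes this simpler than Theorem~\ref{thm:general_dimension}.

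For the upper bound, $\operatorname{diam} h_\omega(A) \le K\, W(\omega)\operatorname{diam} A$. Hence
\[
\sum_{|\omega| = p} \bigl(\operatorname{diam} h_\omega(A)\bigr)^s \le K^s(\operatorname{diam} A)^s \Bigl(\sum_i w_i^s\Bigr)^p = K^s(\operatorname{diam} A)^s ,
\]
which gives $\mathcal{H}^s(A) < \infty$.

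For the lower bound, define the mass distribution $\mu\bigl(h_\omega(A)\bigr) = W(\omega)^s$; it is consistent because $\sum_i w_i^s = 1$. Given a ball $U$ of radius $r$, take the cut-set $\mathcal{Q} = \{\omega : W(\omega) \le r < W(\omega^-)\}$ and the OSC set $V$. The inner and outer ball radii of $h_\omega(V)$ for $\omega \in \mathcal{Q}$ are comparable to $r$, by the same head/tail estimates as in Theorem~\ref{thm:general_dimension}. Lemma~\ref{lem:hutchinson_counting} then bounds by $M$ the number of $\omega \in \mathcal{Q}$ whose sets meet $U$, so $\mu(U) \le M r^s$. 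Proposition~\ref{prop:mass_distribution} gives $\mathcal{H}^s(A) > 0$, hence $\dim_H A = s$.
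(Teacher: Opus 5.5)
Your proof is correct, and its key algebraic step, $|\det A_i|^{1/m} = c_i^{1/k}$ (so every composition whose length is a multiple of $k$ is a similarity with ratio $W(\omega)$), is exactly the determinant identity the paper uses. Where you diverge is in how you conclude the dimension.

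The paper does not go back to the original alphabet. It observes that the level-$k$ system $\mathcal{G} = \mathcal{F}^k$ is a self-similar IFS that inherits the OSC from the same open set. It applies the classical Moran--Hutchinson theorem to get $\sum_{|\omega|=k} W(\omega)^s = 1$. It then factors the left side as $\bigl(\sum_i c_i^{s/k}\bigr)^k$.

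You instead rerun the whole Hutchinson argument on the $n$ original maps: covering upper bound, mass distribution, cut-set and counting lemma. You do this with a head/tail split in which the tail has length at most $k-1$. That is valid:
\begin{enumerate}
\item The tails form a finite family of invertible affine maps; each $A_i$ is invertible because $A_i^k$ is.
\item The cut-set $\mathcal{Q}$ is an antichain, which gives disjointness of the sets $h_\omega(V)$.
\item As you note, no permutation of the word is needed.
\end{enumerate}
Your route gives a self-contained proof, including $0<\mathcal{H}^s(A)<\infty$, that follows the same template as Theorem~\ref{thm:general_dimension} and Theorem~\ref{thm:hybrid_dimension} without citing the self-similar theory.

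However, once your key step is established, the head/tail machinery is redundant. $A$ is the attractor of the similarity IFS $\mathcal{F}^k$, which satisfies the OSC with your set $V$. Hence the classical theorem together with $\sum_{|\omega|=k}W(\omega)^s = \bigl(\sum_i w_i^s\bigr)^k$ finishes the proof in two lines, and that is the paper's route.
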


\begin{proof}
Let $\mathcal{G}$ be the iterated function system consisting of all $n^k$ compositions of length $k$. By hypothesis, every map in $\mathcal{G}$ is a similarity contraction. Thus, by Hutchinson's Theorem, $\mathcal{G}$ admits a unique attractor $A$. It is a standard result that this set $A$ is also the unique invariant set for the original family $\{f_i\}_{i=1}^n$.

Since the original system $\{f_i\}_{i=1}^n$ satisfies the OSC with some open set $U$, the iterated system $\mathcal{G}$ also satisfies the OSC with the same set $U$. Specifically, for any distinct multi-indices $\mathbf{i}, \mathbf{j} \in \{1, \dots, n\}^k$, the images of $U$ under the corresponding maps in $\mathcal{G}$ are disjoint subsets of $U$.

Let $g_{\mathbf{j}} = f_{j_1} \circ f_{j_2} \circ \dots \circ f_{j_k}$ be an arbitrary element of $\mathcal{G}$. Since $\mathcal{G}$ is a system of similarities satisfying the OSC, the Hausdorff dimension $s$ is given by the Moran equation:
\begin{equation} \label{eq:moran_sum}
    \sum_{\mathbf{j} \in \{1,\dots,n\}^k} r_{\mathbf{j}}^s = 1,
\end{equation}
where $r_{\mathbf{j}}$ denotes the contraction ratio (Lipschitz constant) of the map $g_{\mathbf{j}}$.

In $\mathbb{R}^m$, the contraction ratio $r$ of a similarity map $S$ is related to its determinant by $r = |\det(S)|^{1/m}$. We first determine the determinant of the individual maps $f_i$. The diagonal composition $f_i^k$ is a similarity with ratio $c_i$, so:
\[ |\det(f_i^k)| = c_i^m \implies |\det(f_i)|^k = c_i^m \implies |\det(f_i)|^{1/m} = c_i^{1/k}. \]

Now, consider the composition $g_{\mathbf{j}} = f_{j_1} \circ \dots \circ f_{j_k}$. Its contraction ratio $r_{\mathbf{j}}$ is given by:
\begin{align*}
    r_{\mathbf{j}} &= |\det(f_{j_1} \circ \dots \circ f_{j_k})|^{1/m} \\
    &= \left( \prod_{p=1}^k |\det(f_{j_p})| \right)^{1/m} \\
    &= \prod_{p=1}^k |\det(f_{j_p})|^{1/m}.
\end{align*}
Substituting the relation $|\det(f_i)|^{1/m} = c_i^{1/k}$, this simplifies to:
\[ r_{\mathbf{j}} = \prod_{p=1}^k c_{j_p}^{1/k} = c_{j_1}^{1/k} c_{j_2}^{1/k} \dots c_{j_k}^{1/k}. \]

Substituting the expression for $r_{\mathbf{j}}$ into the Moran equation (\ref{eq:moran_sum}) and distributing the exponent $s$, we obtain:
\begin{equation}
    \sum_{j_1=1}^n \dots \sum_{j_k=1}^n \left( c_{j_1}^{s/k} c_{j_2}^{s/k} \dots c_{j_k}^{s/k} \right) = 1.
\end{equation}
Since the indices are independent, the multiple summation factorizes into the product of $k$ identical sums:
\[ \left( \sum_{i=1}^n c_i^{s/k} \right)^k = 1. \]
Taking the $k$-th root yields the desired result:
\[ \sum_{i=1}^n c_i^{s/k} = 1. \]
\end{proof}

We now extend the previous result, which assumed the Open Set Condition, to cases where the images $f_i(A)$ overlap in a structured manner.

\begin{theorem}\label{thm:gen_overlap_Rm}
Let $\{f_1, \dots, f_n\}$ be a finite set of affine maps on $\mathbb{R}^m$. Assume that for a fixed integer $k \ge 1$, every composition of length $k$ generated by the system is a similarity contraction. Let $A$ denote the unique non-empty compact attractor of the system, and let $c_i$ denote the contraction ratio of the $k$-fold diagonal composition $f_i^k = f_i \circ \dots \circ f_i$.

Suppose that the Open Set Condition is not satisfied. Specifically, assume that the union of the pairwise intersections of the images $f_i(A)$, given by $\bigcup_{1 \le i < l \le n} (f_i(A) \cap f_l(A))$, can be decomposed into $M$ disjoint sets $\{O_j\}_{j=1}^M$.

For each disjoint set $O_j$, let $I_j \subset \{1, \dots, n\}$ be the set of indices such that $O_j \subseteq f_i(A)$ for all $i \in I_j$, and $O_j \cap f_i(A) = \emptyset$ for all $i \notin I_j$. Let $q_j = |I_j|$ denote the multiplicity of the overlap $O_j$.

Furthermore, assume that each $O_j$ is homothetic to the attractor $A$. That is, each $O_j$ is the image of $A$ under a translation and a scaling by a scalar $\lambda_j \in \mathbb{R} \setminus \{0\}$, with magnitude $p_j = |\lambda_j| \in (0, 1)$. Then, the Hausdorff dimension $s = \dim_H(A)$ is the unique solution to the equation:
\begin{equation}
    \sum_{i=1}^n c_i^{s/k} - \sum_{j=1}^M (q_j - 1) p_j^s = 1.
\end{equation}
\end{theorem}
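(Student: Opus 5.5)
The plan is to run an inclusion–exclusion measure-balance argument in the spirit of the heuristic Remark after Theorem~\ref{thm:general_dimension}, and then justify the assumption $0<\mathcal{H}^s(A)<\infty$ separately.

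\textbf{Step 1: scaling of the pieces.} Pass to the $k$-iterated system $\mathcal{G}$ of all compositions of length $k$, which consists of similarities. As in the proof of Theorem~\ref{thm:general_k_dim}, the determinant computation gives $|\det(f_i)|^{1/m}=c_i^{1/k}$. Each $f_i$ is affine, so for any Borel set $E$ we get $\mathcal{H}^s(f_i(E))$ by composing: write $f_i = g\circ (f_i^{k-1})^{-1}$-type factorizations. More simply, $f_i(A)$ is a union of images $f_i\circ h(A)$ with $h$ ranging over words of length $k-1$, and each $f_i\circ h$ is a similarity. I would instead use the cleaner fact that on the attractor, the invariant measure $\mathcal{H}^s|_A$ transforms under $f_i$ by the factor $|\det f_i|^{s/m}=c_i^{s/k}$. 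This should follow because $f_i^k$ is a similarity of ratio $c_i$, and by the commuting/ratio structure of length-$k$ words every block of $k$ maps scales $\mathcal{H}^s$ exactly by the product of the $c_{j}^{s/k}$.

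\textbf{Step 2: measure bookkeeping.} Let $\mu=\mathcal{H}^s|_A$ with $s$ the root of the stated equation, and assume $0<\mu(A)<\infty$. From $A=\bigcup_i f_i(A)$, the pieces overlap only on $\bigcup_j O_j$, and $O_j$ lies in exactly $q_j$ of the pieces. Counting points with multiplicity gives
\[
\sum_{i=1}^n \mu(f_i(A)) = \mu(A) + \sum_{j=1}^M (q_j-1)\mu(O_j).
\]
Since $O_j$ is a homothetic copy of $A$ with ratio $p_j$, we have $\mu(O_j)=p_j^s\mu(A)$. Combining this with Step~1 and dividing by $\mu(A)$ yields the equation. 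Uniqueness of the root holds because the left-hand side is strictly decreasing in $s$ provided the overlaps are small relative to the pieces; this needs a short check, e.g.\ $p_j$ is dominated by the ratio of some $f_i$ containing $O_j$.

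\textbf{Step 3: justify $0<\mathcal{H}^s(A)<\infty$ (main obstacle).} With overlaps, the OSC-based mass distribution argument of Theorem~\ref{thm:general_dimension} is not available directly. I would first try to convert the structured-overlap system into an OSC system. The idea is to write $A$ as a disjoint, up to null sets, union of the pieces $f_i(A)$ with the redundant copies of each $O_j$ removed. Since $O_j$ is itself a similar copy of $A$, this produces a graph-directed or Moran-type recursion whose similarity dimension is exactly the root $s$. Finiteness would then follow from the covering argument of Theorem~\ref{thm:general_dimension}, applied to covers by $h_\omega(A)$ with the overlap terms counted once. Positivity would follow by defining a mass distribution with the weights $c_i^{s/k}$ corrected by the overlap terms, and verifying bounded overlap counts via Lemma~\ref{lem:hutchinson_counting}. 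This verification is where I expect the real difficulty: it requires a separation hypothesis on the reduced system that is implicit in the assumption that the $O_j$ are disjoint and exact homothetic copies. If needed, I would state that separation property explicitly as the operative meaning of the hypothesis.
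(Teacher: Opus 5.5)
Your Step 2 is the same level-one inclusion--exclusion identity the paper starts from. Step 1, however, is a genuine gap, and it sits exactly where the paper does its real work. Since $f_i$ is not a similarity, nothing a priori gives $\mathcal{H}^s(f_i(A))=|\det f_i|^{s/m}\,\mathcal{H}^s(A)$. The determinant controls $m$-dimensional volume only, and for $s<m$ the ratio $\mathcal{H}^s(f_i(E))/\mathcal{H}^s(E)$ depends on $E$. For instance, with $s=1$ and $E$ a segment along an eigenvector of $L_i^\top L_i$ ($L_i$ the linear part of $f_i$), the ratio is the corresponding singular value, not $|\det f_i|^{1/m}$. Knowing that every length-$k$ word is a similarity determines $\mathcal{H}^s(h_w(A))$ only when $|w|$ is a multiple of $k$. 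It does not tell you how the factor $c_i^s$ of $f_i^k$ splits among the individual copies of $f_i$. The route you set aside, $f_i(A)=\bigcup_{|v|=k-1}f_i h_v(A)$, runs into overlaps inside that union, which is the same problem you started with. So the ``cleaner fact'' is asserted rather than proved.

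The missing idea is that the level-one balance can be pushed through any affine bijection $\phi(x)=Lx+b$. Such a $\phi$ preserves intersections and multiplicities, and $\phi(\lambda_jA+t)=\lambda_j\phi(A)+Lt+(1-\lambda_j)b$ is a homothetic copy of $\phi(A)$ with the same ratio. Hence, with $\Sigma=\sum_{j}(q_j-1)p_j^s$,
\[
\sum_{i=1}^n \mathcal{H}^s\bigl(\phi f_i(A)\bigr)=(1+\Sigma)\,\mathcal{H}^s\bigl(\phi(A)\bigr).
\]
Apply this to $\phi=h_u$ for all words $u$ of length less than $k$ and sum level by level. This gives $\sum_{|w|=k}\mathcal{H}^s(h_w(A))=(1+\Sigma)^k\mathcal{H}^s(A)$. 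The determinant computation for the similarities $h_w$ gives $\bigl(\sum_i c_i^{s/k}\bigr)^k\mathcal{H}^s(A)$ for the same sum, and comparing the two yields the equation. This is the paper's proof; your Step 1 then becomes a corollary of it rather than an input.

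Also, the bookkeeping should be run with $s=\dim_H A$, as the paper does. With $s$ defined as the root, Step 2 merely re-verifies the equation, and all the content moves to Step 3. That step is left open in the paper too, which simply assumes $0<\mathcal{H}^s(A)<\infty$, so you are not missing anything the paper supplies there. Your sketch does not close it either, and as you note it would need a separation hypothesis that is absent from the statement.

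Finally, your monotonicity argument for uniqueness is unsupported. The overlap terms $-(q_j-1)p_j^s$ are increasing in $s$, so the left-hand side need not be strictly decreasing.
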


\begin{proof}
Since the system is eventually contractive (at iteration $k$), there exists a unique non-empty compact attractor $A$. Let $\mathcal{H}^s(A)$ denote its $s$-dimensional Hausdorff measure. We assume that $0 < \mathcal{H}^s(A) < \infty$.

The attractor satisfies $A = \bigcup_{i=1}^n f_i(A)$. We consider the sum of the measures of the images $f_i(A)$. This sum overcounts the regions where overlaps occur. Specifically, for any point $x$ in an overlap set $O_j$, the sum counts the measure $q_j$ times (once for each index $i \in I_j$), whereas the actual measure of the union counts it only once.

To balance the equation, we must subtract the excess multiplicity $(q_j - 1)$ for each overlap set:
\[ \mathcal{H}^s(A) = \sum_{i=1}^n \mathcal{H}^s(f_i(A)) - \sum_{j=1}^M (q_j - 1) \mathcal{H}^s(O_j). \]
Since each $O_j$ is homothetic to $A$ with scaling factor $p_j$, we have $\mathcal{H}^s(O_j) = p_j^s \mathcal{H}^s(A)$. Substituting this yields the measure balance equation for the first iteration:
\begin{equation} \label{eq:mult_level1}
    \sum_{i=1}^n \mathcal{H}^s(f_i(A)) = \mathcal{H}^s(A) \left( 1 + \sum_{j=1}^M (q_j - 1) p_j^s \right).
\end{equation}

We now extend this relation to the $k$-th iteration. Let $\mathcal{J}_k = \{1, \dots, n\}^k$ be the set of all length-$k$ indices. For any multi-index $\mathbf{j} = (j_1, \dots, j_k) \in \mathcal{J}_k$, let $g_{\mathbf{j}} = f_{j_1} \circ \dots \circ f_{j_k}$ denote the composed map.

We rely on the property that affine maps preserve the ratio of measures for homothetic sets. The linear part $L$ of any affine map commutes with scalar multiplication, i.e., $L(\lambda v) = \lambda L(v)$. Since $O_j$ is homothetic to $A$ (scaled by $\lambda_j$), the affine image $f(O_j)$ is homothetic to $f(A)$ with the same scaling factor $\lambda_j$. The affine transformation scales both sets identically, preserving the measure ratio:
\[ \frac{\mathcal{H}^s(f(O_j))}{\mathcal{H}^s(f(A))} = |\lambda_j|^s = p_j^s. \]
Applying this invariance recursively $k$ times to (\ref{eq:mult_level1}) yields:
\begin{equation} \label{eq:mult_levelk_bal}
    \sum_{\mathbf{j} \in \mathcal{J}_k} \mathcal{H}^s(g_{\mathbf{j}}(A)) = \mathcal{H}^s(A) \left( 1 + \sum_{j=1}^M (q_j - 1) p_j^s \right)^k.
\end{equation}

Conversely, utilizing the hypothesis that every $k$-length composition is a similarity contraction, Theorem \ref{thm:general_k_dim} implies the total measure scales as:
\begin{equation} \label{eq:mult_levelk_sim}
    \sum_{\mathbf{j} \in \mathcal{J}_k} \mathcal{H}^s(g_{\mathbf{j}}(A)) = \mathcal{H}^s(A) \left( \sum_{i=1}^n c_i^{s/k} \right)^k.
\end{equation}
Equating (\ref{eq:mult_levelk_bal}) and (\ref{eq:mult_levelk_sim}) and dividing by $\mathcal{H}^s(A)$, we obtain:
\[ \left( 1 + \sum_{j=1}^M (q_j - 1) p_j^s \right)^k = \left( \sum_{i=1}^n c_i^{s/k} \right)^k. \]
Taking the $k$-th root and rearranging terms yields the final result:
\[ \sum_{i=1}^n c_i^{s/k} - \sum_{j=1}^M (q_j - 1) p_j^s = 1. \]
\end{proof}

To illustrate the practical application of these formulae, we now examine specific affine systems where the $k$-iterate similarity property holds, explicitly calculating the dimension for cases with and without overlaps.

\begin{example}\label{ex:k2_sim}
Consider the Iterated Function System on $\mathbb{R}^2$ defined by the two affine maps:
\begin{align*}
    f(x, y) &= \left( -\frac{y}{6}, x \right), \\
    g(x, y) &= \left( \frac{y}{3} + 1, 2x + 1 \right).
\end{align*}
The linear parts of these maps are given by the matrices:
\[
    M_f = \begin{pmatrix} 0 & -1/6 \\ 1 & 0 \end{pmatrix}, \quad 
    M_g = \begin{pmatrix} 0 & 1/3 \\ 2 & 0 \end{pmatrix}.
\]
Neither $M_f$ nor $M_g$ is a similarity. However, we observe that for $k=2$, every possible composition is a similarity transformation:
\begin{itemize}
    \item The diagonal maps $f^2$ and $g^2$ are uniform similarities with contraction ratios $c_1 = 1/6$ and $c_2 = 2/3$, respectively.
    \item The mixed compositions $f \circ g$ and $g \circ f$ are scaled reflections with contraction ratio $r = 1/3$.
\end{itemize}
Since all compositions of length $k=2$ are similarities and the Open Set Condition is satisfied (as seen in Figure \ref{fig:k2_attractor}), we apply Theorem \ref{thm:general_k_dim}:
\[
    \sum_{i=1}^n c_i^{s/k} = 1 \implies \left(\frac{1}{6}\right)^{s/2} + \left(\frac{2}{3}\right)^{s/2} = 1.
\]
Solving this equation yields the Hausdorff dimension $s \approx 1.496$.
\begin{figure}[htpb]
    \centering
    \includegraphics[width=0.6\textwidth]{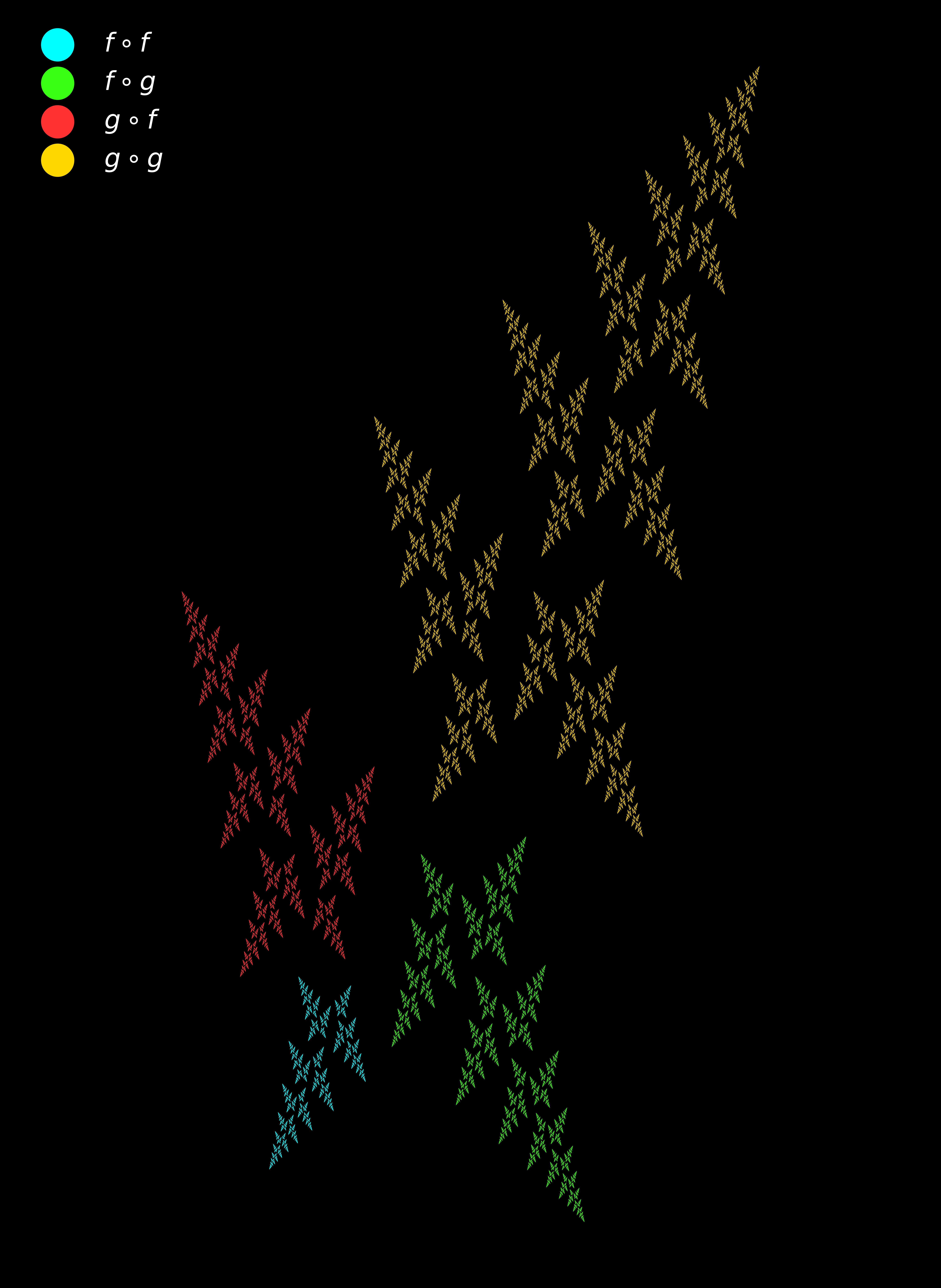}
    \caption{The attractor of the system $\{f,g\}$. The coloring distinguishes the four second-level images ($f^2, fg, gf, g^2$), demonstrating the separation of components required by the Open Set Condition.}
    \label{fig:k2_attractor}
\end{figure}
\end{example}

\begin{example}\label{ex:struct_overlap}

Consider the system defined by the four affine maps on $\mathbb{R}^2$:
\begin{align*}
    f_1(x, y) &= (y/4 - 1, -x), \\
    g_1(x, y) &= (-y/4 + 1, x), \\
    h_1(x, y) &= (-y/4, x + 2), \\
    k_1(x, y) &= (y/4, -x + 2).
\end{align*}
The linear parts of these maps are orthogonal matrices scaled by factors of $1$ and $1/4$. While the individual maps are not similarities, the system satisfies the $k$-iterate similarity condition for $k=2$. Specifically, any composition of length two from the set $\{f_1, g_1, h_1, k_1\}$ is a similarity contraction with ratio $c = 1/4$.

This system does not satisfy the Open Set Condition. As shown in Figure \ref{fig:overlap_attractor}, the images $h_1(A)$ and $k_1(A)$ intersect. The visual decomposition reveals that the overlap region $h_1(A) \cap k_1(A)$ consists of two distinct copies of the attractor $A$, each scaled by a factor $p = 1/4$.

We apply Theorem \ref{thm:gen_overlap_Rm} with $n=4$ maps, iteration depth $k=2$, contraction ratios $c_i = 1/4$, and $M=2$ overlap terms with scaling factors $p_1 = p_2 = 1/4$. The dimension $s$ is the unique solution to:
\begin{equation}
    \sum_{i=1}^4 c_i^{s/k} - \sum_{j=1}^2 p_j^s = 1
\end{equation}
Substituting the values:
\[
    4 \left(\frac{1}{4}\right)^{s/2} - 2 \left(\frac{1}{4}\right)^s = 1
\]
Simplifying with $x = (1/2)^s$:
\[
    4x - 2x^2 = 1 \implies 2x^2 - 4x + 1 = 0
\]
Solving the quadratic equation for $x < 1$:
\[
    x = \frac{4 - \sqrt{16 - 8}}{4} = \frac{4 - 2\sqrt{2}}{4} = 1 - \frac{\sqrt{2}}{2} \approx 0.2929
\]
The Hausdorff dimension is therefore:
\[
    s = \frac{\ln(1 - \frac{\sqrt{2}}{2})}{\ln(1/2)} \approx 1.771.
\]

\begin{figure}[htpb]
    \centering
    \includegraphics[width=0.7\textwidth]{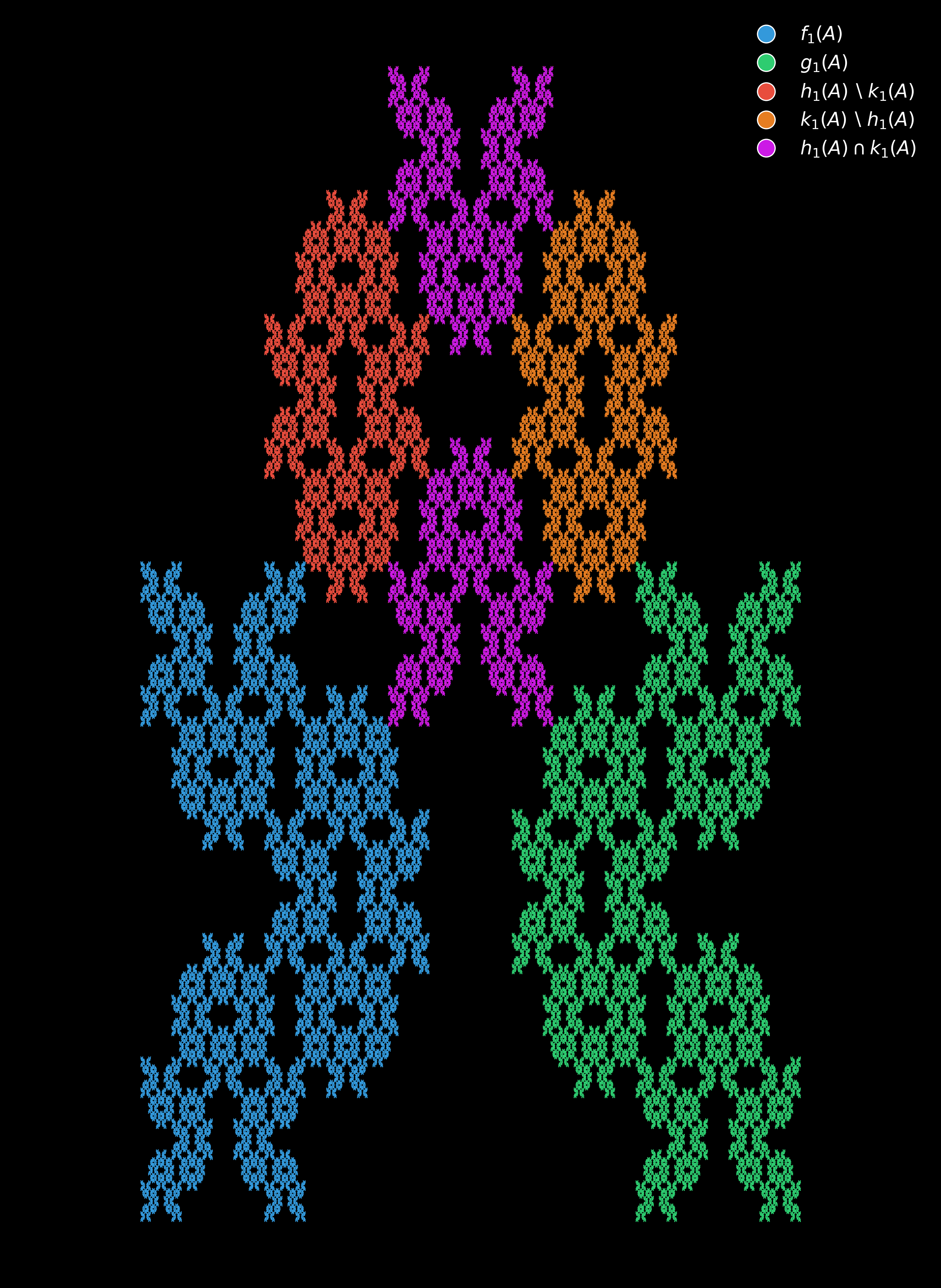}
    \caption{The attractor of the overlapping system $\{f_1, g_1, h_1, k_1\}$. The Open Set Condition fails as $h_1(A)$ and $k_1(A)$ intersect. The plot decomposes the set into components: Blue ($f_1(A)$), Green ($g_1(A)$), Red ($h_1(A) \setminus k_1(A)$), Orange ($k_1(A) \setminus h_1(A)$), and the overlap region in Purple ($h_1(A) \cap k_1(A)$).}
    \label{fig:overlap_attractor}
\end{figure}

\end{example}

\begin{example}\label{ex:mixed_overlap}

\begin{figure}[htpb]
    \centering
    \includegraphics[width=0.95\textwidth]{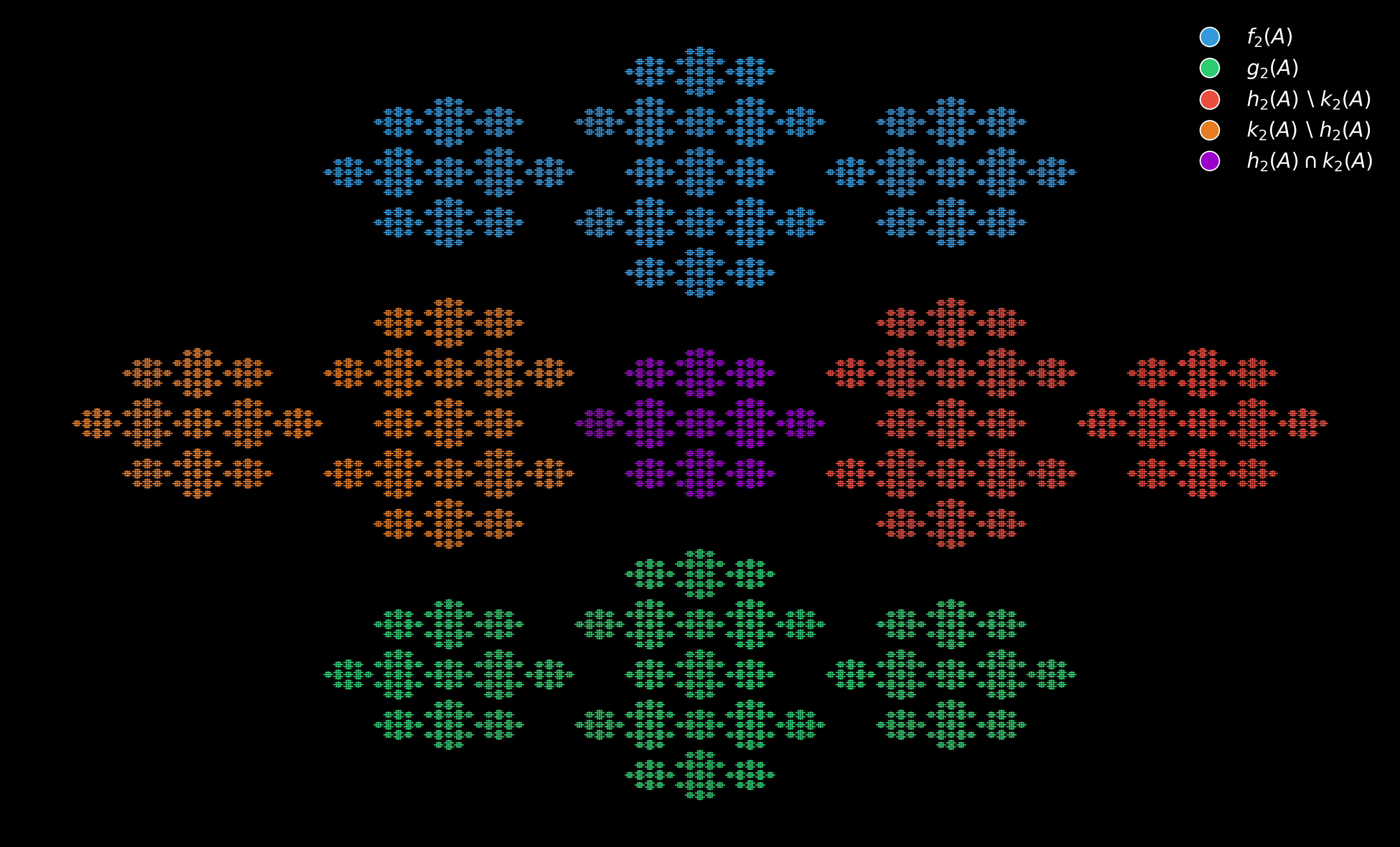}
    \caption{The attractor of the system $\{f_2, g_2, h_2, k_2\}$. The images $f_2(A)$ (Blue) and $g_2(A)$ (Green) are strictly disjoint from the rest of the set. The only overlap occurs between $h_2(A)$ (Red) and $k_2(A)$ (Orange) near the origin. The intersection region $h_2(A) \cap k_2(A)$ is highlighted in purple.}
    \label{fig:mixed_overlap}
\end{figure}

Consider the system defined by the following four affine maps on $\mathbb{R}^2$:
\begin{align*}
    f_2(x, y) &= (-y, x/5 + 1), \\
    g_2(x, y) &= (-y, x/5 - 1), \\
    h_2(x, y) &= (-y + 1, x/5), \\
    k_2(x, y) &= (-y - 1, x/5).
\end{align*}
Similar to the previous examples, the linear parts of these maps are not similarities. However, for $k=2$, the system exhibits similarity behavior. For any map $\phi \in \{f_2, g_2, h_2, k_2\}$, the composition $\phi^2$ is a similarity with contraction ratio $c = 1/5$.

The attractor is shown in Figure \ref{fig:mixed_overlap}. The visual decomposition confirms that the images $f_2(A)$ and $g_2(A)$ are completely isolated components.

The only violation of the Open Set Condition occurs between the images $h_2(A)$ and $k_2(A)$, which intersect in the central region. This intersection $h_2(A) \cap k_2(A)$ forms a self-similar copy of the attractor scaled by the factor $p = 1/5$.

Since $f_2(A)$ and $g_2(A)$ are isolated, they contribute no intersection terms. We apply Theorem \ref{thm:gen_overlap_Rm} with $n=4$ maps, iteration depth $k=2$, contraction ratios $c_i = 1/5$, and exactly $M=1$ overlap term with scaling factor $p_1 = 1/5$. The Hausdorff dimension $s$ satisfies:
\begin{equation}
    \sum_{i=1}^4 c_i^{s/k} - p_1^s = 1.
\end{equation}
Substituting the values:
\[
    4 \left(\frac{1}{5}\right)^{s/2} - \left(\frac{1}{5}\right)^s = 1.
\]
Let $x = (1/5)^{s/2}$. The equation becomes:
\[
    4x - x^2 = 1 \implies x^2 - 4x + 1 = 0.
\]
Solving for $x < 1$:
\[
    x = \frac{4 - \sqrt{16 - 4}}{2} = 2 - \sqrt{3}.
\]
Substituting back $x = (5^{-1/2})^s$:
\[
    5^{-s/2} = 2 - \sqrt{3} \implies -\frac{s}{2} \ln 5 = \ln(2 - \sqrt{3}).
\]
\[
    s = \frac{-2 \ln(2 - \sqrt{3})}{\ln 5} \approx 1.6365.
\]

\end{example}

\begin{remark}
One might argue that for the non-overlapping case in Example \ref{ex:k2_sim}, Theorem \ref{thm:general_k_dim} is not strictly necessary, as the dimension can be obtained by applying the standard Moran equation to the system $\mathcal{G} = \{f^2, fg, gf, g^2\}$, which consists of four similarity contractions. 

However, the utility of our approach becomes evident in the overlapping cases of Example \ref{ex:struct_overlap} and Example \ref{ex:mixed_overlap}. Although these attractors are also generated by the $16$ composite similarity mappings of level $k=2$, standard self-similar theory cannot be applied effectively. The Open Set Condition fails for these induced systems, rendering the Moran equation invalid. Moreover, applying the inclusion-exclusion principle to the $16$ similarity maps is intractable, as the overlaps at that level are not self-similar copies of the attractor.

In this context, Theorem \ref{thm:gen_overlap_Rm} is essential: it allows us to bypass the complexity of the similarity system by exploiting the simpler overlap structure available at the affine level ($k=1$), where the intersections are precise scaled copies of the attractor.
\end{remark}

\section{Generalization: Hybrid Systems}

We now unify the results of Theorem \ref{thm:general_dimension} and Theorem \ref{thm:general_k_dim} into a single framework. We consider a "hybrid" system consisting of a set of affine maps $\mathcal{F}$ (which resolve to similarities after $n$ iterations) and a set of standard similarity maps $\mathcal{G}$. For the dimension formula to hold, the geometric action of the similarities must be compatible with every affine map in the system.

\begin{definition}
Let $\mathcal{F} = \{f_1, \dots, f_N\}$ be a finite set of affine maps on $\mathbb{R}^m$. A similarity transformation $g: \mathbb{R}^m \to \mathbb{R}^m$ is said to be universally aligned with $\mathcal{F}$ if $g$ is $f_i$-aligned for every $i \in \{1, \dots, N\}$. Algebraically, if $S$ is the linear part of $g$ and $A_i$ is the linear part of $f_i$, then $S$ must commute with the symmetric matrix of every affine map:
\begin{equation}
    S (A_i^\top A_i) = (A_i^\top A_i) S \quad \text{for all } i = 1, \dots, N.
\end{equation}
\end{definition}

\begin{theorem} \label{thm:hybrid_existence}
Let $\mathcal{F} = \{f_1, \dots, f_q\}$ be a finite set of affine maps and $\mathcal{G} = \{g_1, \dots, g_p\}$ be a finite set of similarity contractions on $\mathbb{R}^m$. Assume the following conditions:
\begin{enumerate}
    \item Uniform $G^n$-Similarity: There exists an integer $n \ge 1$ such that for every sequence of indices $i_1, \dots, i_n \in \{1, \dots, q\}$, the composite map $f_{i_1} \circ \dots \circ f_{i_n}$ is a similarity contraction.
    \item Universal Alignment: Every map in $\mathcal{G}$ is universally aligned with $\mathcal{F}$.
\end{enumerate}
Then the Iterated Function System $\mathcal{H} = \mathcal{F} \cup \mathcal{G}$ admits a unique attractor.
\end{theorem}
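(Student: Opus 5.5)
The plan is to follow the proof of Theorem \ref{thm:existence_uniqueness_aligned}: show that for some length $L$ every composition in $\mathcal{H}^L$ is a strict contraction, and then apply Hutchinson's Theorem to $\mathcal{H}^L$. Take an arbitrary word $w$ of length $L$ in $\mathcal{H}$. Let $Q$ be the number of letters from $\mathcal{F}$ and $L-Q$ the number of letters from $\mathcal{G}$.

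The first step is to move the $g$'s out of the composition. Universal alignment says $S_j$ commutes with $A_i^\top A_i$ for every $i$. Since $S_j=\rho_j O_j$ with $O_j$ orthogonal, this gives $\|A_i S_j v\|^2 = \rho_j^2\langle A_i^\top A_i O_j v, O_j v\rangle$. I would use it to show that $\|A_{i_1}\cdots S_j\cdots A_{i_Q} v\|$ equals $\rho_j$ times the norm of a composition of the $A$'s with the orthogonal part left inside. That orthogonal part is what must be controlled.

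The cleanest route avoids tracking exact equalities and uses only operator norms. Write the linear part of $w$ as a product of $A$'s and $S$'s. Then $\|S_j\|=\rho_j\le r_{\max}<1$ and each $S_j$ is invertible with $\|S_j^{-1}\|=\rho_j^{-1}$. The difficulty is that the $A$'s need not be contractions individually. Only their length-$n$ products are similarities with ratio at most $c_{\max}:=\max$ over the finitely many length-$n$ words (all $<1$). The intended bound is
\[ \|{\rm lin}(w)\| \le r_{\max}^{\,L-Q}\, M\, c_{\max}^{\lfloor Q/n\rfloor}, \]
where $M=\max$ of $\|A_{i_1}\cdots A_{i_j}\|$ over $j<n$ (taking $M\ge 1$). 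To get this I need to commute each $S_j$ past the $A$'s up to orthogonal factors. Alignment gives this: $A_iS_j = \rho_j U A_i O_j$, or more usefully $\|A_i O v\| = \|A_i v\|$ whenever $O$ commutes with $A_i^\top A_i$.

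This is the main obstacle. Pushing the orthogonal part $O_j$ across $A_i$ produces $A_i O_j A_i^{-1}$, which is orthogonal (since $O_j$ commutes with $A_i^\top A_i$). However, it need not commute with the next $A_{i'}^\top A_{i'}$. The paper's earlier argument silently assumes that norms factor. So I would first try to prove that $\|A_{i_1}X_1A_{i_2}X_2\cdots\|$, with $X$'s orthogonal and aligned, equals the norm with the $X$'s removed, by induction from the innermost letter outward.

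If the induction fails, the fallback avoids commuting altogether. Group $w$ into maximal $\mathcal{F}$-blocks separated by $g$'s. Bound each block of length $b$ by $M c_{\max}^{\lfloor b/n\rfloor}$, and each $g$ by $r_{\max}$. Then absorb the factor $M$ per block into $g$'s by choosing an integer $t$ with $M r_{\max}^{t}<1$ and arguing by cases. Either there are few blocks, so the $\mathcal{F}$-letters dominate and $c_{\max}^{Q/n}$ decays. Or there are many blocks, which requires many $g$'s, but then $M^{\#\text{blocks}}$ may beat $r_{\max}^{\#g}$. Hence the alignment-based factorization is genuinely needed, and I would concentrate effort there.

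Given the factorization, the conclusion is identical to Theorem \ref{thm:existence_uniqueness_aligned}. We have $r_{\max}^{L-Q}c_{\max}^{\lfloor Q/n\rfloor}\le \max(r_{\max},c_{\max}^{1/n})^{L}c_{\max}^{-1}\to0$, so for large $L$ every element of $\mathcal{H}^L$ is a strict contraction. Hutchinson's Theorem then gives a unique compact $A$ invariant under $\mathcal{H}^L$. The standard argument that the Hutchinson operator of $\mathcal{H}$ maps this fixed point to another fixed point of the $\mathcal{H}^L$-operator shows that $A$ is the unique attractor of $\mathcal{H}$.
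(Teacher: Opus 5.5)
Your outline is the same as the paper's proof: bound $\mathrm{Lip}(w)\le r_{\max}^{L-Q}\,M\,c_{\max}^{\lfloor Q/n\rfloor}$ by pulling the ratios of the $g$'s out of the composition, then apply Hutchinson's theorem to $\mathcal{H}^L$. But you never establish the pulling-out step. You only propose an induction, and you concede that the fallback fails, so the proposal is not a proof.

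That step also cannot be established, because the obstruction you spotted is real. The paper goes from the one-letter identity $\|A_iS_jv\|=r_j\|A_iv\|$ straight to $\mathrm{Lip}(w)=\bigl(\prod_t r_{j_t}\bigr)\mathrm{Lip}(w_{\mathcal F})$ ``regardless of their position''. That identity fails for $n\ge 3$, and the statement itself is false there. The same applies to Theorem~\ref{thm:existence_uniqueness_aligned}, which is the case $q=1$.

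Here is a counterexample. Take $m=3$, $q=p=1$, $n=3$, and $f(x)=Ax$ with $Ae_1=10e_2$, $Ae_2=10e_3$, $Ae_3=10^{-3}e_1$.
\begin{itemize}
\item We have $A^3=\tfrac1{10}I$ and $A^\top A=\mathrm{diag}(100,100,10^{-6})$.
\item Let $O$ be the quarter turn about the $e_3$-axis ($Oe_1=e_2$, $Oe_2=-e_1$, $Oe_3=e_3$). It commutes with $A^\top A$, so $g(x)=\tfrac12Ox+b_g$ is an $f$-aligned similarity contraction.
\item But $AOe_2=-10e_2$, so the linear part of $(f\circ g)^k$ sends $e_2$ to $(-5)^ke_2$. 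For instance $\mathrm{Lip}((f\circ g)^3)=125$, while the claimed factorization predicts $\tfrac18\mathrm{Lip}(f^3)=\tfrac1{80}$. Hence no $\mathcal{H}^L$ is contractive.
\item There is in fact no attractor. Suppose $K\neq\emptyset$ is compact with $K=f(K)\cup g(K)$. Then $(f\circ g)^k(K)\subseteq K$ for all $k$. Since $\tfrac12AO$ preserves $\mathbb{R}e_2\oplus\mathrm{span}(e_1,e_3)$ and acts by $-5$ on $e_2$, boundedness forces $K-K\subseteq\mathrm{span}(e_1,e_3)$.
\item Next, $g(K)\subseteq K$ forces $K-K\subseteq\mathbb{R}e_3$. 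Then $f(K)\subseteq K$ forces $K-K=\{0\}$, because $Ae_3=10^{-3}e_1$.
\item So $K$ would be a common fixed point of $f$ and $g$. For $b_g\neq0$ none exists, so the system has no attractor at all.
\end{itemize}
In your terms, $AOA^{-1}$ is orthogonal (the quarter turn of the $e_2e_3$-plane) but mixes the eigenvalues $100$ and $10^{-6}$ of $A^\top A$. Equivalently, $O$ does not commute with $(A^2)^\top A^2=\mathrm{diag}(10^4,10^{-4},10^{-4})$, so your induction already breaks at two letters.

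The same happens in the plane once $n=3$. Take $A=R_\theta\,\mathrm{diag}(9,0.09)$ with $\cos\theta=10/101$. Then $A^3=-0.729\,I$ and $A^\top A$ is diagonal, so $S=0.9\,\mathrm{diag}(1,-1)$ is aligned, yet $\rho(AS)\approx1.30$.

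What is missing is a stronger hypothesis. For orthogonal $O$, $\|BOx\|=\|Bx\|$ for all $x$ if and only if $O$ commutes with $B^\top B$. Alignment gives this only for single letters $B=A_i$.

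Suppose instead that every orthogonal part $O_j$ (where $S_j=r_jO_j$) commutes with $Q^\top Q$ for every $\mathcal F$-word $Q$ of length less than $n$. Every $\mathcal F$-word factors as $B=PQ$, where $P$ is a product of length-$n$ blocks (so $P^\top P=\kappa^2I$) and $|Q|<n$. Hence $O_j$ commutes with $B^\top B=\kappa^2Q^\top Q$ for every $\mathcal F$-word $B$.

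Now write $\mathrm{lin}(w)=B_0S_1B_1\cdots S_tB_t$ and remove the orthogonal factors from the outermost inward, not the innermost outward: $\|B_0O_1y\|=\|B_0y\|$, then $\|B_0B_1O_2z\|=\|B_0B_1z\|$, and so on. This gives the exact identity $\mathrm{Lip}(w)=\bigl(\prod_t r_{j_t}\bigr)\mathrm{Lip}(w_{\mathcal F})$, after which your bound and the Hutchinson step are fine.

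For $n=2$ this extra hypothesis is exactly universal alignment, since $Q$ is empty or a single letter. So the theorem and your argument are correct for $n=2$ in every dimension, and all of the paper's examples have $n=2$. For $n\ge3$ it is a genuinely additional assumption that neither your proposal nor the paper supplies.
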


\begin{proof}
We establish that the system $\mathcal{H}$ is eventually contractive. Let $w$ be an arbitrary composite map of length $L$ generated by the IFS $\mathcal{H}$. We denote the Lipschitz constant of a map $\phi$ by $\text{Lip}(\phi)$.

Let $m$ be the number of occurrences of maps from $\mathcal{G}$ in the composition $w$, and let $k = L-m$ be the number of occurrences of maps from $\mathcal{F}$. Let $\{r_{j_1}, \dots, r_{j_m}\}$ be the contraction ratios of the specific maps from $\mathcal{G}$ appearing in $w$.

The Universal Alignment condition implies that for any affine map $\phi \in \mathcal{F}$ and any similarity $g \in \mathcal{G}$ with contraction ratio $r$, the linear distortion satisfies:
\[ \| \phi(g(v)) - \phi(g(u)) \| = r \| \phi(v) - \phi(u) \| \quad \text{for all } u, v \in \mathbb{R}^m. \]
This allows us to factor the scalar contraction ratios of the maps from $\mathcal{G}$ out of the composition norm, regardless of their position in the sequence:
\[ \text{Lip}(w) = \left( \prod_{t=1}^{m} r_{j_t} \right) \text{Lip}(w_{\mathcal{F}}), \]
where $w_{\mathcal{F}}$ denotes the sub-composition of length $k$ consisting solely of the maps from $\mathcal{F}$ in their original relative order.

We now derive a bound for $\text{Lip}(w_{\mathcal{F}})$. By the Uniform $G^n$-Similarity condition, any composition of $n$ maps from $\mathcal{F}$ is a similarity contraction. Let $c_{\max} < 1$ be the maximum contraction ratio of all such compositions of length $n$.
We decompose the integer $k$ as $k = qn + j$, where $0 \le j < n$. We can express $w_{\mathcal{F}}$ as the composition of $q$ blocks of length $n$ and a remainder block of length $j$. Let $M = \max \{ \text{Lip}(\psi) : \psi \in \bigcup_{i=1}^{n-1} \mathcal{F}^i \}$ be the maximum Lipschitz constant of any affine composition of length strictly less than $n$.
The Lipschitz constant satisfies:
\[ \text{Lip}(w_{\mathcal{F}}) \le M (c_{\max})^q = M (c_{\max})^{\lfloor k/n \rfloor}. \]

Substituting this back into the expression for $\text{Lip}(w)$, and letting $r_{\max} = \max \{ r_1, \dots, r_p \} < 1$ be the maximum contraction ratio in $\mathcal{G}$:
\[ \text{Lip}(w) \le (r_{\max})^m \cdot M \cdot (c_{\max})^{\lfloor k/n \rfloor}. \]

As the total length $L \to \infty$, we must have $m \to \infty$ or $k \to \infty$ (or both). In either case, $\lim_{L \to \infty} \sup_{w \in \mathcal{H}^L} \text{Lip}(w) = 0$.
Thus, the IFS is eventually contractive. By Hutchinson's Theorem, there exists a unique non-empty compact attractor $A$.
\end{proof}

\begin{theorem} \label{thm:hybrid_dimension}
Let $\mathcal{F} = \{f_1, \dots, f_q\}$ be a finite set of affine maps and $\mathcal{G} = \{g_1, \dots, g_p\}$ be a finite set of similarity contractions on $\mathbb{R}^m$. Assume the following conditions:
\begin{enumerate}
    \item Uniform $G^n$-Similarity: There exists an integer $n \ge 1$ such that every composition of length $n$ generated by $\mathcal{F}$ is a similarity contraction.
    \item Universal Alignment: Every map in $\mathcal{G}$ is universally aligned with $\mathcal{F}$.
    \item Open Set Condition: The union system $\mathcal{H} = \mathcal{F} \cup \mathcal{G}$ satisfies the OSC.
\end{enumerate}
Let $c_i$ denote the contraction ratio of the $n$-th iterate $f_i^n$, and let $r_j$ denote the contraction ratio of the similarity $g_j$.
Then, the Hausdorff dimension $s = \dim_H(A)$ of the attractor is the unique solution to:
\begin{equation} \label{eq:hybrid_formula}
    \sum_{i=1}^q c_i^{s/n} + \sum_{j=1}^p r_j^s = 1.
\end{equation}
Moreover, the $s$-dimensional Hausdorff measure satisfies $0 < \mathcal{H}^s(A) < \infty$.
\end{theorem}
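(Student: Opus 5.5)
The plan is to follow the proof of Theorem \ref{thm:general_dimension} closely, with two adjustments. The formal weights now come from the $n$-fold compositions of maps in $\mathcal{F}$, and the head/tail decomposition must respect words mixing several $f_i$.

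\textbf{Weights and normalization.} Define $w(f_i)=c_i^{1/n}$ and $w(g_j)=r_j$. Then \eqref{eq:hybrid_formula} becomes $\sum_{\psi\in\mathcal{H}}w(\psi)^s=1$. The key algebraic input is a determinant identity. Every composition $f_{i_1}\circ\cdots\circ f_{i_n}$ is a similarity, so its ratio is $\prod_t|\det A_{i_t}|^{1/m}$. Applying this to the diagonal composition $f_i^n$ gives $|\det A_i|^{1/m}=c_i^{1/n}$, exactly as in the proof of Theorem \ref{thm:general_k_dim}. Hence every length-$n$ word in $\mathcal{F}$ is a similarity with ratio equal to the product of the formal weights of its letters.

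\textbf{Reordering.} The next step is a reordering lemma. If $g$ is universally aligned with $\mathcal{F}$, then for any word $u$ in $\mathcal{F}$ one has $\|L_u S v\|=r\|L_u v\|$, where $L_u$ and $S$ are the linear parts of $u$ and $g$. To see this, write $S=rQ$ with $Q$ orthogonal. Commuting with $A_i^\top A_i$ forces $Q$ to preserve its eigenspaces, so $\|A_iQv\|=\|A_iv\|$. To iterate this through the word, I need $Q$ to also commute with $(A_jA_i)^\top(A_jA_i)$, and this is the point needing care. Alternatively, I would follow the paper's convention and take the factorization $\mathrm{Lip}(w)=\prod r_{j_t}\cdot\mathrm{Lip}(w_{\mathcal{F}})$ established in the proof of Theorem \ref{thm:hybrid_existence} as given. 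From this factorization,
\[\mathrm{diam}(h_\omega(A))\asymp \Big(\prod_{g\text{-letters}} r\Big)\,\mathrm{diam}(w_{\mathcal{F}}(A)).\]
Here $w_{\mathcal{F}}$ is the $\mathcal{F}$-subword in its original order. I split $w_{\mathcal{F}}$ into an initial segment whose length is a multiple of $n$, which is an exact similarity with ratio equal to its weight, followed by a remainder of length less than $n$. The remainder ranges over the finite set $\mathcal{T}=\bigcup_{j<n}\mathcal{F}^j$, so its singular values are bounded above and below by constants $c_{\min},c_{\max}$, and its weights are bounded above and below as well. This gives two-sided bounds
\[k_1W(\omega)\le \sigma_{\min}(h_\omega)\le\sigma_{\max}(h_\omega)\le k_2W(\omega).\]

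\textbf{Upper and lower bounds.} For the upper bound, cover $A$ by $\{h_\omega(A):\omega\in\mathcal{H}^p\}$. The diameter bound gives $\Sigma_p\le K^s\mathrm{diam}(A)^s\big(\sum w(\psi)^s\big)^p=K^s\mathrm{diam}(A)^s$, and since the mesh tends to $0$ by Theorem \ref{thm:hybrid_existence}, $\mathcal{H}^s(A)<\infty$. For the lower bound, define $\mu(h_\omega(A))=W(\omega)^s$. This is consistent because of the normalization, which yields a probability measure, namely the push-forward of the Bernoulli measure with weights $w(\psi)^s$. Given a ball $U$ of radius $r$, take the stopping set $\mathcal{Q}=\{\omega: W(\omega)\le r<W(\omega^-)\}$. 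Using the OSC set $V$ and the singular value bounds, each $h_\omega(V)$ with $\omega\in\mathcal{Q}$ contains a ball of radius $k_1a_1w_{\min}r$ and lies in a ball of radius $k_2a_2r$. The sets $h_\omega(V)$ are pairwise disjoint, because the words in $\mathcal{Q}$ are incomparable and the OSC holds. Lemma \ref{lem:hutchinson_counting} then bounds the number of them meeting $U$ by $M$, so $\mu(U)\le Mr^s$. The Mass Distribution Principle (Proposition \ref{prop:mass_distribution}) gives $\mathcal{H}^s(A)>0$, so $s=\dim_H A$. Uniqueness of $s$ follows because the left side of \eqref{eq:hybrid_formula} is strictly decreasing in $s$.

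\textbf{Main obstacle.} The main obstacle is the reordering step. Its delicate part is justifying that moving the aligned similarities out of a mixed word does not change the singular values of the $\mathcal{F}$-part. I would first try to show directly that the orthogonal part of each $g$ commutes with $L_u^\top L_u$ for every $\mathcal{F}$-word $u$, by induction on the length of $u$. If that fails in general, I would use the factorization asserted in the proof of Theorem \ref{thm:hybrid_existence}.
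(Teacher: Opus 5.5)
Your outline is the paper's proof. It uses the same weights, the same reordering of the aligned similarities out of the word, the same exact-similarity block plus bounded remainder, the same covering bound, and the same Bernoulli mass distribution with stopping set, Lemma \ref{lem:hutchinson_counting} and Proposition \ref{prop:mass_distribution}. Your determinant identity and your split of the $\mathcal{F}$-subword are cleaner than the paper's mixed head/tail.

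\textbf{The gap.} The gap is exactly the step you flag, and your fallback does not close it. The factorization $\mathrm{Lip}(w)=\prod_t r_{j_t}\,\mathrm{Lip}(w_{\mathcal{F}})$ in the proof of Theorem \ref{thm:hybrid_existence}, and the matching ``rearrangement'' in the paper's proof of this theorem, is asserted only from the one-step identity $\|A_iQv\|=\|A_iv\|$, and that identity does not propagate through a word.

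Your inductive criterion is the right one. Delete the $g$-letters starting from the left. The factorization holds as soon as every $Q_j$ commutes with $L_u^\top L_u$ for every $\mathcal{F}$-word $u$. A leading block of $n$ letters from $\mathcal{F}$ has linear part $\rho O$ with $O$ orthogonal, so $L_u^\top L_u$ is a positive multiple of $L_{u'}^\top L_{u'}$ with $|u'|<n$.
\begin{itemize}
\item For $n\le 2$, this condition is exactly universal alignment, so your argument is complete in that case.
\item For $n\ge 3$, it requires commutation with matrices such as $(A_jA_i)^\top(A_jA_i)$, and universal alignment does not give this.
\end{itemize}

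\textbf{Counterexample for $n=3$.} Take $m=2$, $\mathcal{F}=\{f\}$, $\mathcal{G}=\{g\}$, with
\[
A=\rho\begin{pmatrix}0&-1\\1&-1\end{pmatrix},\qquad S=\frac{r}{\sqrt5}\begin{pmatrix}-1&-2\\-2&1\end{pmatrix}.
\]
Then $A^3=\rho^3I$, so $n=3$, and neither $A$ nor $A^2$ is a similarity. Also $S$ commutes with $A^\top A=\rho^2\bigl(\begin{smallmatrix}1&-1\\-1&2\end{smallmatrix}\bigr)$, so $g$ is $f$-aligned.

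However, $A^2S=\frac{r\rho^2}{\sqrt5}\bigl(\begin{smallmatrix}-1&3\\1&2\end{smallmatrix}\bigr)$ has determinant $-r^2\rho^4$ and trace $r\rho^2/\sqrt5\neq 0$. It therefore has a real eigenvalue of modulus $r\rho^2\cdot\frac{1+\sqrt{21}}{2\sqrt5}$, which exceeds $r\rho^2=W(ffg)$.

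For $\omega=(ffg)^k$ this gives $\sigma_{\max}(h_\omega)/W(\omega)\ge\bigl(\frac{1+\sqrt{21}}{2\sqrt5}\bigr)^k\to\infty$. Since $\sigma_{\min}\sigma_{\max}=W(\omega)^2$, also $\sigma_{\min}(h_\omega)/W(\omega)\to 0$. This holds regardless of translations or the OSC, because it is a statement about linear parts only.

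So the two-sided bound $k_1W(\omega)\le\sigma_{\min}(h_\omega)\le\sigma_{\max}(h_\omega)\le k_2W(\omega)$ is false for $n\ge 3$. Both your upper and lower bounds rest on it, as do the paper's constants $K$, $k_1$, $k_2$. The step is therefore not merely unproved; it is false.

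\textbf{How to repair it.} Either restrict to $n\le 2$, or strengthen hypothesis (2) to require that each $S_j$ commute with $L_u^\top L_u$ for every $\mathcal{F}$-word $u$ of length less than $n$. This holds automatically when $S_j=\pm r_jI$. Under that hypothesis your induction closes and the rest of your proposal goes through as written.
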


\begin{proof}
First, we prove that $\mathcal{H}^s(A) < \infty$. Let $s$ be the unique solution to Equation \eqref{eq:hybrid_formula}. We assign a ``formal weight'' $w(\psi)$ to each function $\psi \in \mathcal{H} = \mathcal{F} \cup \mathcal{G}$ as follows:
\[
    w(f_i) = c_i^{1/n} \text{ for } f_i \in \mathcal{F}, \quad \text{and} \quad w(g_j) = r_j \text{ for } g_j \in \mathcal{G}.
\]
By the definition of $s$, these weights satisfy the normalization condition:
\begin{equation} \label{eq:sum_unity_hybrid}
    \sum_{\psi \in \mathcal{H}} w(\psi)^s = 1.
\end{equation}

Let $\mathcal{I}^L$ be the set of all finite words of length $L$ formed by the maps in $\mathcal{H}$. For any word $\omega = (\psi_1, \dots, \psi_L) \in \mathcal{I}^L$, we define the composite map $h_\omega = \psi_1 \circ \dots \circ \psi_L$. The set $h_\omega(A)$ represents the image of the attractor under this map. The collection $\{ h_\omega(A) : \omega \in \mathcal{I}^L \}$ forms a cover of $A$.

Consider an arbitrary word $\omega \in \mathcal{I}^L$. Using the Universal Alignment condition, we can rearrange the positions of the similarities in the composition $h_\omega$. We can move them anywhere within the sequence without changing the relative order of the affine maps, and the diameter of the image set $h_\omega(A)$ will be unaffected.

Let $L$ be sufficiently large such that we can partition the length $L = L_1 + L_2$, where $L_1$ is a positive multiple of $n$ and the tail length $L_2$ satisfies $n-1 \le L_2 \le 2(n-1)$.
We rearrange the sequence of functions composing $h_\omega$ into a ``head'' block $h^{\text{head}}$ of length $L_1$ and a ``tail'' block $h^{\text{tail}}$ of length $L_2$ such that $h^{\text{head}}$ contains exactly a multiple of $n$ affine maps.

We define the \textit{Total Formal Weight} of the word $\omega$ as $W(\omega) = \prod_{j=1}^L w(\psi_j)$.
We now bound the  diameter $\text{diam}(h_\omega(A))$ in terms of $W(\omega)$.

First, consider the head estimate. The Head contains exactly a multiple of $n$ affine maps. By the Uniform $G^n$-Similarity condition, the composition of these affine maps forms a similarity contraction. Since the other maps in the Head are standard similarities, the entire map $h^{\text{head}}$ acts as a similarity transformation whose ratio matches the product of its formal weights exactly. For any bounded set $B$:
\[ \text{diam}(h^{\text{head}}(B)) = W(\text{head}) \text{diam}(B). \]

Next, consider the tail estimate. The Tail has length $L_2$, where $n-1 \le L_2 \le 2(n-1)$. We define $\mathcal{T}$ as the set of all possible tails, i.e., all words of length between $n-1$ and $2(n-1)$ generated by the union system $\mathcal{H}$:
\[ \mathcal{T} = \bigcup_{j=n-1}^{2(n-1)} \mathcal{I}^j, \]
where $\mathcal{I}^j$ denotes the set of all words of length $j$ formed by maps in $\mathcal{H} = \mathcal{F} \cup \mathcal{G}$. We define the uniform distortion constant $K$ as:
\[ K = \sup_{\nu \in \mathcal{T}} \frac{\text{diam}(h_\nu(A))}{W(\nu) \text{diam}(A)}. \]
Since $\mathcal{T}$ is a finite set of non-singular maps, $K$ is finite. Thus:
\[ \text{diam}(h^{\text{tail}}(A)) \le K \cdot W(\text{tail}) \cdot \text{diam}(A). \]

Combining these estimates:
\[ \text{diam}(h_\omega(A)) \le \text{diam}(h^{\text{head}}(h^{\text{tail}}(A))) \le W(\text{head}) \cdot K \cdot W(\text{tail}) \cdot \text{diam}(A). \]
Since $W(\text{head}) \cdot W(\text{tail}) = W(\omega)$, we obtain the uniform bound:
\[ \text{diam}(h_\omega(A)) \le K \text{diam}(A) W(\omega). \]

With these geometric bounds established, we estimate the sum of the $s$-dimensional diameters for the cover at level $L$:
\[ \Sigma_L = \sum_{\omega \in \mathcal{I}^L} (\text{diam}(h_\omega(A)))^s \le \sum_{\omega \in \mathcal{I}^L} (K \text{diam}(A) W(\omega))^s. \]
Factoring out the constants:
\[ \Sigma_L \le K^s (\text{diam}(A))^s \sum_{\omega \in \mathcal{I}^L} W(\omega)^s. \]
The summation term is exactly the expansion of the sum of weights raised to the power $L$:
\[ \sum_{\omega \in \mathcal{I}^L} W(\omega)^s = \left( \sum_{\psi \in \mathcal{H}} w(\psi)^s \right)^L. \]
By Equation \eqref{eq:sum_unity_hybrid}, the term inside the parentheses is 1. Therefore:
\[ \Sigma_L \le K^s (\text{diam}(A))^s  < \infty. \]
Since the sum is uniformly bounded for all $L$, we conclude that $\mathcal{H}^s(A) < \infty$.

Now, we prove that $\mathcal{H}^s(A) > 0$. We define a mass distribution $\mu$ on the attractor $A$ by assigning mass to the sets $h_\omega(A)$ such that $\mu(h_\omega(A)) = (W(\omega))^s$. To ensure this assignment yields a valid measure, we verify the consistency condition: for any word $\omega$, the mass assigned to $h_\omega(A)$ must equal the sum of the masses of its images under the maps in $\mathcal{H}$. Using the relation $\sum_{\psi \in \mathcal{H}} w(\psi)^s = 1$, we have:
\[
    (W(\omega))^s = (W(\omega))^s \cdot 1 = (W(\omega))^s \left( \sum_{\psi \in \mathcal{H}} w(\psi)^s \right) = \sum_{\psi \in \mathcal{H}} (W(\omega \psi))^s.
\]
This additivity ensures that the definition is consistent across all levels of the construction. Hence, $\mu$ is a well-defined measure of total mass 1 supported on $A$.

Let $\mathcal{I}^*$ denote the set of all finite words. For any word $\omega \in \mathcal{I}^*$, let $\omega^-$ denote the word obtained by removing the last symbol.

Let $U$ be an arbitrary ball of radius $r < 1$. We define a finite cut-set $\mathcal{Q} \subset \mathcal{I}^*$ by truncating every sequence at the first index where the formal weight drops below $r$:
\[ \mathcal{Q} = \{ \omega \in \mathcal{I}^* : W(\omega) \le r < W(\omega^-) \}. \]
Let $w_{\min} = \min_{\psi \in \mathcal{H}} w(\psi)$. The stopping condition ensures that for all $\omega \in \mathcal{Q}$:
\begin{equation} \label{eq:weight_comparable_hybrid}
    w_{\min} r < W(\omega) \le r.
\end{equation}

By the Open Set Condition, there exists a non-empty bounded open set $V$ such that the images $\{ \psi(V) : \psi \in \mathcal{H} \}$ are disjoint subsets of $V$. Since $V$ is open and bounded, we can find positive real numbers $a_1$ and $a_2$ such that $V$ contains a ball of radius $a_1$ and is contained in a ball of radius $a_2$. The collection $\mathcal{V} = \{ h_\omega(V) : \omega \in \mathcal{Q} \}$ consists of disjoint open sets. Moreover, the attractor is contained in the union of their closures:
\[ A \subseteq \bigcup_{\omega \in \mathcal{Q}} \overline{h_\omega(V)}. \]
We now verify that $\mathcal{V}$ satisfies the conditions of Lemma \ref{lem:hutchinson_counting}.

Consider any $\omega \in \mathcal{Q}$ of length $L$. We decompose the length $L = L_1 + L_2$ and the map $h_\omega = h^{\text{head}} \circ h^{\text{tail}}$ as in the Upper Bound proof.
The head $h^{\text{head}}$ is a similarity with ratio $W(\text{head})$. The tail map $h^{\text{tail}}$ corresponds to a word in the finite set $\mathcal{T} = \bigcup_{j=n-1}^{2(n-1)} \mathcal{I}^j$. For each map $\psi \in \mathcal{T}$, let $\sigma_{\min}(\psi)$ and $\sigma_{\max}(\psi)$ denote the smallest and largest singular values of its linear part. We define the uniform bounds:
\[ c_{\min} = \min_{\psi \in \mathcal{T}} \sigma_{\min}(\psi), \quad c_{\max} = \max_{\psi \in \mathcal{T}} \sigma_{\max}(\psi), \quad \text{and} \quad w_{\min}^{\text{tail}} = \min_{\nu \in \mathcal{T}} W(\nu). \]
Since $\mathcal{T}$ is finite and consists of non-singular maps, $0 < c_{\min} \le c_{\max} < \infty$ and $w_{\min}^{\text{tail}} > 0$. Consequently, for any map $\psi \in \mathcal{T}$ and any ball $B$ of radius $\rho$, the image $\psi(B)$ contains a ball of radius $c_{\min} \rho$ and is contained in a ball of radius $c_{\max} \rho$.

\begin{enumerate}
    \item Inner Ball Condition:
    Since $V$ contains a ball of radius $a_1$, $h^{\text{tail}}(V)$ contains a ball of radius $c_{\min} a_1$. Consequently, $h_\omega(V)$ contains a ball of radius $W(\text{head}) c_{\min} a_1$.
    Using the bound $W(\text{head}) \ge w_{\min} r$:
    \[ W(\text{head}) c_{\min} a_1 \ge (w_{\min} c_{\min} a_1) r. \]
    Let $k_1 = w_{\min} c_{\min} a_1$. Thus, $h_\omega(V)$ contains a ball of radius $k_1 r$.

    \item Outer Ball Condition:
    Since $V$ is contained in a ball of radius $a_2$, $h^{\text{tail}}(V)$ is contained in a ball of radius $c_{\max} a_2$. Consequently, $h_\omega(V)$ is contained in a ball of radius $W(\text{head}) c_{\max} a_2$.
    Using the bound $W(\text{head}) \le r/w_{\min}^{\text{tail}}$:
    \[ W(\text{head}) c_{\max} a_2 \le \left( \frac{c_{\max} a_2}{w_{\min}^{\text{tail}}} \right) r. \]
    Let $k_2 = c_{\max} a_2 (w_{\min}^{\text{tail}})^{-1}$. Thus, $h_\omega(V)$ is contained in a ball of radius $k_2 r$.
\end{enumerate}

The collection $\mathcal{V}$ consists of disjoint open sets. Furthermore, we have established that every set $h_\omega(V) \in \mathcal{V}$ contains a ball of radius $k_1 r$ and is contained in a ball of radius $k_2 r$. Thus, the collection satisfies the hypothesis of Lemma \ref{lem:hutchinson_counting}. Consequently, the number of closures $\overline{h_\omega(V)}$ intersecting the ball $U$ is bounded by a constant $M$.
Since $h_\omega(A) \subset \overline{h_\omega(V)}$, the number of sets $h_\omega(A)$ intersecting $U$ is also bounded by $M$.

The mass of $U$ is bounded by the sum of the masses of these intersecting sets:
\[ \mu(U) \le \sum_{\substack{\omega \in \mathcal{Q} \\ h_\omega(A) \cap U \neq \emptyset}} \mu(h_\omega(A)) = \sum_{\substack{\omega \in \mathcal{Q} \\ h_\omega(A) \cap U \neq \emptyset}} (W(\omega))^s. \]
Since the number of terms in the sum is at most $M$ and $W(\omega) \le r$, we have:
\[ \mu(U) \le M r^s. \]
Since any set $E$ of diameter $d < 1$ is contained in a ball of radius $d$, we have $\mu(E) \le M d^s = M (\text{diam}(E))^s$. Hence by the Mass Distribution Principle (Proposition \ref{prop:mass_distribution}), $\mathcal{H}^s(A) > 0$.
\end{proof}

\begin{example} \label{ex:hybrid_system}
We illustrate Theorem \ref{thm:hybrid_dimension} with a hybrid system on $\mathbb{R}^2$ consisting of two affine maps $\mathcal{F} = \{f_1, f_2\}$ and one similarity map $\mathcal{G} = \{g_1\}$.

Let the affine maps be defined by:
\begin{align*}
    f_1(x, y) &= \left( -\frac{y}{3} - 1, \; x \right), \\
    f_2(x, y) &= \left( -\frac{y}{3} + 1, \; x \right).
\end{align*}
Let the similarity map be defined by:
\begin{align*}
    g_1(x, y) &= \left( \frac{x}{3}, \; \frac{y}{3} \right).
\end{align*}

The attractor generated by this system is visualized in Figure \ref{fig:hybrid_attractor}.

\begin{figure}[htbp]
    \centering
    \includegraphics[width=0.7\textwidth]{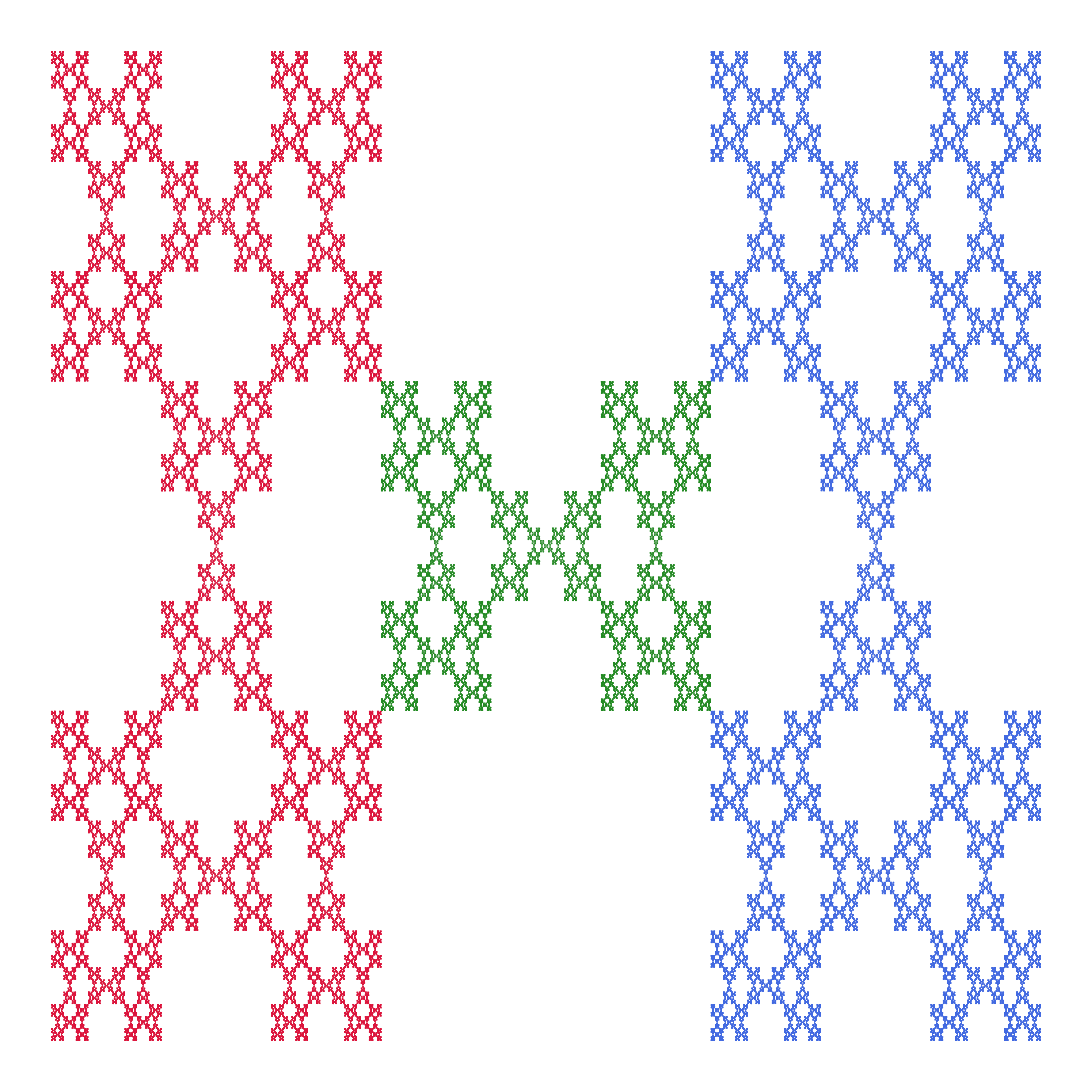}
    \caption{The attractor of the hybrid system defined in Example \ref{ex:hybrid_system}. The red region corresponds to the map $f_1$, the blue region corresponds to the map $f_2$, and the green region corresponds to the map $g_1$.}
    \label{fig:hybrid_attractor}
\end{figure}

Verification of Conditions:
Although $f_1$ and $f_2$ are not similarities, every composition of length $n=2$ (i.e., $f_1^2, f_1 f_2, f_2 f_1, f_2^2$) is a similarity with contraction ratio $c=1/3$. The similarity map $g_1$ is universally aligned as its linear part is a scalar multiple of the identity. The system satisfies the Open Set Condition (OSC).

Dimension Calculation:
We apply the hybrid formula \eqref{eq:hybrid_formula} with $n=2$, affine ratios $c_1=c_2=1/3$, and similarity ratio $r_1=1/3$:
\[
    2\left(\frac{1}{3}\right)^{s/2} + \left(\frac{1}{3}\right)^s = 1.
\]
Substituting $u = (1/3)^{s/2}$ leads to $u^2 + 2u - 1 = 0$. The positive solution is $u = \sqrt{2} - 1$. Solving for $s$:
\[
    3^{s/2} = \sqrt{2} + 1 \implies s = \frac{2 \ln(\sqrt{2} + 1)}{\ln 3} \approx 1.60.
\]
\end{example}

\begin{example} \label{ex:five_map_hybrid}
We consider a hybrid system on $\mathbb{R}^2$ composed of three affine maps $\mathcal{F} = \{f_1, f_2, f_3\}$ and two similarity maps $\mathcal{G} = \{g_1, g_2\}$.

The affine maps are defined by:
\begin{align*}
    f_1(x, y) &= \left( -y, \; \frac{x}{2} \right), \\
    f_2(x, y) &= \left( \frac{y}{2}, \; \frac{x}{4} + 1 \right), \\
    f_3(x, y) &= \left( -\frac{y}{2}, \; -\frac{x}{4} - 1 \right).
\end{align*}
The similarity maps are defined by:
\begin{align*}
    g_1(x, y) &= \left( \frac{x}{4} + 1, \; \frac{y}{4} + 1 \right), \\
    g_2(x, y) &= \left( \frac{x}{4} - 1, \; \frac{y}{4} - 1 \right).
\end{align*}

The attractor generated by this system is visualized in Figure \ref{fig:five_map_attractor}.

\begin{figure}[htbp]
    \centering
    \includegraphics[width=0.7\textwidth]{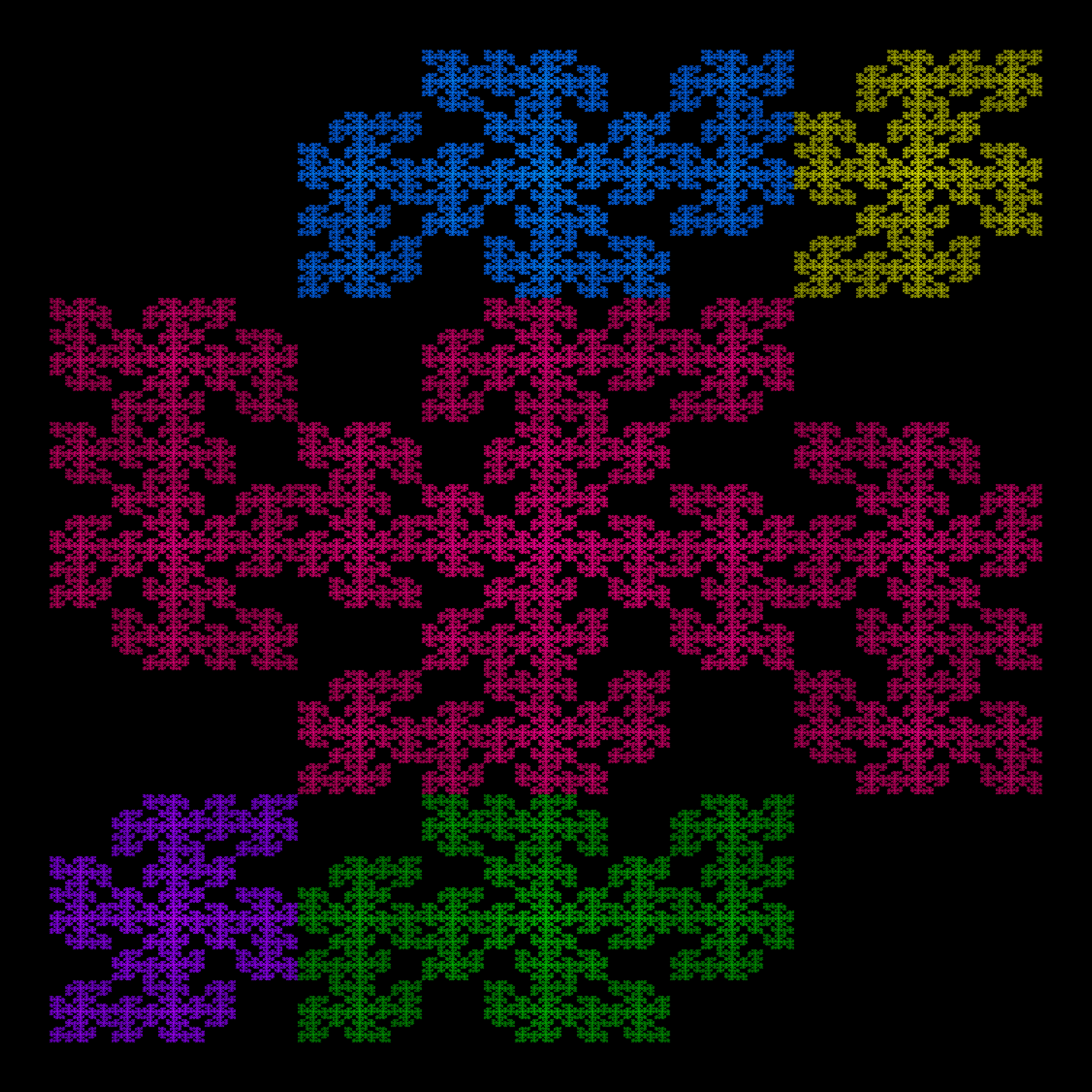}
    \caption{The attractor of the hybrid system defined in Example \ref{ex:five_map_hybrid}. The red structure corresponds to $f_1$, blue to $f_2$, green to $f_3$, while gold and violet correspond to the similarities $g_1$ and $g_2$.}
    \label{fig:five_map_attractor}
\end{figure}

Verification of Conditions:
Although the maps in $\mathcal{F}$ are not similarities individually, every composition of length $n=2$ generated by $\mathcal{F}$ is a similarity contraction. Specifically, the map $f_1^2$ is a similarity with ratio $1/2$. The four mixed compositions involving $f_1$ (i.e., $f_1 \circ f_2$, $f_2 \circ f_1$, $f_1 \circ f_3$, and $f_3 \circ f_1$) are all similarities with ratio $1/4$. The four compositions $f_2^2$, $f_3^2$, $f_2 \circ f_3$, and $f_3 \circ f_2$ are similarities with ratio $1/8$. The maps $g_1$ and $g_2$ are standard similarities with ratio $r_1 = r_2 = 1/4$. The system satisfies the Open Set Condition with the open set $V = (-4/3, 4/3) \times (-4/3, 4/3)$.

Dimension Calculation:
We apply the hybrid formula \eqref{eq:hybrid_formula} with $n=2$. The Hausdorff dimension $s$ is the unique solution to:
\[
    \left(\frac{1}{2}\right)^{s/2} + 2\left(\frac{1}{8}\right)^{s/2} + 2\left(\frac{1}{4}\right)^s = 1.
\]
Substituting $u = (1/2)^{s/2}$, the equation simplifies to the polynomial $2u^4 + 2u^3 + u - 1 = 0$. The relevant positive root is $u \approx 0.5337$, which yields the dimension:
\[
    s = \frac{2 \ln u}{\ln(1/2)} \approx 1.8118.
\]
\end{example}

\section{Topological Properties of Planar $f$-Aligned Systems}

\subsection{Motivating Example: A Class of Planar IFSs}

Before establishing the general structural classification of $f$-aligned systems in the plane, we provide a motivating example that demonstrates the remarkable geometric rigidity of these IFSs. Consider a two-map affine system $\mathcal{F} = \{f, g\}$ on $\mathbb{R}^2$ where the generating functions are given by:
\begin{equation}
    f(x) = \begin{pmatrix} 0 & 0.5 \\ 1 & 0 \end{pmatrix} x + b_f, \quad \quad g(x) = \begin{pmatrix} 0.5 & 0 \\ 0 & 0.5 \end{pmatrix} x + b_g,
\end{equation}
with translation vectors $b_f, b_g \in \mathbb{R}^2$. Let $A$ and $S$ denote the linear parts of $f$ and $g$, respectively. 

We first verify that this system satisfies the algebraic constraints required to apply the exact dimension formula from Theorem \ref{thm:general_dimension}. The matrix $S = 0.5I$ is a uniform scalar multiple of the identity, representing a similarity contraction with ratio $r = 0.5$. Because scalar matrices commute with all square matrices, $S$ trivially commutes with the symmetric matrix $A^\top A$:
\begin{equation}
    S(A^\top A) = (0.5 I) \begin{pmatrix} 1 & 0 \\ 0 & 0.25 \end{pmatrix} = \begin{pmatrix} 0.5 & 0 \\ 0 & 0.125 \end{pmatrix} = (A^\top A)S.
\end{equation}
Thus, $g$ is an $f$-aligned similarity. Next, we examine the iterates of $f$. While $f$ itself is not a similarity, squaring its linear part yields:
\begin{equation}
    A^2 = \begin{pmatrix} 0 & 0.5 \\ 1 & 0 \end{pmatrix} \begin{pmatrix} 0 & 0.5 \\ 1 & 0 \end{pmatrix} = \begin{pmatrix} 0.5 & 0 \\ 0 & 0.5 \end{pmatrix} = 0.5 I.
\end{equation}
Consequently, $f^2$ is a similarity transformation with ratio $c = 0.5$. This classifies $f$ strictly as a $G^2$-similarity contraction ($n=2$). 

Because the system is $f$-aligned and contains a $G^2$-similarity, we can invoke Theorem \ref{thm:general_dimension}. Assuming the open set condition holds, the exact Hausdorff dimension $s$ of the attractor is the unique real solution to the equation $c^{s/2} + r^s = 1$. Substituting $c = 0.5$ and $r = 0.5$ yields:
\begin{equation}
    (0.5)^{s/2} + (0.5)^s = 1.
\end{equation}
Setting $t = (0.5)^{s/2}$ results in the quadratic equation $t^2 + t - 1 = 0$. Taking the positive root gives $t = \frac{\sqrt{5}-1}{2}$. Solving for $s$ provides the exact dimension:
\begin{equation} \label{eq:dim_eval}
    s = \frac{2 \log\left(\frac{\sqrt{5}-1}{2}\right)}{\log(0.5)} \approx 1.3884.
\end{equation}

To observe the physical behavior of this system, we visualize the attractors for six distinct pairs of translation vectors $(b_f, b_g)$.

\begin{figure}[htpb]
    \centering
    \includegraphics[width=0.95\textwidth]{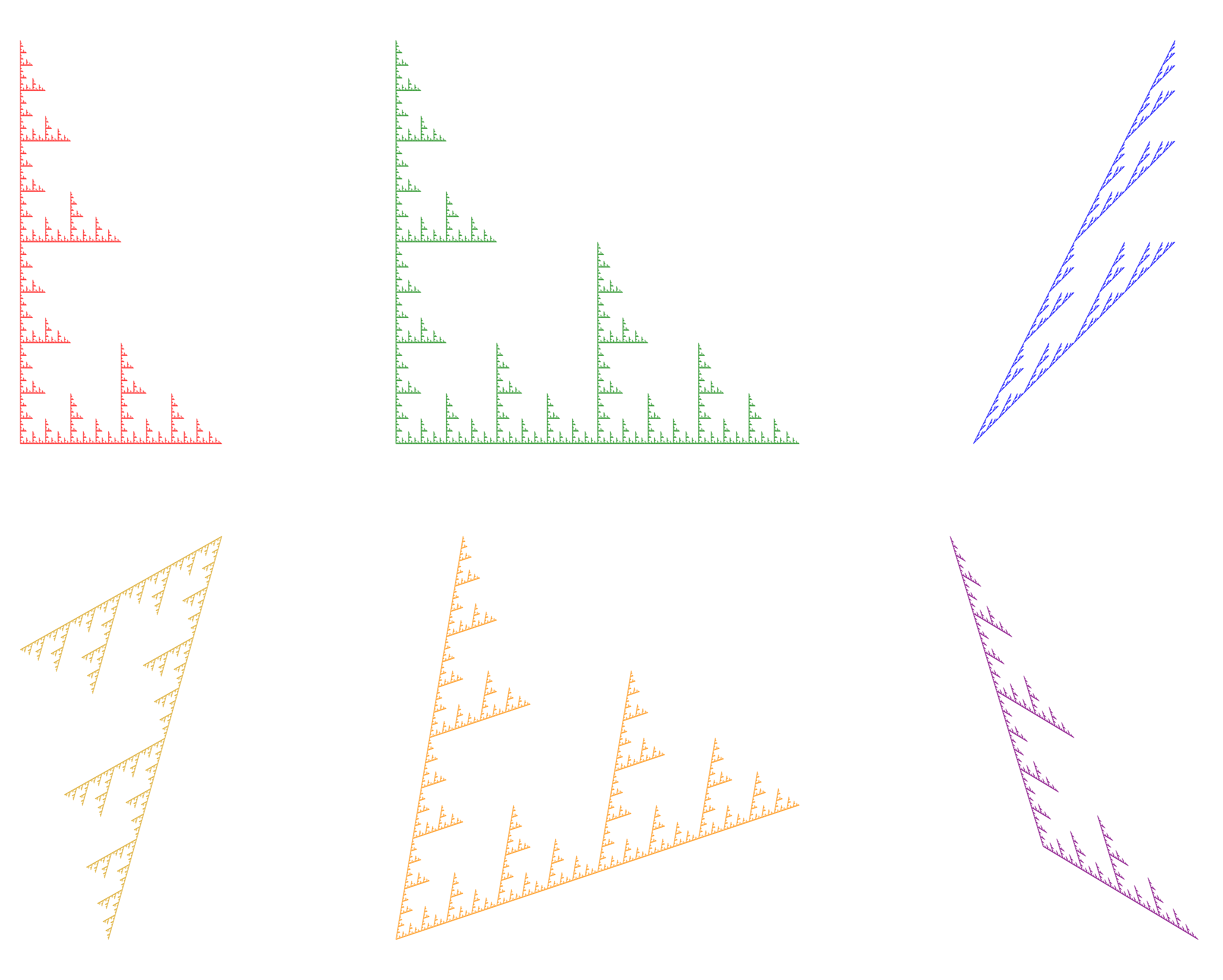}
    \caption{Attractors of the IFS $\{f, g\}$ for various translation vectors: 
(Red) $b_f = (0, 0), b_g = (0, 0.5)$; 
(Green) $b_f = (0, 0.5), b_g = (1, 0.5)$; 
(Blue) $b_f = (-0.5, 0), b_g = (0, 0.5)$; 
(Gold) $b_f = (0.2, -0.2), b_g = (-0.8, -3.2)$; 
(Orange) $b_f = (0, 0), b_g = (3, 1)$; 
(Purple) $b_f = (0, 0.5), b_g = (0.1, 1)$.}
    \label{fig:translation_variations}
\end{figure}

As shown in Figure \ref{fig:translation_variations}, changing the translation vectors moves the fixed points of the maps, drastically altering the overall shape of the attractor. However, the system maintains strict topological invariants. It can be analytically verified that for each pair of translation vectors depicted in Figure \ref{fig:translation_variations}, the IFS satisfies the open set condition. Consequently, by Theorem \ref{thm:general_dimension}, every attractor shown in the figure possesses the exact Hausdorff dimension $s \approx 1.3884$. Furthermore, in every such case, the attractor remains a connected set.

The underlying mechanism enforcing this topological rigidity is the precise algebraic balance between the contraction ratios: $c + r = 0.5 + 0.5 = 1$. It is entirely due to this single parametric constraint that the open set condition and global connectivity are unconditionally guaranteed for any choice of translation vectors $b_f, b_g \in \mathbb{R}^2$ (provided the attractor does not collapse into a line). 

This geometric rigidity extends far beyond the preceding example. In the remainder of this section, we formalize these phenomena. We first provide a complete classification of planar $G^2$-similarity contractions, and then classify the system $\{f, g\}$ to prove exactly when the condition $c + r = 1$ forces separation and connectivity, and when it fails.

\subsection{Classification and Separation Conditions}

\begin{lemma}\label{lem:classification}
Let $f: \mathbb{R}^2 \to \mathbb{R}^2$ be an affine transformation with linear part $A$. If $f$ is a strict $G^2$-similarity contraction, then $A^2 = cI$ or $A^2 = -cI$ for some $c \in (0, 1)$.
\end{lemma}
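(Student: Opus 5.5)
The plan is a direct linear-algebra argument on the $2\times 2$ linear part $A$. It has three steps: record the normal form of $A^2$, use that $A$ commutes with its own square, and rule out the non-scalar possibilities with a determinant check and a commutant computation.

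\textbf{Normal form of $A^2$.} Since $f$ is a $G^2$-similarity contraction, $f^2$ is a similarity with some ratio $c\in(0,1)$. Its linear part $A^2$ therefore satisfies $\|A^2 v\| = c\|v\|$ for all $v$. Hence $A^2 = cQ$ with $Q\in O(2)$, and $A$ is invertible.

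\textbf{Ruling out reflections.} If $\det Q = -1$, then $\det(A^2) = -c^2 < 0$. This contradicts $\det(A^2) = (\det A)^2 \ge 0$. So $Q$ is a rotation $R_\theta$.

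\textbf{Ruling out nontrivial rotations.} If $\theta \in \{0,\pi\}$, then $Q = \pm I$ and we are done. Suppose instead that $\theta\notin\{0,\pi\}$. Since $A$ commutes with $A^2 = cR_\theta$, it commutes with $R_\theta$. In this case $R_\theta$ has two distinct non-real eigenvalues $e^{\pm i\theta}$. Hence its commutant in $M_2(\mathbb{R})$ consists of polynomials in $R_\theta$, i.e.\ the span of $I$ and $R_\theta$, which equals the span of $I$ and $J=\begin{pmatrix}0&-1\\1&0\end{pmatrix}$. So $A = aI + bJ$ for some real $a,b$ not both zero. Such a matrix equals $\sqrt{a^2+b^2}$ times a rotation, so $f$ is itself a similarity. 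Its ratio $\rho$ satisfies $\rho^2 = c < 1$, so $f$ is a similarity contraction. This contradicts strictness, which forbids $f^1$ from being a similarity contraction. Therefore $\theta\in\{0,\pi\}$, i.e.\ $A^2 = cI$ or $A^2=-cI$.

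\textbf{Main obstacle.} The key step is the commutant computation in the rotation case: that only complex-scalar matrices commute with a rotation having non-real eigenvalues. I would justify it by the distinct-eigenvalue argument: any matrix commuting with a diagonalizable matrix with simple spectrum is a polynomial in it. Alternatively, one can check directly that $XR_\theta = R_\theta X$ forces $X = \begin{pmatrix}a&-b\\ b&a\end{pmatrix}$ when $\sin\theta\neq 0$.
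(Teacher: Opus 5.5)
Your proof is correct. Its first two steps match the paper's: you write $A^2 = cQ$ with $Q$ orthogonal, then use $(\det A)^2 \ge 0$ to force $Q = R_\theta$. The routes diverge at the last step.

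The paper applies Cayley--Hamilton to get $\mathrm{tr}(A)\,A = cR_\theta + \det(A)I$ and splits on the trace. If $\mathrm{tr}(A)\neq 0$, then $A$ lies in the span of $I$ and $R_\theta$, so it is conformal, contradicting strictness. Hence $\mathrm{tr}(A)=0$, and then $cR_\theta = -\det(A)I$ forces $R_\theta = \pm I$.

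You instead split on $\theta$. You use that $A$ commutes with $A^2$, and that the centralizer of a non-scalar rotation in $M_2(\mathbb{R})$ is exactly $\{aI+bJ\}$; your direct check $XJ=JX$ settles this cleanly. Both arguments reach the same contradiction: $A$ is conformal, so $f$ is a similarity of ratio $\sqrt{c}<1$. You state more explicitly than the paper that this ratio is less than $1$, which is precisely what strictness rules out.

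The paper's version yields, for free, $\mathrm{tr}(A)=0$ and $\det(A)=\mp c$ according as $A^2=\pm cI$. That sign pairing is built into the hypotheses of Theorems~\ref{thm:neg_cI} and \ref{thm:pos_cI}. Your argument does not produce these facts directly. They follow afterwards, though: $A$ is non-scalar, since otherwise $f$ would be a similarity, so its minimal polynomial must be $x^2\mp c$.

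Your version makes the structural reason transparent: a matrix commutes with its own square, and a rotation with simple non-real spectrum admits only conformal matrices in its commutant.
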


\begin{proof}
Since $f^2$ is a similarity contraction, its linear part $A^2$ must be a similarity matrix. Thus, there exists a constant $c \in (0, 1)$ and a $2 \times 2$ orthogonal matrix $U$ such that $A^2 = cU$. 

Taking the determinant yields $(\det A)^2 = c^2 \det(U)$. Since $A$ is a real matrix, $(\det A)^2 \ge 0$. Because $c > 0$, this strictly requires $\det(U) \ge 0$. The only orthogonal matrices in $\mathbb{R}^2$ with a positive determinant are rotation matrices. Therefore, $\det(U) = 1$, and $U$ is a rotation matrix $R_\theta$, giving $A^2 = c R_\theta$. This also implies $\det(A) = \pm c$.

By the Cayley-Hamilton theorem, the $2 \times 2$ matrix $A$ satisfies its characteristic equation:
\begin{equation} \label{eq:cayley}
    A^2 - \text{tr}(A)A + \det(A)I = 0.
\end{equation}
Substituting $A^2 = c R_\theta$ into Equation \ref{eq:cayley} yields:
\begin{equation} \label{eq:trace}
    \text{tr}(A)A = c R_\theta + \det(A)I.
\end{equation}

Assume for contradiction that $\text{tr}(A) \neq 0$. Then $A = \frac{1}{\text{tr}(A)} (c R_\theta + \det(A)I)$. In $\mathbb{R}^2$, both the identity matrix $I$ and any rotation matrix $R_\theta$ are conformal matrices of the form $\left(\begin{smallmatrix} x & -y \\ y & x \end{smallmatrix}\right)$. Since the set of conformal matrices is closed under addition and scalar multiplication, $A$ must also be a conformal matrix. A conformal matrix in $\mathbb{R}^2$ corresponds to a similarity transformation, which contradicts the hypothesis that $f$ is a strict $G^2$-similarity contraction (i.e., $f$ is not a similarity). 

Therefore, it must be that $\text{tr}(A) = 0$. Substituting this back into Equation \ref{eq:trace} gives:
\begin{equation*}
    0 = c R_\theta + \det(A)I \implies c R_\theta = -\det(A)I.
\end{equation*}
Since the right side is a strictly diagonal matrix, $R_\theta$ must also be diagonal. The only diagonal rotation matrices in $\mathbb{R}^2$ are $R_0 = I$ and $R_\pi = -I$. 
Consequently, we have exactly two possibilities:
\begin{enumerate}
    \item If $\det(A) = -c$, then $c R_\theta = cI$, which implies $A^2 = cI$.
    \item If $\det(A) = c$, then $c R_\theta = -cI$, which implies $A^2 = -cI$.
\end{enumerate}
Thus, every strict $G^2$-similarity contraction in the plane falls into one of these two algebraic forms.
\end{proof}

\begin{lemma}\label{lem:f_aligned_classification}
Let $f: \mathbb{R}^2 \to \mathbb{R}^2$ be a strict $G^2$-similarity contraction with linear part $A$. If $g: \mathbb{R}^2 \to \mathbb{R}^2$ is an $f$-aligned similarity transformation with ratio $r > 0$ and linear part $S$, then $S$ is either a scalar multiple of the identity ($S = \pm rI$) or a scaled reflection across an eigenvector of $A^\top A$.
\end{lemma}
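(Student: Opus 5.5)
The plan is to diagonalize the symmetric matrix $A^\top A$ in an orthonormal eigenbasis and read off the form of $S$ from the commutation relation. The argument has three steps: show that $A^\top A$ has two distinct eigenvalues, so that its eigenspaces are orthogonal lines; use $S(A^\top A) = (A^\top A)S$ to show that $S$ preserves each line; and combine this with $S = rQ$, $Q$ orthogonal, to list the possibilities.

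\textbf{Step 1 (distinct eigenvalues).} This is the step I expect to need the most care, since it is where strictness enters. Suppose $A^\top A = \lambda I$ for some $\lambda > 0$. Then $A = \sqrt{\lambda}\,Q_0$ with $Q_0$ orthogonal, so $f$ is a similarity with ratio $\sqrt{\lambda}$. Consequently $f^2$ is a similarity with ratio $\lambda$. Since $f^2$ is by hypothesis a similarity contraction with ratio $c$, we get $\lambda = c < 1$. Hence $f$ itself is a similarity contraction, which contradicts the assumption that $f$ is a \emph{strict} $G^2$-similarity contraction (take $k = 1$ in the definition). Therefore $A^\top A$, a real symmetric positive definite $2\times 2$ matrix, has two distinct eigenvalues $\lambda_1 \neq \lambda_2$. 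Its eigenspaces are the two orthogonal lines spanned by unit eigenvectors $e_1 \perp e_2$.

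\textbf{Step 2 (invariance).} If $(A^\top A)e_i = \lambda_i e_i$, the commutation relation gives $(A^\top A)(Se_i) = S(A^\top A)e_i = \lambda_i (Se_i)$. So $Se_i$ lies in the $\lambda_i$-eigenspace, which is one-dimensional, and therefore $Se_i = \mu_i e_i$ for some real $\mu_i$. In the orthonormal basis $\{e_1, e_2\}$, the matrix $S$ is thus $\mathrm{diag}(\mu_1, \mu_2)$.

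\textbf{Step 3 (enumeration).} Since $g$ is a similarity with ratio $r$, we have $\|Sv\| = r\|v\|$ for all $v$. Applying this to $e_1$ and $e_2$ gives $|\mu_1| = |\mu_2| = r$, which leaves four sign patterns.
\begin{itemize}
    \item $(\mu_1, \mu_2) = (r, r)$ gives $S = rI$.
    \item $(\mu_1, \mu_2) = (-r, -r)$ gives $S = -rI$.
    \item $(\mu_1, \mu_2) = (r, -r)$ gives $r$ times the orthogonal reflection fixing the line $\mathbb{R}e_1$, i.e.\ a scaled reflection across the eigenvector $e_1$.
    \item $(\mu_1, \mu_2) = (-r, r)$ gives the scaled reflection across $e_2$.
\end{itemize}
Because the basis is orthonormal, these diagonal forms in the $\{e_1,e_2\}$ basis are exactly $\pm rI$ and scaled reflections in the standard coordinates as well. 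This yields the statement of Lemma \ref{lem:f_aligned_classification}.

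As an optional consistency check, Lemma \ref{lem:classification} gives $A^2 = \pm cI$, which constrains the eigenvectors of $A^\top A$. This is not needed for the proof.
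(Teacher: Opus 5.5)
Your proposal is correct and follows essentially the same route as the paper. Both arguments use strictness to show $A^\top A$ is not scalar, then use the commutation relation to make $S$ diagonal in an orthonormal eigenbasis of $A^\top A$ (the paper phrases this as $Q^\top S Q$ commuting with $\Lambda$), and finally use the similarity ratio to force the diagonal entries to be $\pm r$. Your Step 1 is slightly more careful than the paper's: you check explicitly that if $f$ were a similarity it would be a similarity contraction, and that is precisely what strictness excludes.
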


\begin{proof}
Since $f$ is a strict $G^2$-similarity contraction, it is not a uniform similarity transformation. Consequently, the symmetric matrix $A^\top A$ is not a scalar multiple of the identity, meaning it possesses two distinct positive real eigenvalues, $\lambda_1 \neq \lambda_2$.

By the Spectral Theorem, there exists a $2 \times 2$ orthogonal matrix $Q$ whose columns are the eigenvectors of $A^\top A$, such that $A^\top A = Q \Lambda Q^\top$, where $\Lambda = \text{diag}(\lambda_1, \lambda_2)$.

By Definition \ref{def:f_aligned}, $g$ is $f$-aligned, so $S$ commutes with $A^\top A$:
\begin{equation*}
    S(A^\top A) = (A^\top A)S.
\end{equation*}
Substituting the spectral decomposition yields $S Q \Lambda Q^\top = Q \Lambda Q^\top S$. Multiplying by $Q^\top$ on the left and $Q$ on the right gives:
\begin{equation*}
    (Q^\top S Q) \Lambda = \Lambda (Q^\top S Q).
\end{equation*}
Let $D = Q^\top S Q$. Since $D$ commutes with the diagonal matrix $\Lambda$, which has strictly distinct diagonal entries, $D$ itself must be a diagonal matrix. 

Because $D$ is real and diagonal, it is symmetric ($D^\top = D$). It follows that $S = Q D Q^\top$ is also symmetric ($S^\top = S$). 

Since $g$ is a similarity transformation with ratio $r$, its linear part must satisfy $S^\top S = r^2 I$. Because $S$ is symmetric, this strictly reduces to $S^2 = r^2 I$. Consequently, the eigenvalues of $S$ must be drawn from the set $\{r, -r\}$.

Since $D$ is a diagonal matrix containing the eigenvalues of $S$, there are exactly two possibilities for $D$:
\begin{enumerate}
    \item $D = \pm rI$. In this case, $S = Q(\pm rI)Q^\top = \pm rI$. Geometrically, this corresponds to a uniform homothety ($rI$) or a point reflection ($-rI$).
    \item $D = \left(\begin{smallmatrix} r & 0 \\ 0 & -r \end{smallmatrix}\right)$ or $D = \left(\begin{smallmatrix} -r & 0 \\ 0 & r \end{smallmatrix}\right)$. In this case, $S = Q D Q^\top$ is a symmetric matrix with eigenvalues $r$ and $-r$. Geometrically, this corresponds to a reflection across one of the principal axes defined by the column vectors of $Q$ (the eigenvectors of $A^\top A$), scaled by the ratio $r$.
\end{enumerate}
Thus, the linear part $S$ of any planar $f$-aligned similarity must fall into one of these specific algebraic forms.
\end{proof}

\begin{theorem}\label{thm:neg_cI}
Let $\mathcal{F} = \{f, g\}$ be an iterated function system on $\mathbb{R}^2$ consisting of affine transformations satisfying the following conditions:
\begin{enumerate}
    \item $f(x) = Ax + b_f$, where $b_f \in \mathbb{R}^2$, $\det(A) > 0$, and $A^2 = -cI$ for some $c \in (0, 1)$.
    \item $g(x) = Sx + b_g$, where $b_g \in \mathbb{R}^2$ and $S = \pm rI$ for some $r \in (0, 1)$. 
    \item $c + r \le 1$.
    \item The respective fixed points $z_f$ and $z_g$ of $f$ and $g$ are distinct.
\end{enumerate}
Then $\mathcal{F}$ satisfies the open set condition.
\end{theorem}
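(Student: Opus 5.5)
The plan is to reduce to a self-similar model by an affine change of coordinates, and then to write down an explicit open set for that model.

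\textbf{Step 1: conjugate $A$ to a rotation.} Since $A^2=-cI$, the matrix $A$ has eigenvalues $\pm i\sqrt{c}$. Indeed $\operatorname{tr}(A)=0$ and $\det(A)=c$, which is consistent with the hypothesis $\det(A)>0$. By the real canonical form there is an invertible real matrix $P$ with $P^{-1}AP=\sqrt{c}\,J$, where $J$ is the rotation by $90^\circ$. If $P$ happens to produce the rotation by $-90^\circ$, I compose $P$ with a reflection to fix the orientation convention. The scalar matrix $S=\pm rI$ commutes with every $P$, so it is unchanged by this conjugation.

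The open set condition is invariant under affine conjugacy. If $\varphi$ is an affine bijection and $U$ works for $\{\varphi^{-1}f\varphi,\varphi^{-1}g\varphi\}$, then $\varphi(U)$ works for $\{f,g\}$. Distinctness of fixed points is also preserved. So in suitable coordinates both maps are similarities. Identifying $\mathbb{R}^2$ with $\mathbb{C}$, they become $F(z)=i\sqrt{c}\,z+t$ and $G(z)=\pm r z+u$. The theorem thus reduces to a statement about a self-similar pair.

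\textbf{Step 2: normalize the fixed points.} I can further conjugate by complex-affine maps $\psi(z)=az+\beta$ with $a\in\mathbb{C}\setminus\{0\}$. These commute with multiplication by $i\sqrt{c}$ and by $\pm r$, so they preserve the linear parts of $F$ and $G$. A complex-affine map can send any two distinct points to any two prescribed distinct points. Since $z_f\neq z_g$ by hypothesis (4), the translations $t,u$ are determined by the fixed points, and I may therefore choose any convenient model having the required linear parts and distinct fixed points.

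\textbf{Step 3: the explicit open set.} I take the model
\[ F(x,y)=(-\sqrt{c}\,y+c,\ \sqrt{c}\,x), \]
together with
\[ G(x,y)=(rx+c,\ ry) \quad\text{if } S=rI, \qquad G(x,y)=(-rx+c+r,\ -ry+r\sqrt{c}) \quad\text{if } S=-rI. \]
The candidate open set is the rectangle $R=(0,1)\times(0,\sqrt{c})$. A direct computation should give $F(R)=(0,c)\times(0,c)$, and in both sign cases $G(R)=(c,c+r)\times(0,r\sqrt{c})$.

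Both images lie in $R$: for $F(R)$ this uses $c<1$ and $c\le\sqrt{c}$, and for $G(R)$ it uses the hypothesis $c+r\le 1$, which is exactly where condition (3) enters. The two images are disjoint because their first-coordinate projections $(0,c)$ and $(c,c+r)$ are disjoint. I also need to confirm that the fixed points of the model are distinct, namely $c/(1-i\sqrt{c})$ for $F$ and $c/(1-r)$, resp.\ $(c+r+ir\sqrt{c})/(1+r)$, for $G$. This holds because the first is non-real while the others are real, resp.\ have imaginary part $r\sqrt{c}/(1+r)$ that does not match.

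\textbf{Expected obstacle.} The main difficulty I anticipate is the bookkeeping in Steps 1–2. One must verify that the conjugating map can be chosen to realize simultaneously the canonical linear part of $f$ (with the correct rotation direction), the unchanged scalar $S$, and prescribed fixed points. The crucial point is that $S$ is scalar, so it commutes with both $P$ and $a$.

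If the fixed-point check in the $S=-rI$ model turns out to be delicate, I would instead normalize only $z_f=c/(1-i\sqrt{c})$ and show directly that every other choice of $z_g$ is reached by a rotation–dilation about $z_f$. A rotation–dilation about $z_f$ preserves $F$ exactly, so it moves only the fixed point of $G$.
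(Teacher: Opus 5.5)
Your proof is correct, and it takes a genuinely different route from the paper.

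\textbf{How the paper argues.} The paper never changes coordinates. It picks a covector $n$ adapted to the fixed points: $n \perp A(z_f - z_g)$ when $S = rI$, and the oblique condition $\langle n, A(z_f-z_g)\rangle = \lambda\langle n, z_f - z_g\rangle$ with $\lambda = c(c-r)/(1-cr)$ when $S=-rI$. It then sets $m = A^\top n$ and verifies by direct computation that an explicitly bounded parallelogram $\{\alpha_1<\pi_n<\alpha_2,\ \beta_1<\pi_m<\beta_2\}$ works.

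\textbf{How your route differs.} After conjugating $A$ to $\sqrt{c}J$, both maps are complex-affine. The complex-affine group acts transitively on pairs of maps with the prescribed linear parts and distinct fixed points. So a single rectangle for each sign of $S$ settles the theorem.

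Your viewpoint also explains what the paper's computation is doing. In the coordinates $(\pi_n, \pi_m/\sqrt{c})$, $f$ is a rotation-dilation and $g$ has linear part $\pm rI$. There the paper's parallelogram is a rectangle with the same aspect ratio $1:\sqrt{c}$ as your $R$. The delicate choice of $n$, including $\lambda$, is just a fixed-point normalization done by hand.

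\textbf{What the paper's version buys.} It gives an explicit $O$ in the original coordinates, which the worked examples use. It also places $\pi_n(f(O))$ and $\pi_n(g(O))$ at opposite ends of $\pi_n(O)$, and this is reused for strong separation when $c+r<1$ in Theorem~\ref{thm:connectivity}. Your model instead places $G(R)$ flush against $F(R)$ along $x=c$. Replacing the constant $c$ in the first coordinate of $G$ by $1-r$, in both sign cases, would restore the opposite-end placement.

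\textbf{Two small slips to correct.}
\begin{enumerate}
\item $F(R) = (0,c)\times(0,\sqrt{c})$, not $(0,c)\times(0,c)$, because the second coordinate is $\sqrt{c}\,x$ with $x\in(0,1)$. It still lies in $R$, using only $c<1$, and disjointness from $G(R)$ via first coordinates is unaffected.
\item In the case $S=-rI$, the imaginary parts $c\sqrt{c}/(1+c)$ and $r\sqrt{c}/(1+r)$ of the two fixed points coincide when $r=c$. Compare real parts instead: $c/(1+c) = (c+r)/(1+r)$ would force $c^2+r=0$, which is impossible.
\end{enumerate}
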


\begin{proof}
Let $z_f, z_g \in \mathbb{R}^2$ denote the unique fixed points of $f$ and $g$, respectively. By hypothesis, $v = z_f - z_g \neq 0$. We first show that $v$ and $Av$ are linearly independent. Assume there exists $\gamma \in \mathbb{R}$ such that $Av = \gamma v$. Then $A^2 v = \gamma^2 v$. Since $A^2 = -cI$, this gives $-cv = \gamma^2 v$, so $\gamma^2 = -c$. Since $c \in (0, 1)$, this has no real solution, a contradiction. Thus, $v$ and $Av$ are linearly independent.

We consider two cases based on $S$.

\vspace{0.5em}
\noindent\textbf{Case 1: $S = rI$}
\vspace{0.5em}

Here, $g(x) = rx + (1 - r)z_g$. Choose $n \in \mathbb{R}^2 \setminus \{0\}$ such that $\langle n, A(z_f - z_g) \rangle = 0$. By linear independence, $\langle n, z_f - z_g \rangle \neq 0$. Without loss of generality, assume $\langle n, z_f - z_g \rangle > 0$.

Define $m = A^\top n$. Since $A^\top$ has no real eigenvalues, $m$ is not a scalar multiple of $n$, so $n$ and $m$ are linearly independent. Projecting the fixed points onto $m$ gives $\langle m, z_f - z_g \rangle = \langle n, A(z_f - z_g) \rangle = 0$. Thus, there exists $k \in \mathbb{R}$ such that $\langle m, z_f \rangle = \langle m, z_g \rangle = k$. Additionally, $A^\top m = (A^\top)^2 n = -cn$.

Define $\pi_n(x) = \langle n, x \rangle$ and $\pi_m(x) = \langle m, x \rangle$. Using $b_f = z_f - Az_f$, the projections of $f(x)$ are:
\begin{align*}
    \pi_n(f(x)) &= \langle A^\top n, x \rangle + \pi_n(z_f - Az_f) = \pi_m(x) - k + \pi_n(z_f), \\
    \pi_m(f(x)) &= \langle A^\top m, x \rangle + \pi_m(z_f - Az_f) = -c\pi_n(x) + k + c\pi_n(z_f).
\end{align*}
For $g(x)$, the projections are:
\begin{align*}
    \pi_n(g(x)) &= r \pi_n(x) + (1 - r)\pi_n(z_g), \\
    \pi_m(g(x)) &= r \pi_m(x) + (1 - r)k.
\end{align*}

Define the open parallelogram $O = \{ x \in \mathbb{R}^2 \mid \alpha_1 < \pi_n(x) < \alpha_2 \text{ and } \beta_1 < \pi_m(x) < \beta_2 \}$. Let $L_n = (1+c)(\pi_n(z_f) - \pi_n(z_g)) > 0$. Set the boundaries as $\alpha_1 = \pi_n(z_g)$, $\alpha_2 = \alpha_1 + L_n$, $\beta_1 = k - c(\alpha_2 - \pi_n(z_f))$, and $\beta_2 = k - c(\alpha_1 - \pi_n(z_f))$. 

Note that $\beta_2 - \beta_1 = cL_n > 0$. For $x \in O$, $\pi_n(g(O)) = (\alpha_1, \alpha_1 + rL_n)$. Since $k \in (\beta_1, \beta_2)$ and $r \in (0, 1)$, $\pi_m(g(O)) \subset (\beta_1, \beta_2)$.

For $f$, since $\pi_n(x) \in (\alpha_1, \alpha_2)$ and $-c < 0$, $\pi_m(f(O)) = (\beta_1, \beta_2)$. Since $\pi_m(x) \in (\beta_1, \beta_2)$, the bounds for $\pi_n(f(x))$ are $\inf \pi_n(f(O)) = \beta_1 - k + \pi_n(z_f) = -c\alpha_2 + (1+c)\pi_n(z_f)$. Using $\alpha_2 = \alpha_1 + L_n$ and $(1+c)\pi_n(z_f) = L_n + (1+c)\alpha_1$, this simplifies to $\alpha_1 + (1-c)L_n = \alpha_2 - cL_n$. The supremum is $\sup \pi_n(f(O)) = \beta_2 - k + \pi_n(z_f) = \alpha_2$. Thus, $\pi_n(f(O)) = (\alpha_2 - cL_n, \alpha_2)$.

Because $c + r \le 1$, we have $rL_n \le (1-c)L_n$. This ensures $\sup \pi_n(g(O)) = \alpha_1 + rL_n \le \alpha_2 - cL_n = \inf \pi_n(f(O))$. Therefore, $\pi_n(g(O))$ and $\pi_n(f(O))$ are disjoint sub-intervals of $(\alpha_1, \alpha_2)$, yielding $f(O) \cap g(O) = \emptyset$ and $f(O) \cup g(O) \subset O$.

\vspace{0.5em}
\noindent\textbf{Case 2: $S = -rI$}
\vspace{0.5em}

Here, $g(x) = -r(x - z_g) + z_g$. We construct an oblique projection axis by defining the scalar:
\begin{equation*}
    \lambda = \frac{c(c-r)}{1-cr}.
\end{equation*}
Choose $n \in \mathbb{R}^2 \setminus \{0\}$ such that $\langle n, A(z_f - z_g) \rangle = \lambda \langle n, z_f - z_g \rangle$. Since $v$ and $Av$ are linearly independent, $\langle n, z_f - z_g \rangle \neq 0$. Assume $\langle n, z_f - z_g \rangle > 0$.

Define $m = A^\top n$. Projecting onto $m$ gives $\langle m, z_f - z_g \rangle = \lambda \langle n, z_f - z_g \rangle$. Thus, $\langle m, z_f \rangle - \lambda \langle n, z_f \rangle = \langle m, z_g \rangle - \lambda \langle n, z_g \rangle = k$ for some $k \in \mathbb{R}$. Also, $A^\top m = -cn$.

The projections of $f$ are:
\begin{align*}
    \pi_n(f(x)) &= \pi_m(x) - k + (1-\lambda)\pi_n(z_f), \\
    \pi_m(f(x)) &= -c\pi_n(x) + k + (\lambda+c)\pi_n(z_f).
\end{align*}
For $g(x)$:
\begin{align*}
    \pi_n(g(x)) &= -r\pi_n(x) + (1+r)\pi_n(z_g), \\
    \pi_m(g(x)) &= -r\pi_m(x) + (1+r)\pi_m(z_g).
\end{align*}

Define $O$ by the bounds $\alpha_2 = \frac{1}{1-cr}((1+c)\pi_n(z_f) - c(1+r)\pi_n(z_g))$, $\alpha_1 = (1+r)\pi_n(z_g) - r\alpha_2$, $\beta_1 = -c\alpha_2 + k + (\lambda+c)\pi_n(z_f)$, and $\beta_2 = -c\alpha_1 + k + (\lambda+c)\pi_n(z_f)$.

Note that $\alpha_2 - \alpha_1 = \frac{(1+r)(1+c)}{1-cr}(\pi_n(z_f) - \pi_n(z_g)) > 0$. Let $L_n = \alpha_2 - \alpha_1$. Then $\beta_2 - \beta_1 = cL_n > 0$.

For $f$, since $-c < 0$, $\pi_m(f(O)) = (\beta_1, \beta_2)$. The primary projection yields $\inf \pi_n(f(O)) = \beta_1 - k + (1-\lambda)\pi_n(z_f) = -c\alpha_2 + (1+c)\pi_n(z_f)$. Using $\alpha_1 = (1+r)\pi_n(z_g) - r\alpha_2$, we rewrite $\alpha_2$ to obtain $(1+c)\pi_n(z_f) = \alpha_2 + c\alpha_1$. Substituting this yields $\inf \pi_n(f(O)) = -c\alpha_2 + \alpha_2 + c\alpha_1 = \alpha_2 - cL_n$. The supremum is $\sup \pi_n(f(O)) = \alpha_2$. Thus, $\pi_n(f(O)) = (\alpha_2 - cL_n, \alpha_2)$.

For $g$, $\inf \pi_n(g(O)) = -r\alpha_2 + (1+r)\pi_n(z_g) = \alpha_1$ and $\sup \pi_n(g(O)) = -r\alpha_1 + (1+r)\pi_n(z_g) = -r\alpha_1 + \alpha_1 + r\alpha_2 = \alpha_1 + rL_n$. Thus, $\pi_n(g(O)) = (\alpha_1, \alpha_1 + rL_n)$.

Because $c + r \le 1$, we have $\alpha_1 + rL_n \le \alpha_2 - cL_n$. This guarantees that $\pi_n(g(O))$ and $\pi_n(f(O))$ are disjoint sub-intervals of $(\alpha_1, \alpha_2)$.

For the dual axis, $\inf \pi_m(g(O)) = -r\beta_2 + (1+r)\pi_m(z_g)$. Using $\pi_m(z_g) = k + \lambda \pi_n(z_g)$ and $\lambda = \frac{c(c-r)}{1-cr}$, this algebraically strictly reduces to $\beta_1$. This directly implies $(1+r)\pi_m(z_g) = \beta_1 + r\beta_2$. The upper bound is $\sup \pi_m(g(O)) = -r\beta_1 + (1+r)\pi_m(z_g) = -r\beta_1 + \beta_1 + r\beta_2 = \beta_1 + r(\beta_2 - \beta_1) = \beta_1 + rcL_n < \beta_2$. Thus, $\pi_m(g(O)) \subset (\beta_1, \beta_2)$.

In both cases, there exists a non-degenerate open set $O$ such that $f(O) \cup g(O) \subset O$ and $f(O) \cap g(O) = \emptyset$, satisfying the open set condition.
\end{proof}

We provide two examples satisfying the hypotheses of Theorem \ref{thm:neg_cI}, corresponding to the cases $S = rI$ and $S = -rI$.

\begin{example}[$S = rI$] \label{ex:neg_cI_homothety}
Let $\mathcal{F} = \{f, g\}$ be the IFS on $\mathbb{R}^2$ given by:
\begin{align*}
    f(x) &= \begin{pmatrix} 0.4 & -0.6 \\ 1.1 & -0.4 \end{pmatrix} x, \\
    g(x) &= \begin{pmatrix} 0.5 & 0 \\ 0 & 0.5 \end{pmatrix} x + \begin{pmatrix} 1.0 \\ 0.5 \end{pmatrix}.
\end{align*}
Let $A$ and $S$ be the linear parts of $f$ and $g$. We have:
\begin{equation*}
    A^2 = \begin{pmatrix} -0.5 & 0 \\ 0 & -0.5 \end{pmatrix} = -0.5 I.
\end{equation*}
Thus, $A^2 = -cI$ with $c = 0.5$. Furthermore, $\det(A) = 0.5 > 0$, and $S = rI$ with $r = 0.5$. The parameter constraint $c + r \le 1$ holds perfectly as $0.5 + 0.5 = 1$.

The fixed points are $z_f = (0, 0)^\top$ and $z_g = (2, 1)^\top$. Since $z_f \neq z_g$, the hypotheses of Theorem \ref{thm:neg_cI} are satisfied.

To explicitly construct the bounding open set $O$, we determine the invariant dual axes. The connecting vector is $v = z_f - z_g = (-2, -1)^\top$. We choose a normal vector $n = (-9, 1)^\top$ such that $\langle n, Av \rangle = 0$. The corresponding dual axis is $m = A^\top n = (-2.5, 5)^\top$. 

The projections of the fixed points onto these axes yield $\pi_n(z_f) = 0$, $\pi_n(z_g) = -17$, and $k = \pi_m(z_f) = 0$. Applying the formulas derived in Theorem \ref{thm:neg_cI}, the primary projection length is $L_n = (1+c)(\pi_n(z_f) - \pi_n(z_g)) = 1.5(17) = 25.5$. The exact affine boundaries evaluate to:
\begin{align*}
    \alpha_1 &= \pi_n(z_g) = -17, \\
    \alpha_2 &= \alpha_1 + L_n = 8.5, \\
    \beta_1 &= k - c(\alpha_2 - \pi_n(z_f)) = 0 - 0.5(8.5) = -4.25, \\
    \beta_2 &= k - c(\alpha_1 - \pi_n(z_f)) = 0 - 0.5(-17) = 8.5.
\end{align*}
Thus, the canonical open set is exactly defined in the dual abstract space as:
\begin{equation*}
    O_{\pi} = \{ x \in \mathbb{R}^2 \mid -17 < \pi_n(x) < 8.5 \quad \text{and} \quad -4.25 < \pi_m(x) < 8.5 \}.
\end{equation*}

To map this abstract parallelogram back to standard Euclidean space, we construct the projection matrix $P$ whose rows are $n^\top$ and $m^\top$, and compute its inverse:
\begin{equation*}
    P = \begin{pmatrix} -9 & 1 \\ -2.5 & 5 \end{pmatrix} 
    \implies 
    P^{-1} = \frac{1}{-42.5} \begin{pmatrix} 5 & -1 \\ 2.5 & -9 \end{pmatrix}.
\end{equation*}
Multiplying the four boundary corners of $O_{\pi}$ by $P^{-1}$ yields their exact Cartesian coordinates in $\mathbb{R}^2$:
\begin{align*}
    v_1 &= P^{-1} \begin{pmatrix} -17 \\ -4.25 \end{pmatrix} = \begin{pmatrix} 1.9 \\ 0.1 \end{pmatrix}, & 
    v_2 &= P^{-1} \begin{pmatrix} 8.5 \\ -4.25 \end{pmatrix} = \begin{pmatrix} -1.1 \\ -1.4 \end{pmatrix}, \\
    v_3 &= P^{-1} \begin{pmatrix} 8.5 \\ 8.5 \end{pmatrix} = \begin{pmatrix} -0.8 \\ 1.3 \end{pmatrix}, & 
    v_4 &= P^{-1} \begin{pmatrix} -17 \\ 8.5 \end{pmatrix} = \begin{pmatrix} 2.2 \\ 2.8 \end{pmatrix}.
\end{align*}
The polygon defined by the vertices $\{v_1, v_2, v_3, v_4\}$ constitutes the bounding set $O$. The resulting attractor and this rigorously derived Euclidean bounding box are plotted alongside each other in Figure \ref{fig:example_3_1}, visually confirming that $f(O) \cap g(O) = \emptyset$ and $f(O) \cup g(O) \subset O$.

\begin{figure}[htpb]
    \centering
    \includegraphics[width=\textwidth]{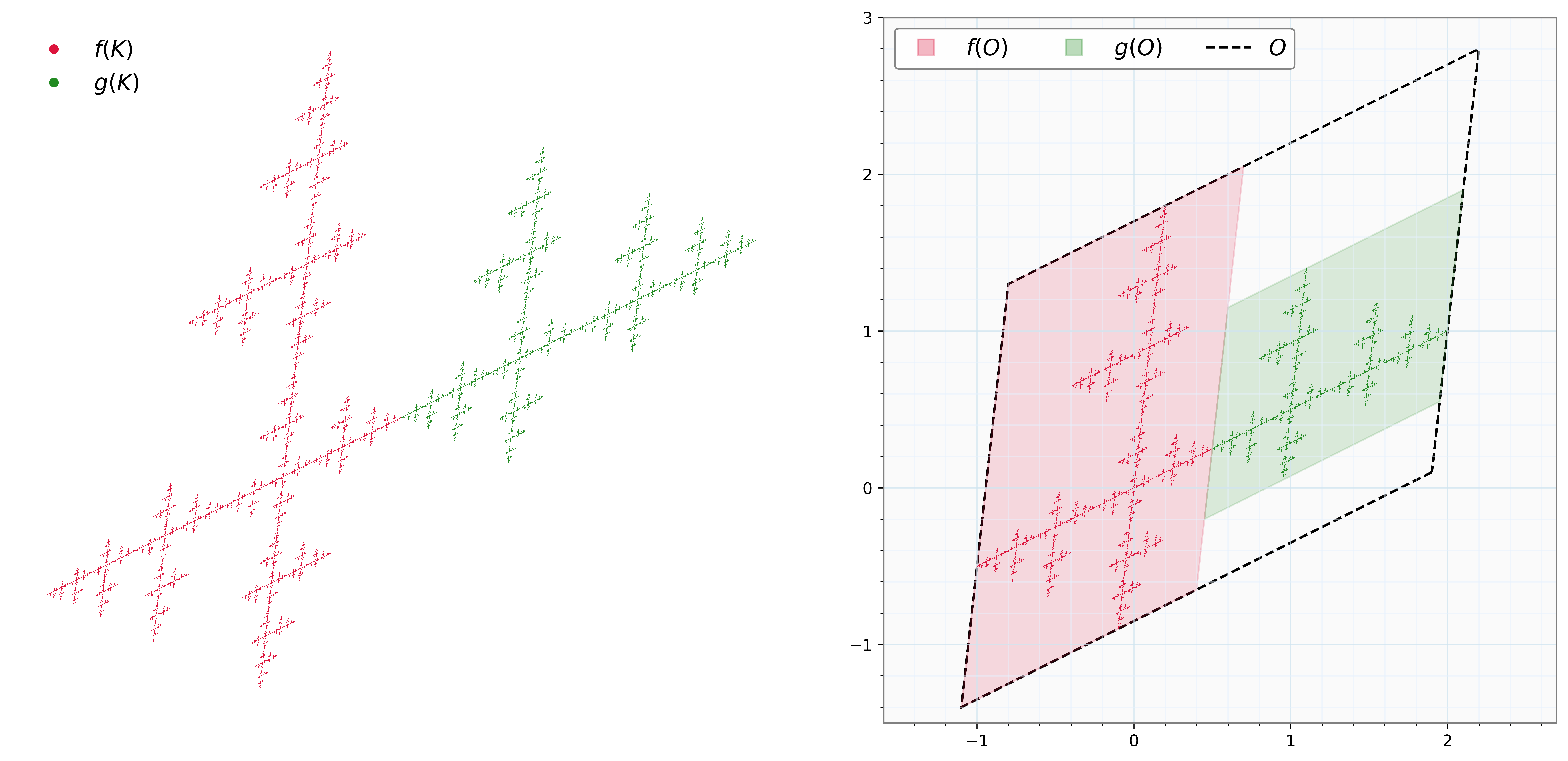} 
    \caption{Geometric validation of the Open Set Condition for Example \ref{ex:neg_cI_homothety} ($A^2 = -cI, S = rI$). The left panel displays the attractor $K = f(K) \cup g(K)$. The right panel illustrates the rigorously constructed Euclidean bounding set $O$, mapped from the theoretical dual axes, alongside its strictly disjoint affine images $f(O)$ and $g(O)$. This provides direct geometric corroboration of the algebraic separation guarantees, confirming $f(O) \cap g(O) = \emptyset$ and $f(O) \cup g(O) \subset O$.}
    \label{fig:example_3_1}
\end{figure}
\end{example}

\begin{example}[$S = -rI$] \label{ex:neg_cI_reflection}
Let $\mathcal{F} = \{f, g\}$ be the IFS given by:
\begin{align*}
    f(x) &= \begin{pmatrix} 0.2 & -0.7 \\ 1.2 & -0.2 \end{pmatrix} x, \\
    g(x) &= \begin{pmatrix} -0.2 & 0 \\ 0 & -0.2 \end{pmatrix} x + \begin{pmatrix} 1.0 \\ 0 \end{pmatrix}.
\end{align*}
Let $A$ and $S$ be the linear parts of $f$ and $g$. Squaring $A$ yields $A^2 = -0.8 I$, so $c = 0.8$. We have $\det(A) = 0.8 > 0$ and $S = -rI$ with $r = 0.2$. The parameter constraint $c + r \le 1$ holds perfectly as $0.8 + 0.2 = 1$.

The fixed points are $z_f = (0, 0)^\top$ and $z_g = (5/6, 0)^\top$. Since $z_f \neq z_g$, the hypotheses of Theorem \ref{thm:neg_cI} are satisfied.

To construct the open set $O$, we calculate the orientation-reversing scalar $\lambda = \frac{c(c-r)}{1-cr} = \frac{4}{7}$. The corresponding integer normal vector is $n = (-42, -13)^\top$, yielding the dual axis $m = A^\top n = (-24, 32)^\top$. 

Following the boundary formulas from Case 2 of Theorem \ref{thm:neg_cI}, the abstract open set $O_{\pi}$ in the dual space evaluates exactly to:
\begin{equation*}
    O_{\pi} = \{ x \in \mathbb{R}^2 \mid -50 < \pi_n(x) < 40 \quad \text{and} \quad -32 < \pi_m(x) < 40 \}.
\end{equation*}

Applying the inverse projection matrix $P^{-1}$, constructed from $n^\top$ and $m^\top$ as detailed in Example \ref{ex:neg_cI_homothety}, we map these abstract boundaries back to $\mathbb{R}^2$. This yields the vertices of the bounding set $O$:
\begin{equation*}
    v_1 = \begin{pmatrix} 28/23 \\ -2/23 \end{pmatrix}, \quad
    v_2 = \begin{pmatrix} -12/23 \\ -32/23 \end{pmatrix}, \quad
    v_3 = \begin{pmatrix} -25/23 \\ 10/23 \end{pmatrix}, \quad
    v_4 = \begin{pmatrix} 15/23 \\ 40/23 \end{pmatrix}.
\end{equation*}
The resulting attractor and this rigorously derived Euclidean bounding box are plotted alongside each other in Figure \ref{fig:example_3_2}, visually confirming the separation condition $f(O) \cap g(O) = \emptyset$.

\begin{figure}[htpb]
    \centering
    \includegraphics[width=\textwidth]{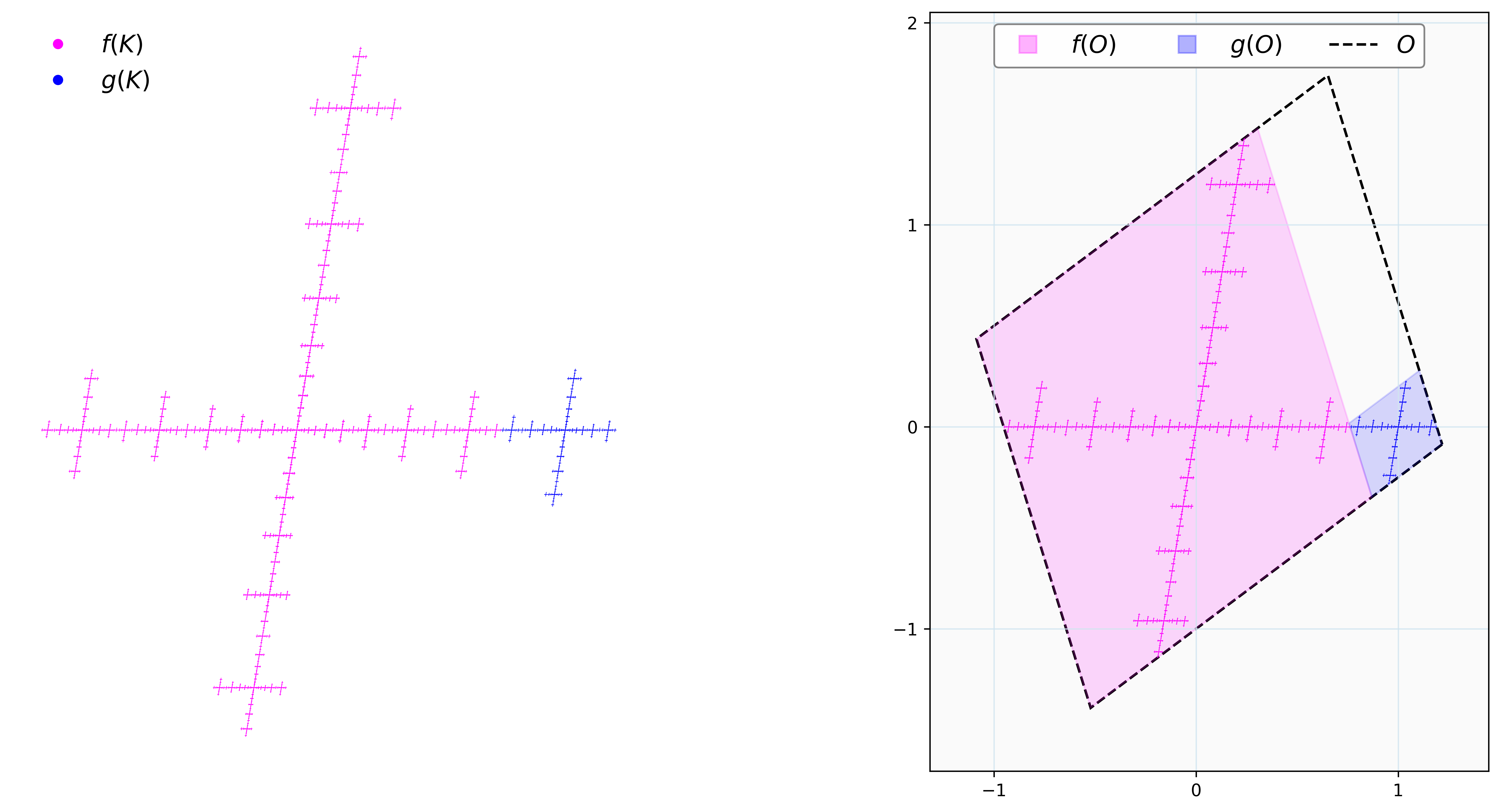} 
   \caption{The attractor (left) and the Euclidean bounding set $O$ (right) for Example \ref{ex:neg_cI_reflection}. The lack of interior overlap between the regions $f(O)$ and $g(O)$ provides visual proof of the open set condition.}
    \label{fig:example_3_2}
\end{figure}
\end{example}

\begin{theorem}\label{thm:pos_cI}
Let $\mathcal{F} = \{f, g\}$ be an iterated function system on $\mathbb{R}^2$ consisting of affine transformations satisfying the following conditions:
\begin{enumerate}
    \item $f(x) = Ax + b_f$, where $b_f \in \mathbb{R}^2$, $\det(A) < 0$, and $A^2 = cI$ for some $c \in (0, 1)$.
    \item $g(x) = Sx + b_g$, where $b_g \in \mathbb{R}^2$ and $S = \pm rI$ for some $r \in (0, 1)$. 
    \item $c + r \le 1$.
    \item The respective fixed points $z_f$ and $z_g$ of $f$ and $g$ are distinct, and the attractor $K$ is not contained in a single 1-dimensional line.
\end{enumerate}
Then $\mathcal{F}$ satisfies the open set condition.
\end{theorem}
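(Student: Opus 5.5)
The plan is to mirror the proof of Theorem~\ref{thm:neg_cI}: build an open parallelogram $O$ bounded by level sets of two linear functionals $\pi_n(x)=\langle n,x\rangle$ and $\pi_m(x)=\langle m,x\rangle$ with $m=A^\top n$. The only algebraic change is the sign in $(A^\top)^2 n = c\,n$, so that $A^\top m = +c\,n$ instead of $-c\,n$. Writing $f(x)=A(x-z_f)+z_f$, we get
\[
\pi_n(f(x)) = \pi_m(x) + \text{const}, \qquad \pi_m(f(x)) = c\,\pi_n(x)+\text{const}.
\]
Hence $f$ sends the $m$-slab onto an $n$-interval of the same length. It also sends the $n$-slab, with orientation preserved (rather than reversed as before), onto an $m$-interval scaled by $c$. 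The map $g$ acts diagonally on both coordinates by $\pm r$ about $z_g$. We again prescribe $\pi_n(O)=(\alpha_1,\alpha_2)$ of length $L_n$ and $\pi_m(O)=(\beta_1,\beta_2)$ of length $cL_n$. Then $\pi_n(f(O))$ has length $cL_n$ and $\pi_n(g(O))$ has length $rL_n$, and $c+r\le 1$ lets us place them as disjoint subintervals at opposite ends of $(\alpha_1,\alpha_2)$.

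First step: show that $v=z_f-z_g$ and $Av$ are linearly independent. Unlike Theorem~\ref{thm:neg_cI}, $A$ now has real eigenvalues $\pm\sqrt c$ with real eigenvectors, so this can fail. Hypothesis~(4) is used exactly here. If $Av=\gamma v$, consider the line $\ell=z_g+\mathbb{R}v$, which also passes through $z_f$. Then $f(\ell)=z_f+A(\ell-z_f)=z_f+\mathbb{R}Av=\ell$, and $g(\ell)=z_g\pm r\,\mathbb{R}v=\ell$ because $S=\pm rI$. So $\ell\cap K$ is a nonempty compact invariant set, and by uniqueness of the attractor $K\subset\ell$, which is a contradiction.

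Second step: choose $n$ with $\langle n, Av\rangle=\lambda\langle n,v\rangle$, i.e.\ $n\perp (A-\lambda I)v$, for a scalar $\lambda$ to be tuned. Independence of $v$ and $Av$ gives $\langle n,v\rangle\ne0$, and we normalize so that $\langle n,v\rangle>0$. This makes the combination $\pi_m(x)-\lambda\pi_n(x)$ take a common value $k$ at $z_f$ and $z_g$, and all translation constants can then be written through $k$, $\pi_n(z_f)$ and $\pi_n(z_g)$.

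Sub-case $S=rI$: I expect the orientation-preserving action of $f$ to force a choice analogous to Case~2 of Theorem~\ref{thm:neg_cI}. Tentatively $\lambda$ should be obtained from that formula with $c\mapsto -c$, e.g.\ $\lambda=-c(c+r)/(1+cr)$. Sub-case $S=-rI$: I expect the simpler $\lambda=0$-type choice to play the role that Case~1 played there. In each sub-case I would solve the four linear equations that force
\[
\pi_n(g(O))=(\alpha_1,\alpha_1+rL_n),\qquad \pi_n(f(O))=(\alpha_2-cL_n,\alpha_2),\qquad \pi_m(f(O))=(\beta_1,\beta_2),
\]
and read off $\lambda$ from the consistency requirement.

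Third step: check that $n$ and $m=A^\top n$ are independent, so that $O$ is a genuine nondegenerate parallelogram. This fails only if $n$ is an eigenvector of $A^\top$, i.e.\ if $(A-\lambda I)v$ lies on an eigenline of $A$. That is a codimension-one coincidence in $\lambda$. If it occurs for the forced value, I would first try the other admissible ordering of the two subintervals, putting $g$'s interval at the top end. Failing that, I would exploit the slack when $c+r<1$, or fall back on the eigen-coordinate box: in eigen-coordinates both $f$ and $g$ act diagonally, and one can check the extreme case directly.

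The main obstacle is the final verification that $\pi_m(g(O))\subset(\beta_1,\beta_2)$. In Theorem~\ref{thm:neg_cI} this containment is exactly where the tuned $\lambda$ made $\inf\pi_m(g(O))$ collapse onto $\beta_1$. Here the signs in the $m$-equation for $f$ change, so I must redo that algebra and confirm that the chosen $\lambda$ gives containment, not just $\pi_n$-separation. Once $f(O)\cup g(O)\subset O$ and $\pi_n(f(O))\cap\pi_n(g(O))=\emptyset$ hold, the OSC follows.
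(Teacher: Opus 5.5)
Your framework matches the paper's: a parallelogram cut out by $\pi_n$ and $\pi_m$ with $m=A^\top n$, hypothesis (4) used to make $v$ and $Av$ independent, and $g$ and $f$ placed at opposite ends of the $\pi_n$-interval. The gap is that the whole proof hinges on the choice of $\lambda$, which your proposal leaves open, and both of your tentative values fail.

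Write $a=\pi_n(z_f)$, $b=\pi_n(z_g)$ and $L=a-b>0$, so that $\pi_m(z_f)=k+\lambda a$ and $\pi_m(z_g)=k+\lambda b$.

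\emph{The equations do not determine $\lambda$.} The equations you list hold for every $\lambda$ once $\alpha_2=a$, $\beta_2=k+\lambda a$, $\beta_1=\beta_2-cL_n$, and $\alpha_1=b$ (if $S=rI$) or $\alpha_1=b-rL$ (if $S=-rI$). When $c+r=1$ there is no slack, so these placements are the only possible ones.

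\emph{The real condition is an inequality.} The only condition on $\lambda$ is the containment $\pi_m(g(O))\subset(\beta_1,\beta_2)$ that you postpone. Using $\pi_m(z_g)=\beta_2-\lambda L$, it reduces to $0\le\lambda\le c$ when $S=rI$ (it just says $\pi_m(z_g)\in[\beta_1,\beta_2]$). When $S=-rI$ it reduces to $rc\le\lambda\le c$.

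\emph{Your values fall outside these ranges.} Your value $-c(c+r)/(1+cr)$ for $S=rI$ is negative, so $z_g$ lies beyond the edge $\pi_m=\beta_2$ and $g(O)$ sticks out of $O$. Your value $\lambda=0$ for $S=-rI$ puts $z_g$ on that edge, and then $\pi_m(g(O))=(\beta_2,\beta_2+rcL_n)$ misses $O$ entirely. You also swapped the correspondence with Theorem~\ref{thm:neg_cI}: $\lambda=0$ does work for $S=rI$. And substituting $c\mapsto -c$ in $c(c-r)/(1-cr)$ gives $+c(c+r)/(1+cr)$, not its negative; that value lies in both admissible ranges.

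The missing idea is the paper's choice $\lambda=c$, i.e.\ $n\perp v-c^{-1}Av$, in both sub-cases. Then $\pi_m(f(x))=c\,\pi_n(x)+k$ and $\pi_m(z_g)=c\,\pi_n(z_g)+k$. With $\beta_j=c\alpha_j+k$, the fixed point $z_g$ lies on the diagonal of the box joining $(\alpha_1,\beta_1)$ and $(\alpha_2,\beta_2)=(\pi_n(z_f),\pi_m(z_f))$. Since $g$ acts by the same scalar $\pm r$ about $z_g$ in both coordinates, it maps the box to a box whose diagonal lies on the same line. Hence the $\pi_m$-containment follows from the $\pi_n$-containment, giving $\pi_m(g(O))=(\beta_1,\beta_1+rcL_n)$ in both cases.

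This choice also makes your third step immediate with no fallback needed. If $A^\top n=\mu n$, then $\mu=\lambda=c$, but the eigenvalues of $A^\top$ are $\pm\sqrt c\neq c$. Your proposed fallback of putting $g$'s interval at the top end is unavailable anyway: $z_g\in\overline{g(O)}$, $z_f\in\overline{f(O)}$ and $\pi_n(z_g)<\pi_n(z_f)$.
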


\begin{proof}
Let $z_f, z_g \in \mathbb{R}^2$ denote the unique fixed points of $f$ and $g$, respectively. Since they are distinct, $v = z_f - z_g \neq 0$. If $v$ were an eigenvector of $A$, then $Av = \pm \sqrt{c} v$. Because $S = \pm rI$, both $f$ and $g$ would map the line passing through $z_f$ and $z_g$ strictly into itself, collapsing the attractor $K$ to a 1-dimensional line segment. This contradicts the hypothesis. Thus, $v$ and $Av$ are linearly independent over $\mathbb{R}$.

Choose a non-zero normal vector $n \in \mathbb{R}^2$ orthogonal to $v - c^{-1}Av$, such that:
\begin{equation*}
    \langle n, A(z_f - z_g) \rangle = c \langle n, z_f - z_g \rangle.
\end{equation*}
By linear independence, $\langle n, z_f - z_g \rangle \neq 0$. Without loss of generality, assume $L = \langle n, z_f - z_g \rangle > 0$.

Define $m = A^\top n$. We verify that $n$ and $m$ are linearly independent. Assume for contradiction that $m = \lambda n$ for some $\lambda \in \mathbb{R}$. Then $A^\top n = \lambda n$. Evaluating $\langle n, Av \rangle$ yields $\langle A^\top n, v \rangle = \lambda \langle n, v \rangle$. By construction, $\langle n, Av \rangle = c\langle n, v \rangle$. Since $\langle n, v \rangle \neq 0$, this implies $\lambda = c$, yielding $A^\top n = cn$. Applying $A^\top$ a second time yields $(A^\top)^2 n = c^2 n$. However, since $(A^\top)^2 = cI$, it must also hold that $(A^\top)^2 n = cn$. Thus $cn = c^2 n$, which implies $c = c^2$. Since $c \in (0, 1)$, this is a contradiction. Consequently, $n$ and $m$ are linearly independent.

Evaluating the projection of the fixed points onto $m$ yields:
\begin{equation*}
    \langle m, z_f - z_g \rangle = \langle A^\top n, z_f - z_g \rangle = \langle n, A(z_f - z_g) \rangle = cL.
\end{equation*}
Thus, $\langle m, z_f \rangle - c\langle n, z_f \rangle = \langle m, z_g \rangle - c\langle n, z_g \rangle = k$ for some $k \in \mathbb{R}$. Applying $A^\top$ to $m$ gives:
\begin{equation*}
    A^\top m = (A^\top)^2 n = (A^2)^\top n = cn.
\end{equation*}

Define $\pi_n(x) = \langle n, x \rangle$ and $\pi_m(x) = \langle m, x \rangle$. Using $b_f = z_f - Az_f$ and $\pi_m(z_f) = \langle n, Az_f \rangle = c\pi_n(z_f) + k$, the projections of $f$ are:
\begin{align*}
    \pi_n(f(x)) &= \langle A^\top n, x \rangle + \pi_n(z_f - Az_f) = \pi_m(x) + \pi_n(z_f) - \pi_m(z_f), \\
    \pi_m(f(x)) &= \langle A^\top m, x \rangle + \pi_m(z_f - Az_f) \\
    &= c\pi_n(x) + \pi_m(z_f) - c\pi_n(z_f) = c\pi_n(x) + k.
\end{align*}

We divide the construction of the canonical open set $O$ into two cases based on $S$. Let $O = \{ x \in \mathbb{R}^2 \mid \alpha_1 < \pi_n(x) < \alpha_2 \text{ and } \beta_1 < \pi_m(x) < \beta_2 \}$. Due to the linear independence of $n$ and $m$, $O$ is a non-degenerate, 2-dimensional open set.

\vspace{0.5em}
\noindent\textbf{Case 1: $S = rI$}
\vspace{0.5em}

Here, $g(x) = rx + (1 - r)z_g$. The projections are:
\begin{align*}
    \pi_n(g(x)) &= r\pi_n(x) + (1-r)\pi_n(z_g), \\
    \pi_m(g(x)) &= r\pi_m(x) + (1-r)\pi_m(z_g).
\end{align*}
Define the boundaries of $O$ as $\alpha_1 = \pi_n(z_g)$, $\alpha_2 = \pi_n(z_f) = \alpha_1 + L$, $\beta_1 = c\alpha_1 + k$, and $\beta_2 = c\alpha_2 + k$. Let $L_n = \alpha_2 - \alpha_1 = L > 0$. Note that $\beta_2 - \beta_1 = cL_n > 0$. 

For $x \in O$, $\pi_n(g(O)) = (r\alpha_1 + (1-r)\alpha_1, r\alpha_2 + (1-r)\alpha_1) = (\alpha_1, \alpha_1 + rL_n)$. On the dual axis, since $\pi_m(z_g) = c\pi_n(z_g) + k = \beta_1$, $\pi_m(g(O)) = (\beta_1, r\beta_2 + (1-r)\beta_1) = (\beta_1, \beta_1 + r c L_n) \subset (\beta_1, \beta_2)$. Thus, $g(O) \subset O$.

For $f$, since $c > 0$, $\pi_m(f(O)) = (c\alpha_1 + k, c\alpha_2 + k) = (\beta_1, \beta_2)$. Since $\pi_m(x) \in (\beta_1, \beta_2)$, the bounds for $\pi_n(f(x))$ are computed using $\pi_m(z_f) = c\alpha_2 + k$:
\begin{align*}
    \inf \pi_n(f(O)) &= \beta_1 + \alpha_2 - (c\alpha_2 + k) \\
    &= c\alpha_1 + k + \alpha_2 - c\alpha_2 - k \\
    &= \alpha_2 - c(\alpha_2 - \alpha_1) = \alpha_2 - cL_n, \\[1ex]
    \sup \pi_n(f(O)) &= \beta_2 + \alpha_2 - (c\alpha_2 + k) \\
    &= c\alpha_2 + k + \alpha_2 - c\alpha_2 - k = \alpha_2.
\end{align*}
Thus, $\pi_n(f(O)) = (\alpha_2 - cL_n, \alpha_2)$. Because $c + r \le 1$, we have $rL_n \le (1-c)L_n$. This yields $\sup \pi_n(g(O)) = \alpha_1 + rL_n \le \alpha_2 - cL_n = \inf \pi_n(f(O))$. Therefore, $\pi_n(g(O))$ and $\pi_n(f(O))$ are disjoint, satisfying $f(O) \cap g(O) = \emptyset$.

\vspace{0.5em}
\noindent\textbf{Case 2: $S = -rI$}
\vspace{0.5em}

Here, $g(x) = -rx + (1 + r)z_g$. The projections are:
\begin{align*}
    \pi_n(g(x)) &= -r\pi_n(x) + (1+r)\pi_n(z_g), \\
    \pi_m(g(x)) &= -r\pi_m(x) + (1+r)\pi_m(z_g).
\end{align*}
Define the boundaries of $O$ as $\alpha_1 = \pi_n(z_g) - rL$, $\alpha_2 = \pi_n(z_f) = \pi_n(z_g) + L$, $\beta_1 = c\alpha_1 + k$, and $\beta_2 = c\alpha_2 + k$. Let $L_n = \alpha_2 - \alpha_1 = L(1+r) > 0$. Note that $\beta_2 - \beta_1 = cL_n > 0$. 

Isolating $\pi_n(z_g)$ from the boundary definitions yields $(1+r)\pi_n(z_g) = \alpha_1 + r\alpha_2$. The primary projection of $O$ under $g$ gives:
\begin{align*}
    \inf \pi_n(g(O)) &= -r\alpha_2 + (1+r)\pi_n(z_g) \\
    &= -r\alpha_2 + \alpha_1 + r\alpha_2 = \alpha_1, \\[1ex]
    \sup \pi_n(g(O)) &= -r\alpha_1 + (1+r)\pi_n(z_g) \\
    &= -r\alpha_1 + \alpha_1 + r\alpha_2 \\
    &= \alpha_1 + r(\alpha_2 - \alpha_1) = \alpha_1 + rL_n.
\end{align*}
Thus, $\pi_n(g(O)) = (\alpha_1, \alpha_1 + rL_n)$. 

For the dual axis, $\pi_m(z_g) = c\pi_n(z_g) + k$. Substituting the expression for $\pi_n(z_g)$ yields $(1+r)\pi_m(z_g) = c(\alpha_1 + r\alpha_2) + k(1+r) = (c\alpha_1 + k) + r(c\alpha_2 + k) = \beta_1 + r\beta_2$. The bounds evaluate as:
\begin{align*}
    \inf \pi_m(g(O)) &= -r\beta_2 + (1+r)\pi_m(z_g) \\
    &= -r\beta_2 + \beta_1 + r\beta_2 = \beta_1, \\[1ex]
    \sup \pi_m(g(O)) &= -r\beta_1 + (1+r)\pi_m(z_g) \\
    &= -r\beta_1 + \beta_1 + r\beta_2 \\
    &= \beta_1 + r(\beta_2 - \beta_1) < \beta_2.
\end{align*}
Thus, $\pi_m(g(O)) \subset (\beta_1, \beta_2)$.

For $f$, since $\pi_m(f(O)) = (\beta_1, \beta_2)$, the bounds for $\pi_n(f(x))$ follow identically from the logic in Case 1. Specifically:
\begin{align*}
    \inf \pi_n(f(O)) &= \beta_1 + \alpha_2 - \pi_m(z_f) = c\alpha_1 + k + \alpha_2 - (c\alpha_2 + k) = \alpha_2 - cL_n, \\[1ex]
    \sup \pi_n(f(O)) &= \beta_2 + \alpha_2 - \pi_m(z_f) = c\alpha_2 + k + \alpha_2 - (c\alpha_2 + k) = \alpha_2.
\end{align*}
Thus, $\pi_n(f(O)) = (\alpha_2 - cL_n, \alpha_2)$. Because $c + r \le 1$, we have $\alpha_1 + rL_n \le \alpha_2 - cL_n$. This guarantees that $\pi_n(g(O))$ and $\pi_n(f(O))$ are disjoint.

In both cases, there exists a non-degenerate open set $O$ such that $f(O) \cup g(O) \subset O$ and $f(O) \cap g(O) = \emptyset$, satisfying the open set condition.
\end{proof}

The following examples demonstrate Theorem \ref{thm:pos_cI} for the cases $S = rI$ and $S = -rI$.

\begin{example}[$S = rI$] \label{ex:pos_cI_homothety}
Let $\mathcal{F} = \{f, g\}$ be the IFS on $\mathbb{R}^2$ given by:
\begin{align*}
    f(x) &= \begin{pmatrix} 0.3 & 0.4 \\ 0.4 & -0.3 \end{pmatrix} x, \\
    g(x) &= \begin{pmatrix} 0.75 & 0 \\ 0 & 0.75 \end{pmatrix} x + \begin{pmatrix} 1.0 \\ 2.0 \end{pmatrix}.
\end{align*}
Let $A$ and $S$ be the linear parts of $f$ and $g$. We have:
\begin{equation*}
    A^2 = \begin{pmatrix} 0.25 & 0 \\ 0 & 0.25 \end{pmatrix} = 0.25 I.
\end{equation*}
Thus, $A^2 = cI$ with $c = 0.25$. Furthermore, $\det(A) = -0.25 < 0$, strictly satisfying the negative determinant condition. The linear part of $g$ acts as a positive homothety $S = rI$ with $r = 0.75$. The parameter constraint $c + r \le 1$ holds perfectly as $0.25 + 0.75 = 1$.

The fixed points evaluate to $z_f = (0, 0)^\top$ and $z_g = (4, 8)^\top$. Since $z_f \neq z_g$, the hypotheses of Theorem \ref{thm:pos_cI} are satisfied.

To explicitly construct the bounding open set $O$, we determine the invariant dual axes. The connecting vector is $v = z_f - z_g = (-4, -8)^\top$. We choose a normal vector $n = (-14, -17)^\top$ which is strictly orthogonal to $v - c^{-1}Av$, ensuring that $\langle n, v \rangle = 192 > 0$. The corresponding dual axis evaluates exactly to $m = A^\top n = (-11, -0.5)^\top$. 

The projections of the fixed points onto the primary axis yield $\pi_n(z_f) = 0$ and $\pi_n(z_g) = -192$. The offset is $k = \pi_m(z_f) - c\pi_n(z_f) = 0$. By applying the formulas derived in Case 1 of Theorem \ref{thm:pos_cI} (where $S = rI$), the exact affine boundaries evaluate to:
\begin{align*}
    \alpha_1 &= \pi_n(z_g) = -192, & \alpha_2 &= \pi_n(z_f) = 0, \\
    \beta_1 &= c\alpha_1 + k = 0.25(-192) = -48, & \beta_2 &= c\alpha_2 + k = 0.
\end{align*}
Thus, the open set is exactly defined in the abstract dual space as:
\begin{equation*}
    O_{\pi} = \{ x \in \mathbb{R}^2 \mid -192 < \pi_n(x) < 0 \quad \text{and} \quad -48 < \pi_m(x) < 0 \}.
\end{equation*}

To map this abstract parallelogram back to standard Euclidean space, we construct the projection matrix $P$ whose rows are $n^\top$ and $m^\top$, and compute its inverse:
\begin{equation*}
    P = \begin{pmatrix} -14 & -17 \\ -11 & -0.5 \end{pmatrix} 
    \implies 
    P^{-1} = \frac{1}{-180} \begin{pmatrix} -0.5 & 17 \\ 11 & -14 \end{pmatrix}.
\end{equation*}
Multiplying the four boundary corners of $O_{\pi}$ by $P^{-1}$ yields their exact Cartesian coordinates in $\mathbb{R}^2$:
\begin{align*}
    v_1 &= P^{-1} \begin{pmatrix} -192 \\ -48 \end{pmatrix} = \begin{pmatrix} 4 \\ 8 \end{pmatrix}, & 
    v_2 &= P^{-1} \begin{pmatrix} 0 \\ -48 \end{pmatrix} = \begin{pmatrix} 68/15 \\ -56/15 \end{pmatrix}, \\
    v_3 &= P^{-1} \begin{pmatrix} 0 \\ 0 \end{pmatrix} = \begin{pmatrix} 0 \\ 0 \end{pmatrix}, & 
    v_4 &= P^{-1} \begin{pmatrix} -192 \\ 0 \end{pmatrix} = \begin{pmatrix} -8/15 \\ 176/15 \end{pmatrix}.
\end{align*}
Notice that $v_1$ and $v_3$ map directly to the fixed points $z_g$ and $z_f$, a direct geometric consequence of the positive homothety in Case 1. The polygon defined by the vertices $\{v_1, v_2, v_3, v_4\}$ constitutes the bounding set $O$. The resulting attractor and this rigorously derived Euclidean bounding box are plotted alongside each other in Figure \ref{fig:example_3_3}, visually confirming that $f(O) \cap g(O) = \emptyset$ and $f(O) \cup g(O) \subset O$.

\begin{figure}[htpb]
    \centering
    \includegraphics[width=\textwidth]{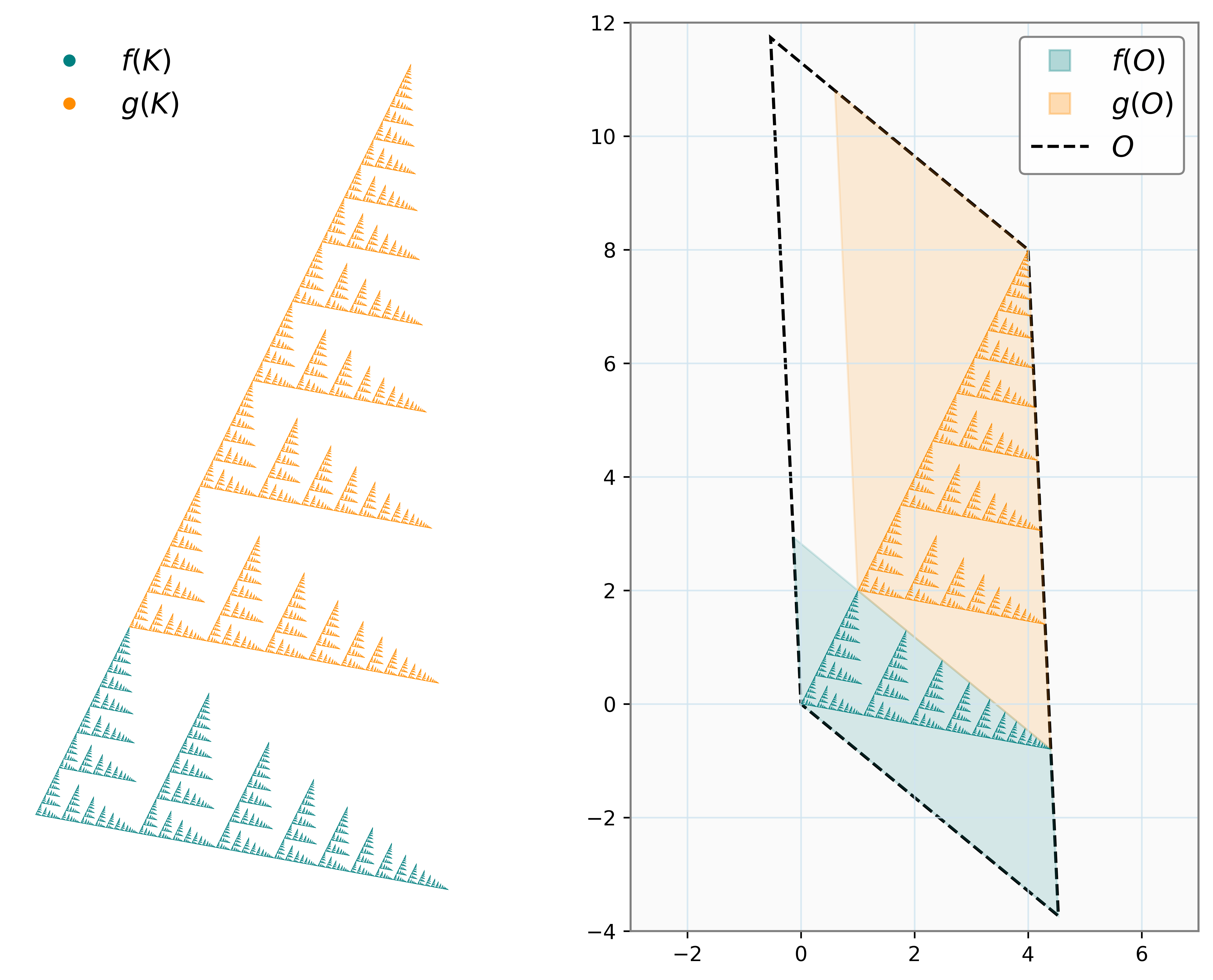} 
   \caption{The attractor (left) and bounding set $O$ (right) for Example \ref{ex:pos_cI_homothety}. The lack of interior overlap between $f(O)$ and $g(O)$ provides direct geometric proof of the open set condition.}
    \label{fig:example_3_3}
\end{figure}
\end{example}

\begin{example}[$S = -rI$] \label{ex:pos_cI_chosen}
Let $\mathcal{F} = \{f, g\}$ be the IFS given by:
\begin{align*}
    f(x) &= \begin{pmatrix} 0 & 1 \\ 0.25 & 0 \end{pmatrix} x, \\
    g(x) &= \begin{pmatrix} -0.75 & 0 \\ 0 & -0.75 \end{pmatrix} x + \begin{pmatrix} 1.75 \\ 1.75 \end{pmatrix}.
\end{align*}
Squaring the linear part of $f$ yields $A^2 = 0.25 I$, so $c = 0.25$. The determinant is $\det(A) = -0.25 < 0$. The linear part of $g$ acts as a scaled point reflection with $r = 0.75$. The parameter constraint $c + r = 1$ is satisfied.

The fixed points evaluate to $z_f = (0, 0)^\top$ and $z_g = (1, 1)^\top$. To construct the canonical open set $O$, Theorem \ref{thm:pos_cI} dictates that the normal vector $n$ must be orthogonal to $v - c^{-1}Av$. Calculating this vector yields $(3, 0)^\top$. Thus, we choose the strictly orthogonal normal vector $n = (0, -1)^\top$. The corresponding dual axis is $m = A^\top n = (-0.25, 0)^\top$.

Following Case 2 of Theorem \ref{thm:pos_cI}, we determine the abstract boundaries in the dual space:
\begin{equation*}
    O_{\pi} = \{ x \in \mathbb{R}^2 \mid -1.75 < \pi_n(x) < 0 \quad \text{and} \quad -0.4375 < \pi_m(x) < 0 \}.
\end{equation*}

Mapping these boundaries back to $\mathbb{R}^2$ via the inverse projection matrix $P^{-1}$ yields the vertices for the bounding set $O$:
\begin{equation*}
    v_1 = \begin{pmatrix} 1.75 \\ 1.75 \end{pmatrix}, \quad
    v_2 = \begin{pmatrix} 1.75 \\ 0 \end{pmatrix}, \quad
    v_3 = \begin{pmatrix} 0 \\ 0 \end{pmatrix}, \quad
    v_4 = \begin{pmatrix} 0 \\ 1.75 \end{pmatrix}.
\end{equation*}

Thus, the bounding set $O$ corresponds to a Euclidean square. The resulting attractor and its bounding box are plotted in Figure \ref{fig:example_3_4}. The images $f(O)$ and $g(O)$ are strictly disjoint within $O$, satisfying the open set condition.

\begin{figure}[htpb]
    \centering
    \includegraphics[width=\textwidth]{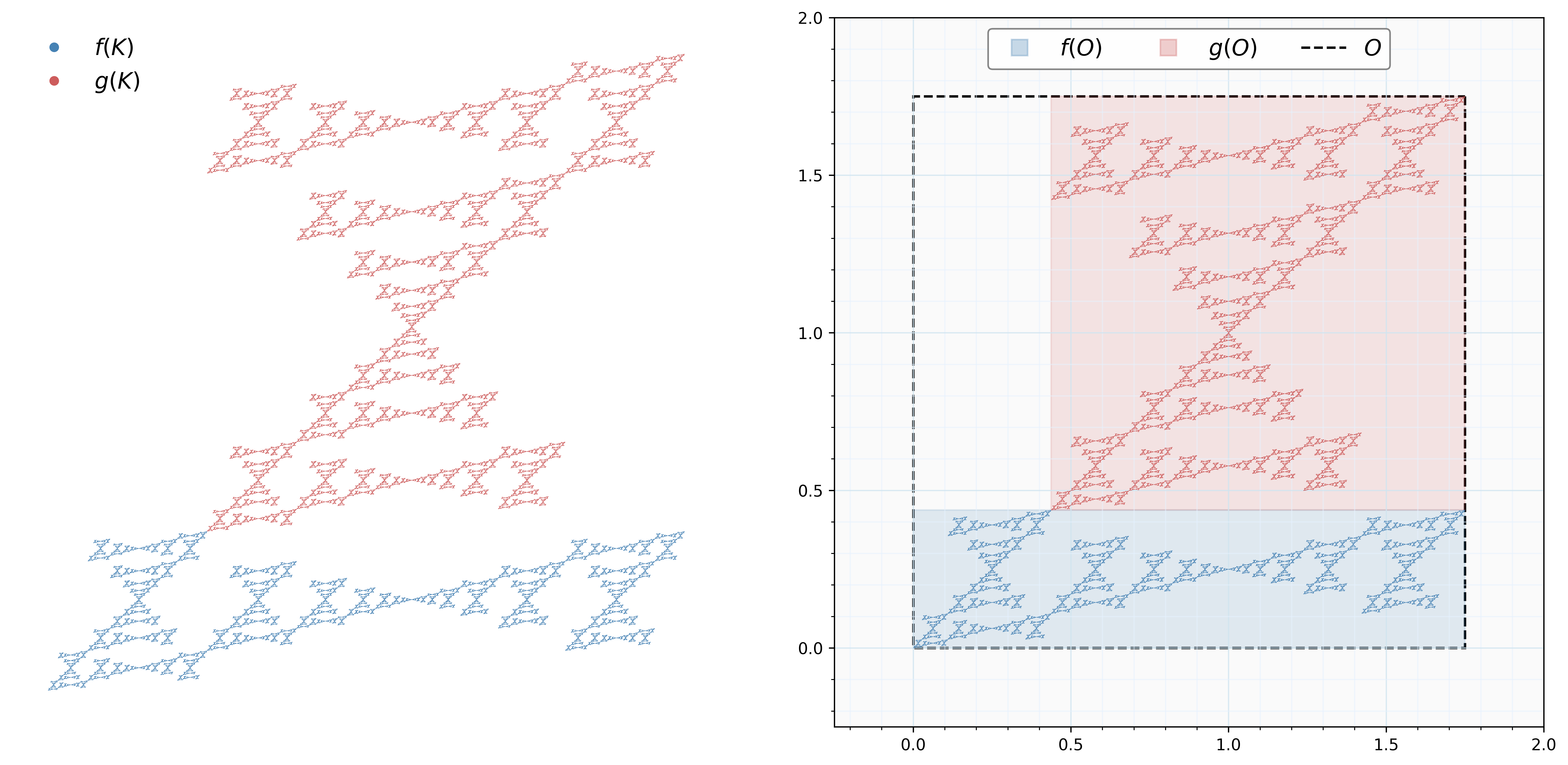} 
    \caption{The attractor (left) and the bounding square $O$ (right) for Example \ref{ex:pos_cI_chosen}. The images $f(O)$ and $g(O)$ demonstrate strict interior separation without overlap, geometrically validating the open set condition.}
    \label{fig:example_3_4}
\end{figure}
\end{example}

\begin{theorem}\label{thm:connectivity}
Let $\mathcal{F} = \{f, g\}$ be an iterated function system satisfying the hypotheses of either Theorem \ref{thm:neg_cI} or Theorem \ref{thm:pos_cI}, with the exception that the constraint $c + r \le 1$ is relaxed to allow any sum $c + r > 0$. Assuming $z_f \neq z_g$, the topological structure of the attractor $K$ is characterized as follows:
\begin{enumerate}
    \item If $c + r \ge 1$, then $K$ is connected.
    \item If $c + r < 1$, then $K$ is totally disconnected.
\end{enumerate}
\end{theorem}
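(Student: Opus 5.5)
The approach is to reduce both assertions to the behaviour of the single pair $f(K), g(K)$. Two standard facts do this. By Hata's criterion, the attractor of an IFS of two injective contractions is connected if and only if $f(K) \cap g(K) \neq \emptyset$. Conversely, if $f(K) \cap g(K) = \emptyset$, the coding map from $\{f,g\}^{\mathbb{N}}$ onto $K$ is a continuous bijection, hence a homeomorphism, so $K$ is homeomorphic to the Cantor set and is totally disconnected. Eventual contractivity (the theorems of Section 1) lets us pass to a contracting iterate wherever Hata's criterion needs contractions. The plan is therefore to prove that $f(K) \cap g(K) = \emptyset$ exactly when $c + r < 1$.

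\textbf{Step 1 (the case $c+r<1$).} I would reuse the parallelogram $O$ built in the proofs of Theorem \ref{thm:neg_cI} and Theorem \ref{thm:pos_cI}, in the case matching the sign of $A^2$ and of $S$. The key observation is that the inequality $c + r \le 1$ enters those constructions only in the last comparison. The boundary values $\alpha_i, \beta_i$, the computations $\pi_n(f(O)) = (\alpha_2 - cL_n, \alpha_2)$ and $\pi_n(g(O)) = (\alpha_1, \alpha_1 + rL_n)$, and the inclusions $f(O), g(O) \subset O$ never use it. In Case 2 of Theorem \ref{thm:neg_cI} one only needs $1 - cr > 0$, which holds because $c, r < 1$.

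Since $\overline{O}$ is compact and $f(\overline{O}) \cup g(\overline{O}) \subset \overline{O}$, uniqueness of the attractor gives $K \subset \overline{O}$. Then $\pi_n(g(K)) \subset [\alpha_1, \alpha_1 + rL_n]$ and $\pi_n(f(K)) \subset [\alpha_2 - cL_n, \alpha_2]$. When $c + r < 1$ these closed intervals are separated by a gap of length $(1 - c - r)L_n > 0$. Hence $f(K) \cap g(K) = \emptyset$, and $K$ is totally disconnected.

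\textbf{Step 2 (the case $c+r\ge 1$).} Here I must produce a point of $f(K) \cap g(K)$. Since $K$ is not guaranteed to fill $\overline{O}$, a projection argument alone is insufficient. The first thing I would try is an explicit common point. In the equality case I expect the corner of $\overline{O}$ where $\pi_n = \alpha_1 + rL_n = \alpha_2 - cL_n$ to be the image of suitable fixed points under both maps. For instance, in Case 1 of Theorem \ref{thm:pos_cI}, $g$ fixes the vertex $z_g$ and $f$ fixes $z_f$. I would compute $f(p)$ and $g(q)$ for $p, q$ among the points $z_f, z_g, f(z_g), g(z_f), \dots$, all of which lie in $K$, and look for a coincidence.

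For $c + r > 1$ a single coincidence will not exist in general. There I would instead run a Hausdorff-limit argument. Start from a compact connected set $C_0 \subset \overline{O}$ containing $z_f$ and $z_g$, and iterate $C_{j+1} = f(C_j) \cup g(C_j)$. Then $C_j \to K$ in the Hausdorff metric, and Hausdorff limits of continua are connected. The inductive step requires $f(C_j) \cap g(C_j) \neq \emptyset$. I would aim to prove it with an intermediate-value argument in the $(\pi_n, \pi_m)$ coordinates:
\begin{enumerate}
    \item $f(C_j)$ is a continuum joining $z_f$ (where $\pi_n = \alpha_2$) to $f(z_g)$, which lies on the side $\pi_n = \alpha_2 - cL_n$.
    \item $g(C_j)$ is a continuum joining $z_g$ to $g(z_f)$, whose $\pi_n$-value is $\alpha_1 + rL_n \ge \alpha_2 - cL_n$.
\end{enumerate}
If the two continua can be shown to cross the strip $\alpha_2 - cL_n \le \pi_n \le \alpha_1 + rL_n$ transversally, one running from the $\beta_1$-side to the $\beta_2$-side and the other between the two vertical sides, a planar crossing lemma forces them to intersect.

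\textbf{Main obstacle.} The obstacle I expect is exactly this crossing property: controlling the $\pi_m$-extent of $g(C_j)$ and $f(C_j)$ inside the overlap strip, uniformly in $j$ and in all four sign cases. I would attempt it by choosing $C_0$ to be a suitable polygonal arc through corners of $\overline{O}$ and verifying that the crossing configuration is reproduced under one application of $f$ and of $g$, using the explicit projection formulas from Theorems \ref{thm:neg_cI} and \ref{thm:pos_cI}.
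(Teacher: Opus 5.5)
Your treatment of $c+r<1$ is correct and matches the paper's argument. The parallelogram $O$ of Theorems \ref{thm:neg_cI} and \ref{thm:pos_cI} is built without using $c+r\le 1$. Hence $K\subset\overline{O}$, and $\pi_n(f(K))$, $\pi_n(g(K))$ sit at opposite ends of $[\alpha_1,\alpha_2]$ with a gap of length $(1-c-r)L_n$.

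The gap is in part (1). For $c+r\ge 1$ you never exhibit a point of $f(K)\cap g(K)$. The equality case is left as a hoped-for coincidence. For $c+r>1$ the continuum iteration rests entirely on the crossing property you yourself flag as unproven; without it you cannot even ensure that each $C_{j+1}=f(C_j)\cup g(C_j)$ stays connected. The crossing set-up you describe also only fits Case 1 of Theorem \ref{thm:pos_cI}. In the other three sign cases one has $\pi_n(g(z_f))<\alpha_1+rL_n$, and when $A^2=-cI$ also $\pi_n(z_f)<\alpha_2$. So the endpoints of $f(C_j)$ and $g(C_j)$ are not where your items (1) and (2) place them, and the configuration as stated does not apply.

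The missing idea is a reduction to one dimension via the subsystem $\{f^2,g\}$, which is what the paper does.
Because $A^2=\pm cI$ and $S=\pm rI$, the maps $f^2$ and $g$ are homotheties with ratios $\pm c$ and $\pm r$, centred at $z_f$ and $z_g$. Both therefore preserve the line $\ell$ through $z_f$ and $z_g$. Since $f^2(K)\cup g(K)\subset K$, the attractor $K'$ of $\{f^2,g\}$ lies in $K\cap\ell$. On $\ell$, $K'$ is the attractor $J$ of a 1D IFS with contraction ratios $c$ and $r$ and distinct fixed points.

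Let $I=[a,b]$ be the convex hull of $J$. The images of $I$ are subintervals of lengths $c|I|$ and $r|I|$, one containing $a$ and the other containing $b$. When $c+r\ge 1$ they cover $I$ and share a point. Hence $I$ is invariant, so $J=I$, and $f^2(K')\cap g(K')\neq\emptyset$. Since $f^2(K')\subset f(K)$ and $g(K')\subset g(K)$, you get $f(K)\cap g(K)\neq\emptyset$, and Hata's criterion finishes.

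The paper writes down the interval $H$ explicitly in each of the four subcases. For $A^2=cI$, $S=rI$, $c+r=1$ it gives the concrete common point $g(z_f)=f^2(z_g)\in f(K)\cap g(K)$, which is the coincidence your equality-case search was looking for.
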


\begin{proof}
Let $z_f, z_g \in \mathbb{R}^2$ denote the distinct fixed points of $f$ and $g$. The infinite line passing through $z_f$ and $z_g$ is invariant under the subsystem $\{f^2, g\}$. We parameterize this line via the bijection $h: \mathbb{R} \to \mathbb{R}^2$ defined by $h(t) = z_f + t(z_g - z_f)$. Under this parametrization, $h(0) = z_f$ and $h(1) = z_g$. 

The subsystem induces 1D affine maps $\tilde{f}, \tilde{g}: \mathbb{R} \to \mathbb{R}$ such that $f^2(h(t)) = h(\tilde{f}(t))$ and $g(h(t)) = h(\tilde{g}(t))$. The respective contraction ratios are $c$ and $r$.

\vspace{0.5em}
\noindent\textbf{Case 1: $c + r \ge 1$}
\vspace{0.5em}

By standard topological properties, $K$ is connected if $f(K) \cap g(K) \neq \emptyset$. It suffices to show the physical segment $L = h([0,1])$ connects $z_f$ to $z_g$ within $K$. We construct a closed interval $H$ containing $[0, 1]$ such that $H \subset \tilde{f}(H) \cup \tilde{g}(H)$, rendering $h(H)$ invariant under the subsystem.

For $A^2 = cI$, $\tilde{f}(t) = ct$.
\begin{itemize}
    \item \textit{Subcase 1a ($S = rI$):} $\tilde{g}(t) = rt + 1 - r$. Let $H = [0, 1]$. Then $\tilde{f}(H) = [0, c]$ and $\tilde{g}(H) = [1-r, 1]$. Since $c + r \ge 1$, we have $c \ge 1-r$, yielding $\tilde{f}(H) \cup \tilde{g}(H) = [0, 1] = H$.
    \item \textit{Subcase 1b ($S = -rI$):} $\tilde{g}(t) = -rt + 1 + r$. Let $H = [0, 1+r]$. Then $\tilde{f}(H) = [0, c(1+r)]$ and $\tilde{g}(H) = [1-r^2, 1+r]$. Since $c + r \ge 1$, $c(1+r) \ge (1-r)(1+r) = 1-r^2$. Thus, the intervals overlap and cover $H$.
\end{itemize}

For $A^2 = -cI$, $\tilde{f}(t) = -ct$.
\begin{itemize}
    \item \textit{Subcase 2a ($S = rI$):} $\tilde{g}(t) = rt + 1 - r$. Let $H = [-c, 1]$. Then $\tilde{f}(H) = [-c, c^2]$ and $\tilde{g}(H) = [1 - r(1+c), 1]$. Since $c + r \ge 1$, we have $r(1+c) \ge (1-c)(1+c) = 1-c^2$, which implies $1 - r(1+c) \le c^2$. Thus, the intervals overlap and cover $H$.
    \item \textit{Subcase 2b ($S = -rI$):} $\tilde{g}(t) = -rt + 1 + r$. Let $H = \left[ \frac{-c(1+r)}{1-rc}, \frac{1+r}{1-rc} \right]$. Applying the mappings yields $\tilde{f}(H) = \left[ \frac{-c(1+r)}{1-rc}, \frac{c^2(1+r)}{1-rc} \right]$ and $\tilde{g}(H) = \left[ \frac{(1+r)(1-r-rc)}{1-rc}, \frac{1+r}{1-rc} \right]$. The union completely covers $H$ if $c^2(1+r) \ge (1+r)(1-r-rc)$. Dividing by $(1+r)$ and rearranging simplifies this strictly to $(c+1)(c+r-1) \ge 0$. Since $c > 0$, this holds exactly when $c+r \ge 1$.
\end{itemize}

In all configurations, $c+r \ge 1$ guarantees $\tilde{f}(H) \cup \tilde{g}(H) = H$. Because $[0, 1] \subset H$, the segment $L \subset K$. This implies $f(K) \cap g(K) \neq \emptyset$, hence $K$ is connected.

\vspace{0.5em}
\noindent\textbf{Case 2: $c + r < 1$}
\vspace{0.5em}

Let $O \subset \mathbb{R}^2$ be the bounded open set from Theorem \ref{thm:neg_cI} or \ref{thm:pos_cI}. Since $f(O) \cup g(O) \subset O$, we have $K \subset \overline{O}$. 

The projection $\pi_n(\overline{O})$ is a closed interval of length $L_n > 0$. The projected images $\pi_n(f(\overline{O}))$ and $\pi_n(g(\overline{O}))$ have lengths exactly $cL_n$ and $rL_n$, respectively. Since $c+r < 1$, the sum of their lengths satisfies:
\begin{equation*}
    cL_n + rL_n < L_n.
\end{equation*}

By the constructions in Theorems \ref{thm:neg_cI} and \ref{thm:pos_cI}, the mappings $f$ and $g$ anchor these projections to opposite extremal endpoints of $\pi_n(\overline{O})$. Because their combined length is strictly less than $L_n$, they are disjoint. This implies $f(\overline{O}) \cap g(\overline{O}) = \emptyset$.

Since $K \subset \overline{O}$, it strictly follows that $f(K) \cap g(K) = \emptyset$. As $K = f(K) \cup g(K)$ is the union of strictly disjoint compact sets, $\mathcal{F}$ satisfies the strong separation condition. Therefore, $K$ is totally disconnected.
\end{proof}

\subsection{Failure of the Open Set Condition and Connectedness for $f$-Aligned Reflections}

Theorems \ref{thm:neg_cI} and \ref{thm:pos_cI} establish that the open set condition (OSC) holds when the linear part of $g$ acts as a uniform scaling or a point reflection ($S = \pm rI$). We now show that if $S$ is an axial reflection, the OSC may fail even when all other hypotheses ($A^2 = \pm cI$, $c + r \le 1$, and $z_f \neq z_g$) are satisfied. We provide two counterexamples corresponding to the structural cases $A^2 = -cI$ and $A^2 = cI$.

\begin{example}[$A^2 = -cI$ and $S$ is a reflection] \label{ex:osc_fail_neg_cI}
Consider the IFS generated by the following affine transformations:
\begin{align*}
    f(x) &= \begin{pmatrix} 0 & -1/5 \\ 1 & 0 \end{pmatrix} x, \\
    g(x) &= \begin{pmatrix} 4/5 & 0 \\ 0 & -4/5 \end{pmatrix} x + \begin{pmatrix} 1.0 \\ 0.87 \end{pmatrix}.
\end{align*}
Let $A$ and $S$ denote the linear parts of $f$ and $g$, respectively. Here, $A^2 = -cI$ with $c = 1/5$. The matrix $S$ is a reflection across the $x$-axis scaled by $r = 4/5$. The contraction ratios perfectly satisfy the parameter constraint $c + r \le 1$ (as $1/5 + 4/5 = 1$). However, as illustrated in Figure \ref{fig:overlap_reflections}(a), the images $f(K)$ and $g(K)$ overlap, violating the OSC.
\end{example}

\begin{example}[$A^2 = cI$ and $S$ is a reflection] \label{ex:osc_fail_pos_cI}
Consider the IFS generated by:
\begin{align*}
    f(x) &= \begin{pmatrix} 0 & 1/5 \\ 1 & 0 \end{pmatrix} x, \\
    g(x) &= \begin{pmatrix} 4/5 & 0 \\ 0 & -4/5 \end{pmatrix} x + \begin{pmatrix} 1.0 \\ 0.92 \end{pmatrix}.
\end{align*}
Here, $c = 1/5$ and $r = 4/5$, satisfying the constraint $c + r \le 1$, and the fixed points are distinct. Although the system meets the algebraic prerequisites of Theorem \ref{thm:pos_cI}, Figure \ref{fig:overlap_reflections}(b) demonstrates a clear overlap between the images $f(K)$ and $g(K)$, violating the OSC.
\end{example}

\begin{figure}[htpb]
    \centering
    \begin{subfigure}[b]{0.48\textwidth}
        \centering
        \includegraphics[width=\textwidth]{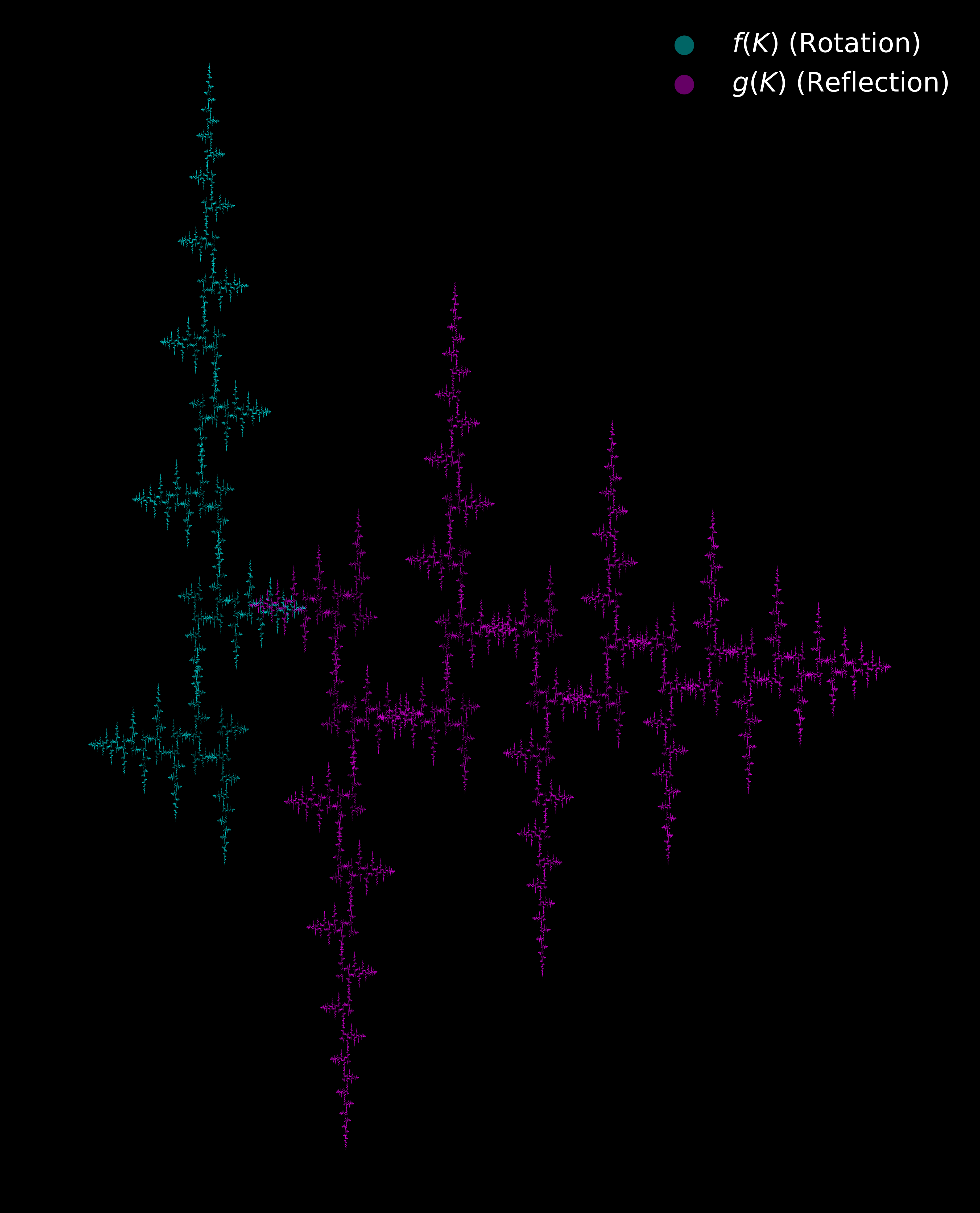}
        \caption{$A^2 = -cI$. Branches $f(K)$ (cyan) and $g(K)$ (magenta) overlap.}
        \label{fig:overlap_rot_ref}
    \end{subfigure}
    \hfill
    \begin{subfigure}[b]{0.48\textwidth}
        \centering
        \includegraphics[width=\textwidth]{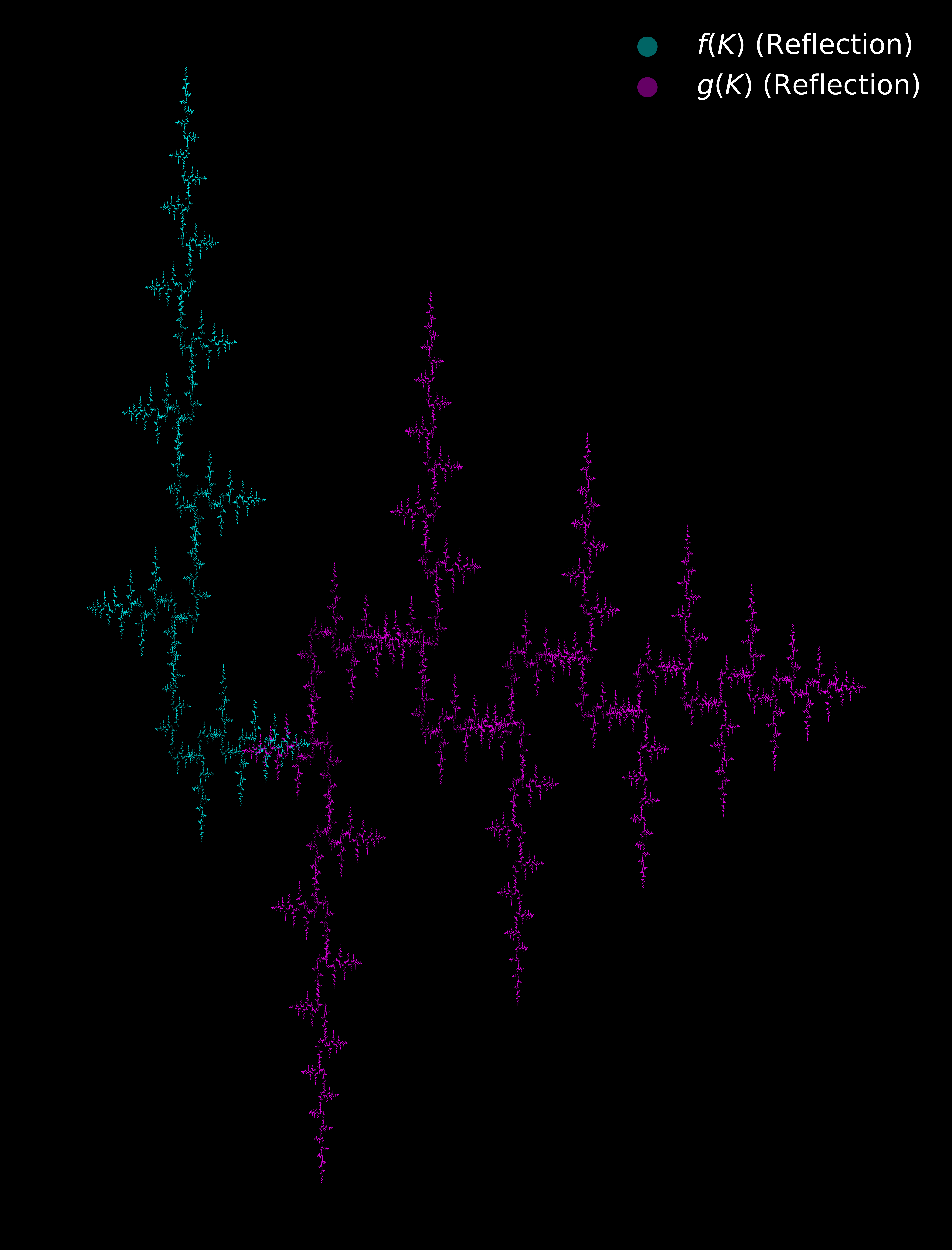}
        \caption{$A^2 = cI$. Orientation-reversing matrices induce an overlap.}
        \label{fig:overlap_ref_ref}
    \end{subfigure}
    \caption{Failure of the Open Set Condition when $S$ is an axial reflection. In both examples, the parameters satisfy $c=1/5$, $r=4/5$, and the constraint $c+r \le 1$, yet the sets $f(K)$ and $g(K)$ overlap.}
    \label{fig:overlap_reflections}
\end{figure}

These examples demonstrate that the open set condition does not generalize to axial reflections, even when all parameter constraints are perfectly met.

\begin{example}[Topological Disconnection for $c + r > 1$ under Reflection] \label{ex:connect_fail}
Consider the IFS consisting of:
\begin{align*}
    f(x) &= \begin{pmatrix} 0 & 2/3 \\ 1 & 0 \end{pmatrix} x, \\
    g(x) &= \begin{pmatrix} 1/2 & 0 \\ 0 & -1/2 \end{pmatrix} x + \begin{pmatrix} 1 \\ 2 \end{pmatrix}.
\end{align*}
The linear part of $f$ is a matrix $A$ satisfying $A^2 = cI$ with $c = 2/3$. The linear part of $g$, denoted $S$, is an axial reflection scaled by $r = 1/2$. Because $A^\top A = \text{diag}(1, 4/9)$ commutes with the diagonal matrix $S$, $g$ is $f$-aligned per Definition \ref{def:f_aligned}.

The sum of the parameters is $c + r = 7/6 > 1$. If $S$ were a uniform scaling or point reflection, Theorem \ref{thm:connectivity} would guarantee a connected attractor. However, the attractor here is totally disconnected (see Figure \ref{fig:connectivity_failure}), confirming that the condition $c + r \ge 1$ fails to guarantee connectedness for axial reflections.

\begin{figure}[htpb]
    \centering
    \includegraphics[width=0.6\textwidth]{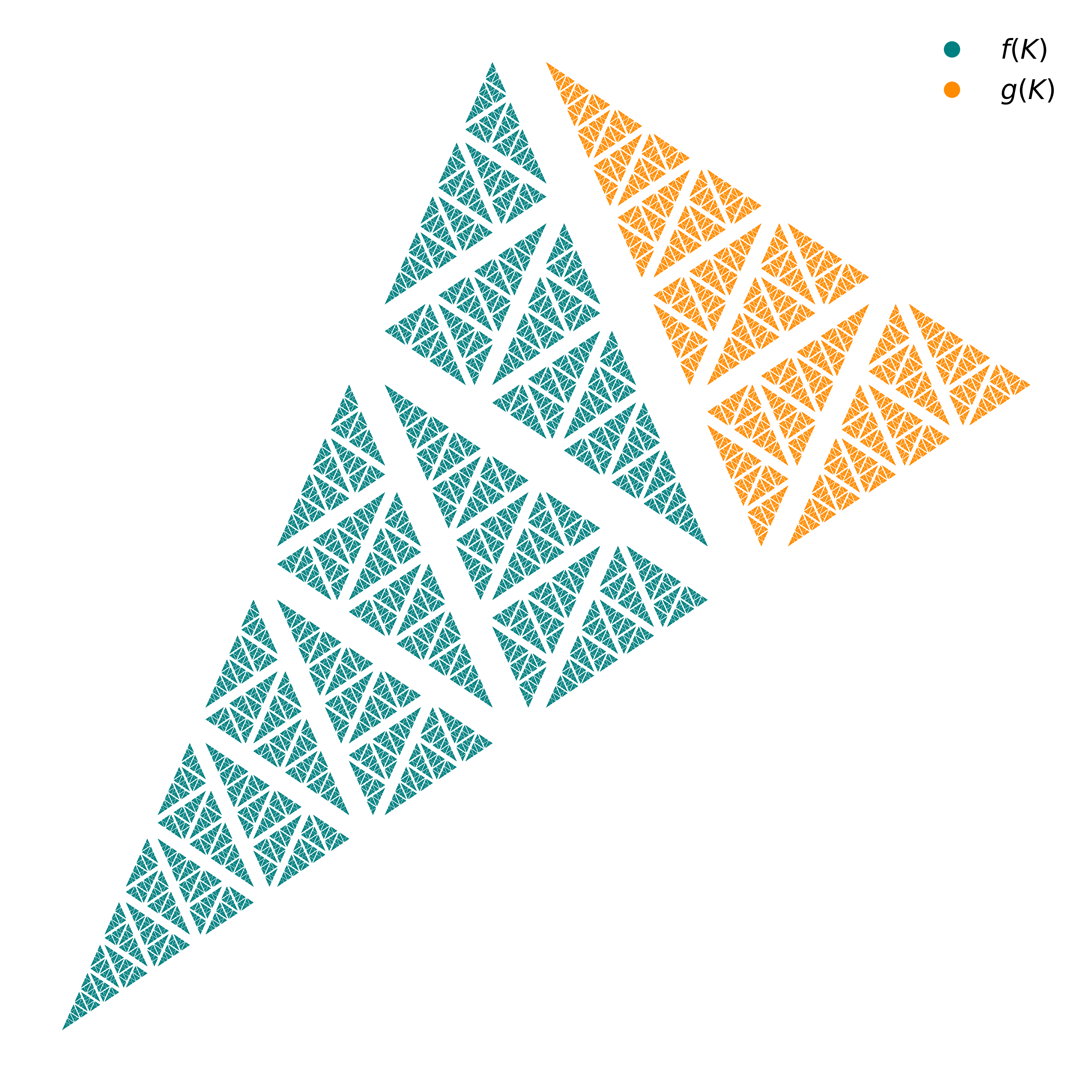} 
    \caption{The attractor of the IFS defined in Example \ref{ex:connect_fail}, illustrating the failure of connectedness under axial reflection. Although the parameters $c = 2/3$ and $r = 1/2$ satisfy $c + r > 1$, the orientation-reversing nature of $S$ results in a totally disconnected set.}
    \label{fig:connectivity_failure}
\end{figure}
\end{example}

\begin{remark}
The preceding theorems reveal a strict topological dichotomy for this family of systems. For an IFS $\mathcal{F} = \{f, g\}$ where $f$ is a strict $G^2$-similarity contraction and $g$ is an $f$-aligned similarity (excluding axial reflections), the open set condition requires $c + r \le 1$, whereas connectedness dictates $c + r \ge 1$. Therefore, the exact parametric equality $c + r = 1$ represents the unique condition under which the system can simultaneously produce a connected attractor and satisfy the open set condition.
\end{remark}

\section*{Acknowledgements}
This research was financially supported by the University Grants Commission (UGC), Government of India, through the award of the Junior Research Fellowship (JRF) (Ref. No. [1090(CSIR-UGC NET JUNE 2019)]) awarded to the first author. The first and third authors acknowledge the institutional support provided by Rajagiri School of Engineering and Technology, Kerala, India and the second author acknowledges the institutional support provided by Muthoot Institute of Technology and Science, Kerala, India.

\bibliographystyle{amsplain}
\bibliography{references}

\end{document}